\font\bigmath=cmsy10 scaled \magstep 2
\newcommand{\bigtimes}{\hbox{\bigmath \char'2}}
\newcommand{\cchi}{\raise 2 pt \hbox{$\chi$}}
\newcommand{\ggamma}{\raise 2 pt \hbox{$\gamma$}}
\newcommand{\vvarphi}{\raise 2 pt \hbox{$\varphi$}}
\newcommand{\cl}{c\ell}
\newcommand{\emp}{\emptyset}
\newcommand{\ber}{\mathbb R}
\newcommand{\ben}{\mathbb N}
\newcommand{\bez}{\mathbb Z}
\newcommand{\nhat}[1]{\{1,2,\ldots,#1\}}
\newcommand{\pf}{{\mathcal P}_f}
\newtheorem{theorem}{Theorem}[section]
\newtheorem{corollary}[theorem]{Corollary}
\newtheorem{lemma}[theorem]{Lemma}
\newtheorem{question}[theorem]{Question}
\theoremstyle{definition}
\newtheorem{definition}[theorem]{Definition}
\title{F\o lner, Banach, and translation density are equal and other new results about density in left
amenable semigroups}
\date{}
\author{Daniel Glasscock
\footnote{Department of Mathematics and Statistics, 
University of Massachusetts Lowell, Southwick Hall 301D,
1 University Ave, Lowell, MA 01854, USA. This author gratefully
acknowledges support from the National Science Foundation under
Grant No.\ DMS-2418589. {\tt daniel\char'137 glasscock@uml.edu}}
\and
Neil Hindman
        \footnote{Department of Mathematics,
                 Howard University,
                  Washington, DC 20059, USA.\hfill\break
                  Corresponding Author. {\tt nhindman@aol.com}}
        \and
Dona Strauss
        \footnote{95 Lowther Rd, Brighton BN16LH, England.\hfill\break
{\tt donastrauss@gmail.com}\hfill\break
{Keywords: Strong F\o lner Condition, F\o lner density, Left invariant means, Banach density\hfill\break
MSC[2020] 20M10, 05C42}}
}
\begin{document}
\maketitle

\begin{abstract}In any semigroup $S$ satisfying the {\it Strong F\o lner Condition\/},
there are three natural notions of density for a subset $A$ of $S$: F\o lner density $d(A)$,
Banach density $d^*(A)$, and translation density $d_t(A)$.
If $S$ is commutative or left cancellative, it is known that these three
notions coincide.  We shall show that these notions coincide for every semigroup $S$ which 
satisfies the Strong F\o lner Condition.  We solve a problem that has
been open for decades, showing that if $S$ is left amenable, the set of ultrafilters every member of which has
positive Banach density is a two sided ideal of $\beta S$. 
We  investigate the density properties of subsets of $S$ in the case in which the minimal left ideals
of $\beta S$ are singletons. This occurs in 
 all semilattices and all semigroups which have a right zero. We show that this is equivalent to the statement that $S$ satisfies 
$SFC$ and that, for every subset $A$ of $S$, $d(A)\in \{0,1\}$.  We also examine the relation between the density properties 
of two semigroups when one is a quotient of the other.  If $S$ satisfies $SFC$,
we show that an arbitrary F\o lner  net in $S$
determines the density of all of the subsets of $S$.  And we prove that, if $S$ and $T$ are left amenable semigroups, then
 $d^*(A\times B)=d^*(A)d^*(B)$ for every subset $A$ of $S$ and every subset $B$ of $T$. \end{abstract}

\section{Three notions of density in semigroups}

We begin by introducing some of the notions that we are concerned with here.
Given a set $X$, we let $\pf(X)$ be the set of finite nonempty subsets
of $X$. If $(S,\cdot)$ is a semigroup and $x \in S$, we denote left and right multiplication by $x$ by $\lambda_x: S \to S$
and $\rho_x: S \to S$, respectively.  For $A\subseteq S$, we define
$x^{-1}A = \lambda_x^{-1}[A]=\{y\in S:xy\in A\}$ and $Ax^{-1} = \rho_x^{-1}[A]=\{y\in S: yx \in A\}$.

In the following subsections, we treat three notions of density in semigroups: Banach density $d^*$, F\o{}lner
density $d$, and translation density $d_t$.

\subsection{Banach density}

Let $(S,\cdot)$ be a semigroup. Let  $l_{\infty}(S)$ be the set of bounded real valued
functions on $S$ with the supremum norm, denoted by $\Vert\hskip 5 pt\Vert_{\infty}$.
Let $l_{\infty}(S)^*$ be the set of continuous real valued linear functionals on $l_\infty(S)$
with the dual norm $|| \mu || = \sup \{ \mu(f) : || f ||_{\infty} \leq 1 \}$.
 A {\it mean\/} on $S$ is an element of $l_{\infty}(S)^*$ such
that $||\mu||=1$ and $\mu \geq 0$, that is, whenever $g\in l_{\infty}(S)$ and for all $s\in S$,
$g(s)\geq 0$, one has that $\mu(g)\geq 0$. A {\it left invariant mean\/} on $S$ is a mean $\mu$ such
that for all $s\in S$ and all $g\in l_{\infty}(S)$, $\mu(g\circ \lambda_s)=\mu(g)$. 
The semigroup $S$ is 
defined to be {\it left amenable\/} if and only if there exists a 
left invariant mean on $S$. 

We denote by $MN(S)$ and $LIM(S)$ the set of means and left invariant means on $S$,
respectively. If $\mu$ is a mean on $S$ and $A \subseteq S$,
it is useful in discussing density to use
$\mu(A)$ to denote $\mu(\cchi_A)$,
where $\cchi_A$ is the characteristic function of $A$, 
because density is a property of sets.

The weak* topology of $l_{\infty}(S)^*$ is defined by stating that a net $\langle\mu_{\alpha}\rangle_{\alpha\in D}$ 
converges to a limit $\mu$ in this space if and only if the net $\langle\mu_{\alpha}(f)\rangle_{\alpha\in D}$ converges to
$\mu(f)$ in $\ber$ for every $f\in l_{\infty}(S)$. The weak* topology is the restriction to $l_{\infty}(S)^*$ of the product topology on $\bigtimes_{g\in l_\infty(S)}\,\ber$.
By the Alaoglu Theorem \cite[Theorem B25]{HR}, the closed unit ball of $l_{\infty}(S)^*$ is compact in the weak* topology.

The following notion of density is defined in any left 
amenable semigroup. Following \cite{BG}, we call it {\it Banach density\/}.

\begin{definition} \label{defBdensity} Let $(S,\cdot)$ be a left amenable semigroup,
and let $A\subseteq S$. The {\it Banach density of $A$\/}
is defined by $d^*(A)=\sup\{\lambda(\cchi_A):\lambda$ is a left invariant
mean on $S\}$.
\end{definition}

In the remainder of this subsection, we collect some preliminary results that will be
key to relating Banach density to other notions of density later on.

\begin{lemma}\label{lemlambdaright} Let $(S,\cdot)$ be a left amenable semigroup,
let $\lambda$ be a left invariant mean on $S$, and let $R$ be a right 
ideal of $S$. Then $\lambda(\cchi_R)=1$.\end{lemma}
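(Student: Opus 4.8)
Let $\lambda$ be a left invariant mean on $S$ and let $R$ be a right ideal, so $Rs \subseteq R$ for every $s \in S$, equivalently $R \subseteq Rs^{-1} = \rho_s^{-1}[R]$.  The idea is to fix a point $a \in R$ and exploit left invariance of $\lambda$ together with the fact that right multiplication keeps us inside $R$.  The key observation is that for any $s \in S$, the set $s^{-1}R = \{y : sy \in R\}$ contains a lot of $S$: in particular, since $R$ is a right ideal, $s \in R$ would give $sy$... no — rather, the useful direction is that $\chi_R \circ \lambda_a$ is the characteristic function of $a^{-1}R$, and $a^{-1}R \supseteq $ ... hmm, that is not automatically all of $S$ either.

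So here is the approach I would actually carry out.  Pick $a \in R$.  Then $aS \subseteq R$ since $R$ is a right ideal, hence $\chi_R \geq \chi_{aS}$, and it suffices to show $\lambda(\chi_{aS}) = 1$, i.e.\ $\lambda(aS) = 1$.  Now $aS = \{as : s \in S\} = \{y : y \in aS\}$, and the point is that $a^{-1}(aS) = \{s : as \in aS\} = S$.  Therefore $\chi_{aS} \circ \lambda_a = \chi_{a^{-1}(aS)} = \chi_S = 1$, the constant function $1$.  By left invariance of $\lambda$, $\lambda(\chi_{aS}) = \lambda(\chi_{aS} \circ \lambda_a) = \lambda(1) = \|\lambda\| = 1$ (using that $\lambda$ is a mean, so $\lambda(1)=1$).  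Since $0 \leq \chi_{aS} \leq \chi_R \leq 1$ pointwise and $\lambda \geq 0$, monotonicity gives $1 = \lambda(\chi_{aS}) \leq \lambda(\chi_R) \leq \lambda(1) = 1$, whence $\lambda(\chi_R) = 1$.

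I expect the only subtlety to be making sure each small step is justified from the definitions given in the excerpt: that $\lambda(\mathbf{1}) = 1$ follows from $\|\lambda\| = 1$ together with $\lambda \geq 0$ (a positive functional of norm one sends the constant function $1$ to $1$); that $\lambda$ is monotone, i.e.\ $f \leq g$ pointwise implies $\lambda(f) \leq \lambda(g)$, which follows from $\lambda \geq 0$ applied to $g - f$; and that $\chi_A \circ \lambda_s = \chi_{s^{-1}A}$, which is immediate from the definition $s^{-1}A = \{y : sy \in A\}$.  None of these is a real obstacle — the argument is essentially a two-line computation once the reduction to $\lambda(aS) = 1$ is made, and that reduction rests on the single clean identity $a^{-1}(aS) = S$, valid in any semigroup without cancellation hypotheses.
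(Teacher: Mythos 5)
Your proof is correct and rests on the same mechanism as the paper's: fix $a\in R$ and apply left invariance of $\lambda$ via composition with $\lambda_a$ to land on the constant function $\cchi_S$. The only difference is that your detour through $aS$ and monotonicity is unnecessary, and your hedge that $a^{-1}R$ ``is not automatically all of $S$'' is unfounded: since $a\in R$ and $R$ is a right ideal, $as\in R$ for every $s\in S$, so $a^{-1}R=S$, i.e.\ $\cchi_R\circ\lambda_a=\cchi_S$, which gives $\lambda(\cchi_R)=\lambda(\cchi_R\circ\lambda_a)=\lambda(\cchi_S)=1$ directly -- exactly the paper's one-line proof.
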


\begin{proof} Pick $a\in R$. Then $\cchi_R\circ\lambda_a=\cchi_S$
so $\lambda(\cchi_R)=\lambda(\cchi_R\circ\lambda_a)=\lambda(\cchi_S)=1$.
\end{proof} 

\begin{lemma}\label{inMNS} Let $(S,\cdot)$ be a semigroup, and let $\mu\in l_{\infty}(S)^*$. 
If $\mu\geq 0$, then $\mu\in MN(S)$ if and only if $\mu(S)=1$. \end{lemma}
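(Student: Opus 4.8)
The plan is to unwind the definition of a mean and use the order structure of $l_\infty(S)^*$. Recall that $\mu\in MN(S)$ means precisely that $\mu\ge 0$ and $\|\mu\|=1$. Since positivity is assumed on both sides of the biconditional, the real content is the equivalence of $\|\mu\|=1$ with $\mu(\cchi_S)=1$ for a given positive functional $\mu$. The key observation is that $\cchi_S$ is the constant function with value $1$, so $\|\cchi_S\|_\infty=1$ and hence $\mu(\cchi_S)\le\|\mu\|$ always holds; moreover, for any $f\in l_\infty(S)$ with $\|f\|_\infty\le 1$, both $\cchi_S-f$ and $\cchi_S+f$ are nonnegative functions on $S$.

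For the forward direction, assume $\mu\in MN(S)$, so that $\|\mu\|=1$. Given $f$ with $\|f\|_\infty\le 1$, applying $\mu\ge 0$ to $\cchi_S-f$ and to $\cchi_S+f$ yields $\mu(\cchi_S)\ge\mu(f)$ and $\mu(\cchi_S)\ge-\mu(f)$, hence $\mu(\cchi_S)\ge|\mu(f)|$. Taking the supremum over all such $f$ gives $\mu(\cchi_S)\ge\|\mu\|=1$, and combined with $\mu(\cchi_S)\le\|\mu\|=1$ we conclude $\mu(\cchi_S)=1$.

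For the converse, assume $\mu\ge 0$ and $\mu(\cchi_S)=1$. The same nonnegativity of $\cchi_S\pm f$ for $\|f\|_\infty\le 1$ gives $|\mu(f)|\le\mu(\cchi_S)=1$, so $\|\mu\|\le 1$; testing against $\cchi_S$ itself (which has norm $1$) gives $\|\mu\|\ge\mu(\cchi_S)=1$. Hence $\|\mu\|=1$, and together with the assumed positivity this shows $\mu\in MN(S)$. There is no serious obstacle here; the only point requiring care is that positivity of $\mu$ is exactly what licenses the comparison of $\mu(f)$ with $\mu(\cchi_S)$, which is why the hypothesis $\mu\ge 0$ is essential to the statement.
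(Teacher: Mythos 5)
Your proof is correct and follows essentially the same route as the paper, which simply asserts that $\cchi_S$ is the maximum element of the unit ball so that $\|\mu\|=\mu(\cchi_S)$ for any positive $\mu$; your argument via the nonnegativity of $\cchi_S\pm f$ just fills in the details of that one-line observation.
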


\begin{proof} Since $\cchi_S$ is the maximum element of the unit ball of $l_{\infty}(S)$, 
for any  $\mu\geq 0$ in $l_{\infty}(S)^*$,  $||\mu||=\mu(S)$. Thus $||\mu||=1$ if and only if $\mu(S)=1$.   \end{proof}

In the following lemmas, we make use of the linear subspace $E$ of $l_{\infty}(S)$ generated
by the functions of the form $\cchi_A$, where $A$ denotes a subset of $S$.  Note that this is
precisely the subset of $l_{\infty}(S)$ consisting of those functions with finite range.

\begin{lemma}\label{Edense} Let $(S,\cdot)$ be a semigroup, and let $E$ denote the linear subspace
of $l_{\infty}(S)$ generated by the functions of the form $\cchi_A$, where $A$ denotes a subset of $S$.
Then $E$ is uniformly dense in $l_{\infty}(S)$. \end{lemma}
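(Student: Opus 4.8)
The plan is to show that every bounded function on $S$ can be uniformly approximated, to within any prescribed $\varepsilon>0$, by a function with finite range, since those are exactly the elements of $E$: a finite linear combination $\sum_i c_i\cchi_{A_i}$ has finite range, and conversely a function $f$ taking only the finitely many values $v_1,\ldots,v_k$ equals $\sum_j v_j\cchi_{f^{-1}[\{v_j\}]}\in E$. This is just the classical approximation of bounded functions by simple functions, but in the discrete setting no measurability hypothesis is needed, so one can take preimages of arbitrary subintervals freely.

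Concretely, given $f\in l_{\infty}(S)$ and $\varepsilon>0$, I would first dispose of the trivial case $f=0$. Otherwise, set $M=\Vert f\Vert_{\infty}$ and choose $n\in\ben$ with $2M/n<\varepsilon$. Slice $[-M,M]$ into the $n$ consecutive subintervals $I_i=[-M+2Mi/n,\,-M+2M(i+1)/n)$ for $i\in\{0,1,\ldots,n-2\}$ together with the closed last piece $I_{n-1}=[-M+2M(n-1)/n,\,M]$, so that $\{I_0,\ldots,I_{n-1}\}$ partitions $[-M,M]$ with each $I_i$ of length at most $2M/n$. Put $A_i=f^{-1}[I_i]$; since the $I_i$ partition $[-M,M]\supseteq f[S]$, the sets $A_0,\ldots,A_{n-1}$ partition $S$. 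Define $g=\sum_{i=0}^{n-1}(-M+2Mi/n)\cchi_{A_i}$, which lies in $E$.

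Finally I would verify the estimate: each $s\in S$ belongs to exactly one $A_i$, and then both $f(s)$ and $g(s)=-M+2Mi/n$ lie in $I_i$, so $|f(s)-g(s)|\le 2M/n<\varepsilon$; taking the supremum over $s\in S$ gives $\Vert f-g\Vert_{\infty}\le 2M/n<\varepsilon$, which is the claimed density. There is no substantive obstacle here; the only points that need a moment's care are making the final slice $I_{n-1}$ closed so that the $A_i$ actually cover $S$, and pulling out the degenerate case $f\equiv 0$ so that $M>0$ and the quantities $2Mi/n$ are well defined.
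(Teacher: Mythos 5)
Your proof is correct: the range-slicing argument (partition $[-M,M]$ into finitely many intervals of length below $\varepsilon$, pull back to a partition of $S$, and replace $f$ by the corresponding step function) gives exactly the uniform approximation required, and your care about making the last interval closed and about the degenerate case $f\equiv 0$ covers the only fussy points. The paper does not write this argument out; it simply asserts that an elementary direct proof exists and instead sketches an alternative via the Stone--Weierstrass Theorem, viewing $\{\widetilde f : f \in E\}$ as a subalgebra of $C(\beta S)$ that contains the constants and separates points, so that its uniform closure is all of $C(\beta S) \cong l_\infty(S)$. Your route is the one the paper leaves to the reader, and it has the advantage of being completely self-contained (no compactification, no Stone--Weierstrass), whereas the paper's explicit route is shorter on the page but leans on the identification of $l_\infty(S)$ with $C(\beta S)$ and the separation of points by characteristic functions of subsets of $S$. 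Either way the conclusion is the same, and your observation that $E$ is precisely the set of finite-range functions matches the remark preceding the lemma in the paper.
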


\begin{proof} This is easy to prove directly
by an elementary argument.
It also follows from the Stone-Weierstrass Theorem, because 
$\{\widetilde f:f\in E\}$ is a subalgebra of $C(\beta S)$ which 
separates points and contains the constant function $\cchi_{\beta S}$. 
(Here $\widetilde f:\beta S\to\ber$ is the continuous extension of $f$.)\end{proof}

\begin{lemma}\label{content} Let $(S,\cdot)$ be a semigroup, and 
let $\mu: \mathcal{P}(S) \to \ber$ be a non-negative, finitely additive set function such that $\mu(S)=1$.
Then $\mu$ extends uniquely to a mean on $S$. \end{lemma}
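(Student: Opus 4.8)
The plan is to first build $\mu$ as a linear functional on the subspace $E$ of $l_\infty(S)$ of finite-valued functions described just before Lemma \ref{Edense}, and then to extend it by uniform density. Every $f\in E$ can be written as $f=\sum_{k=1}^n c_k\cchi_{C_k}$, where $c_1,\ldots,c_n$ are the distinct values of $f$ and $C_k=f^{-1}[\{c_k\}]$, so that $\{C_1,\ldots,C_n\}$ is a finite partition of $S$; define $\widetilde\mu(f)=\sum_{k=1}^n c_k\mu(C_k)$. The first task is to check that this is well defined and linear: given two such representations coming from finite partitions $\mathcal P$ and $\mathcal Q$ of $S$, pass to the common refinement $\{P\cap Q:P\in\mathcal P,\ Q\in\mathcal Q\}$ and use finite additivity of $\mu$ (so in particular $\mu(\emp)=0$) to see that both formulas compute the same real number; linearity follows from the same refinement trick applied simultaneously to $f$ and $g$. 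By construction $\widetilde\mu(\cchi_A)=\mu(A)$ for every $A\subseteq S$, and $\widetilde\mu(\cchi_S)=\mu(S)=1$.

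Next I would record that $\widetilde\mu$ is positive on $E$: if $g\in E$ and $g(s)\geq 0$ for all $s$, then in the canonical representation each $c_k\geq 0$ and each $\mu(C_k)\geq 0$, so $\widetilde\mu(g)\geq 0$. Positivity gives monotonicity, and since $-\cchi_S\leq f\leq\cchi_S$ for every $f$ in the unit ball of $E$, we get $|\widetilde\mu(f)|\leq\widetilde\mu(\cchi_S)=1$; combined with $\widetilde\mu(\cchi_S)=1$ this shows that $\widetilde\mu$ is a bounded linear functional on $E$ of norm exactly $1$. By Lemma \ref{Edense}, $E$ is uniformly dense in $l_\infty(S)$, so $\widetilde\mu$ extends uniquely to a continuous linear functional $\mu$ on $l_\infty(S)$ of norm $1$, by the standard extension of a bounded linear functional off a dense subspace.

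It remains to check that this extension is a mean and is the only mean extending the given set function. For positivity of the extension, let $g\in l_\infty(S)$ with $g\geq 0$; choosing simple functions $g_j\in E$ with $0\leq g_j$ and $\Vert g-g_j\Vert_\infty\to 0$ (for instance, rounding the values of $g$ down to an increasingly fine finite grid of nonnegative reals) gives $\mu(g)=\lim_j\widetilde\mu(g_j)\geq 0$. Thus $\mu\geq 0$ and $\mu(S)=1$, so $\mu\in MN(S)$ by Lemma \ref{inMNS}, and $\mu(\cchi_A)=\mu(A)$ for all $A\subseteq S$, i.e.\ $\mu$ extends the given set function. Finally, any mean $\nu$ on $S$ with $\nu(\cchi_A)=\mu(A)$ for every $A\subseteq S$ agrees with $\widetilde\mu$ on $E$ by linearity, hence with $\mu$ on all of $l_\infty(S)$ by continuity and density, which yields uniqueness.

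The only mildly delicate point is the well-definedness and linearity of $\widetilde\mu$ on $E$, which is the usual common-refinement computation for simple functions; everything after that is the routine task of extending a bounded functional off a dense subspace and checking that the relevant inequalities survive the extension, so I do not expect any genuine obstacle.
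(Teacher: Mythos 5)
Your proposal is correct and follows essentially the same route as the paper: define the functional on the dense subspace $E$ of finite-range functions, verify linearity and boundedness (the paper phrases boundedness as uniform continuity via the estimate $|\nu(f)|\leq a$ when Range$(f)\subseteq[-a,a]$), extend by density, and get uniqueness from agreement on $E$. Your treatment of positivity of the extension by approximating a nonnegative function with nonnegative simple functions is in fact slightly more explicit than the paper's ``Clearly $\nu\geq 0$,'' but it is the same argument in substance.
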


\begin{proof}
Denote by $E$ the subset of $l_\infty(S)$ consisting of those functions with finite range.  By Lemma
\ref{Edense}, the set $E$ is uniformly dense in $l_\infty(S)$.
    
We define a function $\nu: E \to \ber$ as follows: for $f \in E$,
\[\nu(f) = \sum_{x \in \textrm{\footnotesize Range}(f)} x \mu \big(f^{-1}[\{x\}] \big).\]
Note that $\nu$ extends $\mu$ in the sense that for all $A \subseteq S$, 
$\nu(\cchi_A) = \mu(A)$. By the finite additivity of $\mu$, it is easy to 
see that for all $n \in \ben$, all $c_1, \ldots, c_n \in \ber$, and all 
pairwise disjoint $A_1, \ldots, A_n \subseteq S$, $\nu \left( \sum_{i=1}^n c_i \cchi_{A_i}\right) 
= \sum_{i=1}^n c_i \mu(A_i)$.  Now we note that

\hbox to \hsize{(*)\hfill if $f\in E$, $a>0$, and Range$(f)\subseteq[-a,a]$, then $|\nu(f)|\leq a$.\hfill}

\noindent To see this, let such $f$ and $a$ be given. Note that since 
$\{f^{-1}[\{x\}]:x\in \hbox{Range}(f)\}$ is a set of pairwise disjoint sets and
$\mu(S)=1$, $\sum_{x \in \textrm{\footnotesize Range}(f)} \mu\big(f^{-1}[\{x\}] \big)\leq 1$.
Then $|\nu(f)|\leq \sum_{x \in \textrm{\footnotesize Range}(f)} |x| \mu \big(f^{-1}[\{x\}] \big)
\leq a \sum_{x \in \textrm{\footnotesize Range}(f)} \mu \big(f^{-1}[\{x\}] \big)\leq a$, as required.

We claim that for all $f, g \in E$ and $c \in \ber$, $\nu(cf) = c\nu(f)$ and $\nu (f + g) = \nu (f) + \nu (g)$.  Indeed, the first is immediate. To see the second, suppose $\textrm{Range}(f) = \{c_1, \ldots, c_n\}$ and $\textrm{Range}(g) = \{d_1, \ldots, d_m\}$.  Let $A_{i,j} = \{s \in S \ | \ f(s) = c_i \textrm{ and } g(s) = d_j\}$.  Note that $f = \sum_{i=1}^n \sum_{j=1}^m c_i \cchi_{A_{i,j}}$, $g = \sum_{i=1}^n \sum_{j=1}^m d_j \cchi_{A_{i,j}}$, and that the $A_{i,j}$'s are pairwise disjoint.  It follows that
$$ \sum_{i=1}^n \sum_{j=1}^m (c_i + d_j) \mu(A_{i,j}) = \sum_{i=1}^n \sum_{j=1}^m c_i \mu(A_{i,j}) + \sum_{i=1}^n \sum_{j=1}^m d_j \mu(A_{i,j}).$$
The left hand side is $\nu(f+g)$ and the right hand side is $\nu(f) + \nu(g)$, whence $\nu(f+g) = \nu(f) + \nu(g)$.

We claim that $\nu: E \to \ber$ is uniformly continuous. 
Indeed, suppose $f, g \in E$ are such that $\|f - g\|_\infty < \epsilon$.  
By the definition of $|| \cdot ||_{\infty}$, the range of $f-g$ is contained 
in the interval $[-\epsilon,\epsilon]$. Since $\mu(S) = 1$, we see from
(*) that $|\nu(f) - \nu(g)| = |\nu(f-g)| < \epsilon$.

Because $\nu$ is uniformly continuous and $E$ is dense, the function $\nu$ extends uniquely to a uniformly continuous function $\nu: l_\infty(S) \to \ber$. Clearly $\nu(\cchi_S) = 1$ and $\nu \geq 0$.  For $c \in \ber$, the functions $f \mapsto \nu(cf)$ and $f \mapsto c \nu(f)$ are continuous and agree on $E$, hence are equal.  Similarly, the functions $(f,g) \mapsto \nu(f+g)$ and $(f,g) \mapsto \nu(f) + \nu(g)$ are continuous and agree on $E$, hence are equal.  Therefore, $\nu$ is linear, and hence is a mean on $S$.

It is easy to see that any mean extending $\mu$ must be equal to $\nu$ when restricted to the set $E$.  It follows by the work above, then, that $\nu$ is the unique mean on $S$ extending $\mu$.
\end{proof}

We remind the reader that, if $s\in S$ and $A\subseteq S$, $s^{-1}A$ denotes $\{t\in S:st\in A\}$.

\begin{lemma}\label{s^{-1}A} Let $(S,\cdot)$ be a semigroup, and let $\mu\in MN(S)$. Then $\mu\in LIM(S)$ if and only if
$\mu(s^{-1}A)=\mu(A)$ for every $s\in S$ and every $A\subseteq S$.
\end{lemma}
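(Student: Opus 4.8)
The plan is to dispatch the forward direction by a direct computation with characteristic functions and the reverse direction by a density argument, reducing everything to the already-established behavior of $\mu$ on the space $E$ of finite-range functions.

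For the forward direction, suppose $\mu\in LIM(S)$, let $s\in S$, and let $A\subseteq S$. The key observation is that $\cchi_A\circ\lambda_s=\cchi_{s^{-1}A}$: for $t\in S$ one has $(\cchi_A\circ\lambda_s)(t)=\cchi_A(st)$, which equals $1$ exactly when $st\in A$, i.e.\ exactly when $t\in s^{-1}A$. Left invariance then gives $\mu(s^{-1}A)=\mu(\cchi_A\circ\lambda_s)=\mu(\cchi_A)=\mu(A)$, as desired.

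For the reverse direction, assume $\mu(s^{-1}A)=\mu(A)$ for every $s\in S$ and every $A\subseteq S$; I must show $\mu(g\circ\lambda_s)=\mu(g)$ for every $g\in l_\infty(S)$ and $s\in S$. First I would handle $g\in E$. Writing such a $g$ as $\sum_{i=1}^n c_i\cchi_{A_i}$ with the $A_i$ pairwise disjoint, I note that $g\circ\lambda_s=\sum_{i=1}^n c_i\cchi_{s^{-1}A_i}$ and that the sets $s^{-1}A_1,\ldots,s^{-1}A_n$ are again pairwise disjoint (if $st\in A_i\cap A_j$ then $i=j$). Since $\mu$ is linear on $E$ and $\mu(\cchi_{A_i})=\mu(A_i)$, linearity together with the hypothesis yields $\mu(g\circ\lambda_s)=\sum_{i=1}^n c_i\mu(s^{-1}A_i)=\sum_{i=1}^n c_i\mu(A_i)=\mu(g)$. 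To pass from $E$ to all of $l_\infty(S)$, fix $s\in S$ and consider the two linear functionals $g\mapsto\mu(g\circ\lambda_s)$ and $g\mapsto\mu(g)$ on $l_\infty(S)$. The composition operator $g\mapsto g\circ\lambda_s$ is norm-nonincreasing, since $\|g\circ\lambda_s\|_\infty=\sup_{t\in S}|g(st)|\leq\|g\|_\infty$, so $g\mapsto\mu(g\circ\lambda_s)$ is continuous; and $\mu$ itself is continuous. By Lemma \ref{Edense}, $E$ is uniformly dense in $l_\infty(S)$, and the previous computation shows the two functionals agree on $E$. Hence they agree everywhere, which is exactly the statement that $\mu\in LIM(S)$.

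There is no serious obstacle here. The only points requiring a little care are verifying that $s^{-1}(\cdot)$ preserves disjointness, so that the finite additivity encoded in the linearity of $\mu$ on $E$ can be applied, and noting the norm bound on the composition operator, so that the density argument closes cleanly.
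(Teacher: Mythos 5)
Your proof is correct and follows essentially the same route as the paper: the forward direction via the identity $\cchi_{s^{-1}A}=\cchi_A\circ\lambda_s$, and the converse by showing that the norm-continuous functional $g\mapsto\mu(g\circ\lambda_s)$ agrees with $\mu$ on the dense subspace $E$ of finite-range functions (the paper packages this same functional as $\tau_s(\mu)$ for a bounded linear map $\tau_s$ on $l_\infty(S)^*$). One cosmetic remark: the disjointness of the $A_i$ is not actually needed, since $\mu$ is linear on all of $l_\infty(S)$ and agreement on characteristic functions passes to their linear span automatically.
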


\begin{proof}
Observe that $\cchi_{s^{-1}A}=\cchi_A\circ\lambda_s$ for every $s\in S$ and every $A\subseteq S$. So, if $\mu\in LIM(S)$,
$\mu(s^{-1}A)=\mu(A)$ for every $s\in S$ and every $A\subseteq S$.

To prove the converse, assume that $\mu$ is  a mean on $S$ with the property that $\mu(s^{-1}A)=\mu(A)$ for every $s\in S$
and every $A\subseteq S$. 

For $s\in S$, define $\tau_s:l_{\infty}(S)^*\to l_{\infty}(S)^*$ by, for $f\in l_{\infty}(S)$,
$\tau_s(\eta)(f)=\eta(f\circ\lambda_s)$. Note that $\tau_s$ is a linear map. We
claim that $\tau_s$ is continuous for the norm topology on $l_{\infty}(S)^*$. 
By \cite[Theorem B.10]{HR}, it suffices to show that $\tau_s$ is bounded. So let
$\eta\in l_{\infty}(S)^*$. We claim that $||\tau_s(\eta)|| \leq ||\eta||$, for which it suffices 
to let $f\in l_{\infty}(S)$ and note that $||f\circ\lambda_s||_{\infty} \leq ||f||_{\infty}$, whereby
$|\tau_s(\eta)(f)|=|\eta(f\circ\lambda_s)| \leq |\eta(f)|$.

Let $E$ denote the linear subspace of $l_{\infty}(S)$ generated by the functions  
$\cchi_A$, where $A$ denotes a subset of $S$, which is uniformly 
dense in $l_{\infty}(S)$ by Lemma \ref{Edense}.
Given $s\in S$ and $A\subseteq S$, we have noted that $\cchi_{s^{-1}A}=\cchi_A\circ\lambda_s$,
so that $\tau_s(\mu)(\cchi_A)=\mu(\cchi_A\circ\lambda_s)=\mu(\cchi_{s^{-1}A})=\mu(\cchi_A)$.
This implies that $\tau_s(\mu)(f)=\mu(f)$ for every $f\in E$. Since $E$ is dense in 
$l_\infty(S)$ and $\tau_s$ is continuous, this implies that $\mu(f\circ\lambda_s)=\mu(f)$ for every 
$f\in l_\infty(S)$, so that $\mu\in LIM(S)$.\end{proof}

\subsection{F\o{}lner density}

The following conditions provide a way to understand left amenability in terms of sets and their images under left multiplication.

\begin{definition}\label{defSFC} Let $(S,\cdot)$ be a semigroup. 
\begin{itemize}
\item[(a)] The semigroup
$S$ satisfies the {\it F\o lner Condition\/} (FC) if and only 
if\hfill\break 
$\big(\forall H\in\pf(S)\big)(\forall \epsilon>0)\big(\exists K\in\pf(S)\big)
(\forall s\in H)(|sK\setminus K|<\epsilon\cdot |K|)$.
\item[(b)] The semigroup $S$ satisfies the {\it Strong F\o lner Condition\/} (SFC) if and only 
if $\big(\forall H\in\pf(S)\big)(\forall \epsilon>0)\big(\exists K\in\pf(S)\big)
(\forall s\in H)(|K\setminus sK|<\epsilon\cdot |K|)$.
\end{itemize}
\end{definition}

The Strong F\o{}lner Condition leads to a natural notion of density.

\begin{definition}\label{defdensity} Let $(S,\cdot)$ be a semigroup
which satisfies SFC, and let $A\subseteq S$. The {\it F\o lner density of $A$\/}
is defined by\hfill\break 
$d(A)=\sup\{\alpha\in[0,1]:\big(\forall H\in\pf(S)\big)
(\forall\epsilon>0)\big(\exists K\in\pf(S)\big)\hfill\break
\big((\forall s\in H)(|K\setminus sK|<\epsilon\cdot|K|)
\hbox{ and }|A\cap K|\geq \alpha\cdot|K|\big)\}$.
\end{definition}

The reader is referred to \cite[Section 4.22]{P} for a readable discussion of
the relationship among FC, SFC, and left amenability and relevant references. 
In particular, any left amenable semigroup satisfies FC and any semigroup 
satisfying SFC is left amenable. By \cite[Theorem 4]{AW}, any commutative semigroup
satisfies SFC and by (the left-right switch of) \cite[Corollary 3.6]{N}, any
left amenble and left cancellative semigroup satisfies SFC.

\begin{definition}\label{defFolnernet} Let $(S,\cdot)$ be a semigroup.
A {\it F\o lner net\/} in $\pf(S)$ is a net $\langle F_\alpha\rangle_{\alpha\in D}$ such that
for each $s\in S$, $\displaystyle\lim_{\alpha\in D}\frac{|F_\alpha\setminus sF_\alpha|}{|F_\alpha|} =0$.
\end{definition}

Of course, a {\it F\o lner sequence\/} is a F\o lner net in which the relevant directed
set is the set $\ben$ of positive integers, with its usual order.  It is immediate that if there exists
a F\o lner net in $\pf(S)$, then $S$ satisfies SFC. 
It is a consequence of Theorem \ref{thmneteq} below that the converse holds.

\begin{definition}\label{defmuF} Let $(S,\cdot)$ be a semigroup.
\begin{itemize}
\item[(1)] For $F\in\pf(S)$, $\mu_F\in l_\infty(S)^*$ is defined by, 
for $g\in l_\infty(S)$, $\mu_F(g)=\frac{1}{|F|}\sum_{t\in F}g(t)$.
\item[(2)] $LIM_0(S)=\{\nu:$ there exists a F\o lner net $\langle F_\alpha\rangle_{\alpha\in D}$ in
$\pf(S)$ such that $\nu$ is a cluster point of the net 
$\langle \mu_{F_\alpha}\rangle_{\alpha\in D}$ in the weak* topology of $l_{\infty}(S)^*$\}.
\end{itemize}
\end{definition}

As the notation suggests, elements of $LIM_0(S)$ are, in fact, left invariant means.  This is recorded
in the following lemma.

\begin{lemma}\label{lemLIM0LIM} Let $(S,\cdot)$ satisfy SFC. Then $LIM_0(S)\subseteq LIM(S)$. 
\end{lemma}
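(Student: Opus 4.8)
The plan is to show that any element $\nu \in LIM_0(S)$ is first a mean, and then left invariant. For the first part, suppose $\nu$ is a cluster point of $\langle \mu_{F_\alpha}\rangle_{\alpha \in D}$ for some F\o lner net $\langle F_\alpha\rangle_{\alpha \in D}$. Each $\mu_{F_\alpha}$ is manifestly a mean: it is positive (an average of point evaluations), and $\mu_{F_\alpha}(\cchi_S) = 1$, so $\|\mu_{F_\alpha}\| = 1$ by Lemma \ref{inMNS}. The set of means $MN(S)$ is closed in the weak* topology --- positivity is a closed condition since $\{\mu : \mu(g) \geq 0\}$ is weak*-closed for each fixed $g \geq 0$, and $\{\mu : \mu(\cchi_S) = 1\}$ is weak*-closed --- so the cluster point $\nu$ lies in $MN(S)$.

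For the second part, I would invoke Lemma \ref{s^{-1}A}: it suffices to check that $\nu(s^{-1}A) = \nu(A)$ for every $s \in S$ and every $A \subseteq S$, equivalently $\nu(\cchi_A \circ \lambda_s) = \nu(\cchi_A)$. Fix $s$ and $A$. The key computation is the standard F\o lner estimate: for each $\alpha$,
\[
\bigl|\mu_{F_\alpha}(\cchi_A \circ \lambda_s) - \mu_{F_\alpha}(\cchi_A)\bigr|
= \frac{1}{|F_\alpha|}\Bigl|\sum_{t \in F_\alpha}\bigl(\cchi_A(st) - \cchi_A(t)\bigr)\Bigr|
\leq \frac{|\,s F_\alpha \,\triangle\, F_\alpha\,|}{|F_\alpha|},
\]
since $\cchi_A(st) - \cchi_A(t)$ is nonzero only when $t$ lies in the symmetric difference of $F_\alpha$ and $s^{-1}F_\alpha$ --- more carefully, summing $\cchi_A(st)$ over $t \in F_\alpha$ counts elements of $A \cap sF_\alpha$ with multiplicity at most $|F_\alpha|/|sF_\alpha|\le 1$ is not quite right in the non-injective case, so I would instead bound directly: $\sum_{t\in F_\alpha}\cchi_A(st) = \sum_{u \in sF_\alpha}|\{t \in F_\alpha : st = u\}|\,\cchi_A(u)$, and compare this to $\sum_{t\in F_\alpha}\cchi_A(t)=\sum_{u\in F_\alpha}\cchi_A(u)$. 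Because $\langle F_\alpha\rangle$ is a F\o lner net, $|F_\alpha \setminus sF_\alpha|/|F_\alpha| \to 0$, and since $|sF_\alpha| \leq |F_\alpha|$ one also gets $|sF_\alpha \setminus F_\alpha|/|F_\alpha| \le |F_\alpha\setminus sF_\alpha|/|F_\alpha| + (|F_\alpha|-|sF_\alpha|)/|F_\alpha|$, which likewise tends to $0$; hence the whole difference tends to $0$ along $D$.

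Now, since $\nu$ is a cluster point, there is a subnet $\langle \mu_{F_{\alpha(\beta)}}\rangle_{\beta}$ converging to $\nu$ in the weak* topology; evaluating at the two functions $\cchi_A \circ \lambda_s$ and $\cchi_A$ gives $\mu_{F_{\alpha(\beta)}}(\cchi_A\circ\lambda_s) \to \nu(\cchi_A\circ\lambda_s)$ and $\mu_{F_{\alpha(\beta)}}(\cchi_A) \to \nu(\cchi_A)$, while the displayed difference tends to $0$ along the subnet as well. Therefore $\nu(\cchi_A \circ \lambda_s) = \nu(\cchi_A)$, and Lemma \ref{s^{-1}A} yields $\nu \in LIM(S)$.

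The main obstacle is purely bookkeeping rather than conceptual: getting the F\o lner counting estimate right when $\lambda_s$ fails to be injective, so that $|sF_\alpha|$ may be strictly less than $|F_\alpha|$ and one cannot naively pair up terms. The clean way around this is to phrase the estimate in terms of the multiset $sF_\alpha$ (or equivalently to note $\sum_{t\in F_\alpha}\cchi_A(st)$ lies between $|A\cap sF_\alpha|$ and $|A \cap sF_\alpha| + (|F_\alpha| - |sF_\alpha|)$) and then absorb the defect $|F_\alpha| - |sF_\alpha| \le |F_\alpha \setminus sF_\alpha|$ into the F\o lner bound. Everything else --- weak*-closedness of $MN(S)$, passing to a subnet, evaluating coordinatewise --- is routine.
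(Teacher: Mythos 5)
Your proposal is correct. The paper itself does not argue this lemma in-line: it simply cites \cite[Lemma 2.2]{HSc}, so you have in effect reconstructed the proof of the cited result. Your two-step structure is the standard one, and it works: cluster points of the net $\langle\mu_{F_\alpha}\rangle_{\alpha\in D}$ inherit positivity and the value $1$ at $\cchi_S$ (both weak*-closed conditions), so they are means by Lemma \ref{inMNS}; and by Lemma \ref{s^{-1}A} left invariance reduces to $\nu(\cchi_A\circ\lambda_s)=\nu(\cchi_A)$, which you obtain by evaluating along a convergent subnet. The one genuinely delicate point is exactly the one you flag: since $\lambda_s$ need not be injective, the naive bound by $|sF_\alpha\,\triangle\,F_\alpha|/|F_\alpha|$ does not follow by pairing terms, and your fix is the right one. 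Writing $\sum_{t\in F_\alpha}\cchi_A(st)=\sum_{u\in sF_\alpha}|\{t\in F_\alpha:st=u\}|\cchi_A(u)$ gives $|A\cap sF_\alpha|\le\sum_{t\in F_\alpha}\cchi_A(st)\le |A\cap sF_\alpha|+(|F_\alpha|-|sF_\alpha|)$, and the defects are all absorbed by the single Følner quantity: $|F_\alpha|-|sF_\alpha|\le|F_\alpha\setminus sF_\alpha|$ and $|sF_\alpha\setminus F_\alpha|\le|F_\alpha\setminus sF_\alpha|$ (the latter because $|sF_\alpha|\le|F_\alpha|$), so $\bigl|\mu_{F_\alpha}(\cchi_A\circ\lambda_s)-\mu_{F_\alpha}(\cchi_A)\bigr|$ is bounded by a constant multiple of $|F_\alpha\setminus sF_\alpha|/|F_\alpha|\to 0$. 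It would tighten the writeup to state these two defect inequalities explicitly where the estimate is made (rather than in the closing remarks), but the argument as outlined is sound and is precisely why SFC, without any cancellativity hypothesis, suffices.
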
 

\begin{proof} This is shown in \cite[Lemma 2.2]{HSc}.\end{proof}

\begin{theorem}\label {weakstarcompact} Let $(S,\cdot)$ be a semigroup satisfying SFC.
Then $LIM(S)$ is convex, and $LIM(S)$ and $LIM_0(S)$ are both compact  in the weak* topology of
$l_{\infty}(S)^*$. \end{theorem}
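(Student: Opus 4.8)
The plan is to prove the three assertions in the order: convexity of $LIM(S)$, compactness of $LIM(S)$, compactness of $LIM_0(S)$. The first is easy and should be dispatched first. If $\mu,\nu\in LIM(S)$ and $t\in[0,1]$, then $t\mu+(1-t)\nu$ is a positive linear functional with $\big(t\mu+(1-t)\nu\big)(\cchi_S)=1$, so by Lemma~\ref{inMNS} it lies in $MN(S)$; and left invariance is preserved under linear combinations since $(g\mapsto g\circ\lambda_s)$ is linear, so $t\mu+(1-t)\nu\in LIM(S)$.

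For compactness of $LIM(S)$, I would show it is a weak* closed subset of the closed unit ball of $l_\infty(S)^*$, which is weak* compact by Alaoglu. Being in the unit ball is clear since $\|\mu\|=1$ for every mean. For closedness: if $\langle\mu_\alpha\rangle$ is a net in $LIM(S)$ converging weak* to $\mu$, then for each fixed $f\in l_\infty(S)$ the condition $\mu_\alpha(f)\ge 0$ whenever $f\ge 0$ passes to the limit, giving $\mu\ge 0$; evaluating at $\cchi_S$ gives $\mu(\cchi_S)=1$, so $\mu\in MN(S)$ by Lemma~\ref{inMNS}; and for each fixed $s\in S$ and $f\in l_\infty(S)$, $\mu_\alpha(f\circ\lambda_s)=\mu_\alpha(f)$ passes to the limit to give $\mu(f\circ\lambda_s)=\mu(f)$. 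Hence $\mu\in LIM(S)$, so $LIM(S)$ is closed, hence compact. (Note SFC is not really needed here beyond guaranteeing $LIM(S)\neq\emptyset$; it is needed below.)

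The main obstacle is compactness of $LIM_0(S)$, since a priori $LIM_0(S)$ is only a set of cluster points of various Følner nets and there is no obvious reason it should be weak* closed. The key idea is to show $LIM_0(S)=LIM(S)$, and the two preceding parts then give compactness for free. The inclusion $LIM_0(S)\subseteq LIM(S)$ is Lemma~\ref{lemLIM0LIM}; also $LIM_0(S)\neq\emptyset$ since SFC produces a Følner net $\langle F_\alpha\rangle$ and the net $\langle\mu_{F_\alpha}\rangle$ lies in the weak* compact unit ball, hence has a cluster point. For the reverse inclusion $LIM(S)\subseteq LIM_0(S)$, I would fix $\lambda\in LIM(S)$ and build a single Følner net having $\lambda$ as a weak* cluster point, indexed by $D=\pf(S)\times\{\,(H,\epsilon):H\in\pf(S),\ \epsilon>0\,\}\times\pf(l_\infty(S))\times\mathbb Q^{>0}$ (or a similar directed set combining the Følner data with a finite list of test functions $f_1,\dots,f_n$ and a tolerance $\delta$). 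For such an index I want a single $K\in\pf(S)$ that is simultaneously $(H,\epsilon)$-Følner and satisfies $|\mu_K(f_i)-\lambda(f_i)|<\delta$ for all $i$. The existence of such a $K$ is the crux: one averages over translates. Concretely, starting from a Følner set $F$ for $(H\cup\{s:\text{needed}\},\epsilon')$ and using left invariance of $\lambda$, the value $\lambda(f_i)$ is approximated by $\mu_F(f_i)$ after averaging $\mu_F$ over left translates $\lambda_s$ for $s$ in a large finite set and invoking that $\lambda$ is (by Day's argument / the structure of $LIM(S)$) in the weak* closure of convex combinations of point masses precomposed with translations — equivalently, using that the convex set $\{\mu_F:F\in\pf(S)\}$ after translation-averaging is weak* dense in $LIM(S)$. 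This last density fact is where the real work lies; it is a standard consequence of SFC together with the Hahn–Banach separation theorem (if $\lambda$ were not in that weak* closed convex hull, a functional in $l_\infty(S)^{**}$, i.e. an element of $l_\infty(S)$ up to the relevant identification, would separate it, contradicting the Følner property). Once such $K$ exists for every index, $\langle K\rangle_{D}$ is a Følner net with $\lambda$ as a cluster point, so $\lambda\in LIM_0(S)$; therefore $LIM_0(S)=LIM(S)$, which is compact by the second part. I expect the Hahn–Banach/averaging density step to be the hard part, and the bookkeeping of the directed set $D$ to be the fiddly but routine part.
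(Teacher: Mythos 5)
Your first two parts are fine: convexity of $LIM(S)$ and its weak* closedness inside the unit ball (hence compactness via Alaoglu) are exactly the ``easy to see'' portion of the paper's proof. The problem is the third part. Your entire argument for compactness of $LIM_0(S)$ rests on the claim $LIM(S)\subseteq LIM_0(S)$, and this claim is not established by your sketch; indeed it is stronger than the question the paper explicitly leaves open (whether $LIM(S)$ is merely the weak* closed \emph{convex hull} of $LIM_0(S)$, known only for left cancellative $S$ via Krein--Milman). Concretely, the Hahn--Banach separation argument you invoke can at best show that $\lambda$ lies in the weak* closed convex hull of $\{\mu_F: F\ (H,\epsilon)\hbox{-invariant}\}$; membership in $LIM_0(S)$ requires $\lambda$ to lie in the weak* \emph{closure} of that non-convex set for every $(H,\epsilon)$, i.e.\ a single $(H,\epsilon)$-invariant $K$ with $\mu_K(f_i)\approx\lambda(f_i)$ for all the test functions simultaneously, and uniform averages $\mu_K$ cannot be substituted for arbitrary convex combinations without real work. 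Moreover, even the separation step itself would need $\sup\{\mu_F(f):F\ (H,\epsilon)\hbox{-invariant}\}\geq\lambda(f)$ for all $f\in l_\infty(S)$, which for $f=\cchi_A$ is essentially $d(A)\geq d^*(A)$ --- the paper's main Theorem \ref{d=dstar}, proved much later by entirely different means (the cancellative quotient and Lemma \ref{lemRnotemp}), and certainly not available here. A further warning sign: if $LIM(S)\subseteq LIM_0(S)$ held, then combined with Theorem \ref{thmmax} it would instantly yield $d=d^*$, trivializing the paper's central result.

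The fix does not require comparing $LIM_0(S)$ with $LIM(S)$ at all. The paper proves compactness of $LIM_0(S)$ directly by showing
$LIM_0(S)=\bigcap\{\cl\{\mu_F:F\in{\mathcal F}_{H,\epsilon}\}:H\in\pf(S),\ \epsilon>0\}$,
where ${\mathcal F}_{H,\epsilon}$ is the family of $(H,\epsilon)$-invariant finite sets; this exhibits $LIM_0(S)$ as an intersection of weak*-closed subsets of the (compact) set of means. One inclusion is immediate from the definition of cluster point; the other is proved by exactly the kind of directed-set construction you sketch at the end (index by neighborhoods of $\nu$ together with $(H,\epsilon)$, choose $F_\alpha\in{\mathcal F}_{H,\epsilon}$ with $\mu_{F_\alpha}$ in the given neighborhood), which builds a F\o lner net converging to $\nu$. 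So your bookkeeping is the right tool for the correct statement; it is the unsupported identification $LIM_0(S)=LIM(S)$ that has to be abandoned.
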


\begin{proof} It is easy to see that $LIM(S)$ is convex and weak* compact. 
We shall show that $LIM_0(S)$ is weak* compact.

For every $H\in {\mathcal P}_f(S)$ and every $\epsilon>0$, let 
$${\mathcal F}_{H,\epsilon}=\{F\in {\mathcal P}_f(S):(\forall s\in H)(|F\setminus sF|<\epsilon\cdot|F|)\}\,,$$
and let $\mu_{H,\epsilon}=\{\mu_F:F\in {\mathcal F}_{H,\epsilon}\}$.
We shall show that $$\textstyle LIM_0(S)=\bigcap\{\cl \mu_{H,\epsilon}:H\in\pf(S)\hbox{ and }\epsilon>0\}\,.$$ 
This will suffice since each $\mu_F$ is a mean on $S$.
First, let $\nu\in LIM_0(S)$ and pick a F\o lner net $\langle F_\alpha\rangle_{\alpha\in D}$
in $\pf(S)$ such that $\nu$ is a cluster point of $\langle \mu_{F_\alpha}\rangle_{\alpha\in D}$ in
$l_\infty(S)^*$. Let $H\in\pf(S)$ and $\epsilon>0$. To see that $\nu\in\cl \mu_{H,\epsilon}$,
let $U$ be a neighborhood of $\nu$ in $l_\infty(S)^*$.  Pick $\ggamma\in D$ such that for 
all $s\in H$ and all $\alpha\geq \ggamma$ in $D$, $|F_\alpha\setminus sF_\alpha|<\epsilon\cdot|F_\alpha|$
and pick $\alpha\geq \ggamma$ such that $\mu_{F_\alpha}\in U$. Then $\mu_{F_\alpha}\in U\cap\mu_{H,\epsilon}$.

Now let $\nu\in \bigcap\{\cl \mu_{H,\epsilon}:H\in\pf(S)$ and $\epsilon>0\}$.
Let ${\mathcal U}$ be the set of open neighborhoods of $\nu$ in $l_\infty(S)^*$.
Let $D={\mathcal U}\times \pf(S)\times (0,1)$ and direct $D$ by
$(U,H,\epsilon)\leq (V,K,\delta)$ provided $V\subseteq U$, $H\subseteq K$, and $\delta\leq \epsilon$.
For $\alpha=(U,H,\epsilon)\in D,$ pick $F_\alpha\in {\mathcal F}_{H,\epsilon}$ such that 
$\mu_{F_\alpha}\in U\cap \mu_{H,\epsilon}$. Then
$\langle F_\alpha\rangle_{\alpha\in D}$ is a F\o lner net in $\pf(S)$ and $\langle \mu_{F_\alpha}\rangle_{\alpha\in D}$ 
converges to $\nu$.
\end{proof}

\begin{theorem}\label{thmneteq} Let $(S,\cdot)$ be a semigroup satisfying SFC, let
$A\subseteq S$, and let $\delta=d(A)$. There is a F\o lner net
$\langle F_\alpha\rangle_{\alpha\in D}$ such that the net
$\displaystyle \left\langle \frac{|F_\alpha\cap A|}{|F_\alpha|}\right\rangle_{\alpha\in D}$
converges to $\delta$. 

If $\nu$ is any cluster point of the net 
$\langle \mu_{F_\alpha}\rangle_{\alpha\in D}$ in $\bigtimes_{f\in l_\infty(S)}[-||f||_\infty,||f||_\infty]$,
then $\nu\in LIM_0(S)$ and $\nu(\cchi_A)=\delta$. In particular $d(A)\leq d^*(A)$.\end{theorem}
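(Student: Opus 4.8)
The plan is to construct the F\o lner net directly from the definition of $d(A)$ by a diagonalization over the directed set of triples $(H,\epsilon,n)$, and then to verify that any weak* cluster point of the associated means computes both the density of $A$ and lies in $LIM_0(S)$.

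\textbf{Step 1: building the net.} Let $\delta=d(A)$. I would direct the set $D=\pf(S)\times(0,1)\times\ben$ by $(H,\epsilon,n)\leq (K,\delta',m)$ iff $H\subseteq K$, $\delta'\leq\epsilon$, and $n\leq m$. Given $\alpha=(H,\epsilon,n)\in D$, the definition of $d(A)$ as a supremum guarantees that for the value $\alpha':=\max\{0,\delta-\tfrac1n\}$ there is some $K\in\pf(S)$ with $(\forall s\in H)(|K\setminus sK|<\epsilon\cdot|K|)$ and $|A\cap K|\geq\alpha'\cdot|K|$; choose such a $K$ and call it $F_\alpha$. (One must be slightly careful when $\delta=0$, where $\alpha'=0$ and the set condition alone is used; this is fine since $S$ satisfies SFC so $\mathcal F_{H,\epsilon}\neq\emp$.) Then $\langle F_\alpha\rangle_{\alpha\in D}$ is a F\o lner net in $\pf(S)$: given $s\in S$ and $\epsilon>0$, for all $\alpha\geq(\{s\},\epsilon,1)$ we have $|F_\alpha\setminus sF_\alpha|<\epsilon\cdot|F_\alpha|$. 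Moreover $\tfrac{|F_\alpha\cap A|}{|F_\alpha|}\to\delta$: the lower bound $\geq\delta-\tfrac1n\to\delta$ is built in, and the upper bound $\leq\delta$ for each term follows because if some $F_\alpha$ had $|A\cap F_\alpha|>\delta'|F_\alpha|$ for $\delta'>\delta$, that single set would witness membership of $\delta'$ in the supremand set in Definition \ref{defdensity} (the set condition is what it needs), contradicting $d(A)=\delta$. Hence the ratio net converges to $\delta$.

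\textbf{Step 2: the cluster point.} The net $\langle\mu_{F_\alpha}\rangle_{\alpha\in D}$ lies in the compact space $X=\bigtimes_{f\in l_\infty(S)}[-\|f\|_\infty,\|f\|_\infty]$, so it has a cluster point $\nu\in X$. Since $\nu$ is a cluster point of a net of means along the F\o lner net $\langle F_\alpha\rangle$, it is by definition a cluster point of $\langle\mu_{F_\alpha}\rangle_{\alpha\in D}$ in the weak* topology of $l_\infty(S)^*$ — I should note that $X$ restricted to $l_\infty(S)^*$ carries exactly the weak* topology, and that $\nu$ is automatically linear and positive with $\nu(\cchi_S)=1$ as a pointwise limit (along a subnet) of such functionals, so $\nu\in MN(S)\subseteq l_\infty(S)^*$; then $\nu\in LIM_0(S)$ directly from Definition \ref{defmuF}(2). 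Finally, because $\tfrac{|F_\alpha\cap A|}{|F_\alpha|}=\mu_{F_\alpha}(\cchi_A)$ converges to $\delta$ by Step 1, every cluster point of this real net equals $\delta$; in particular $\nu(\cchi_A)=\delta$.

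\textbf{Step 3: the density inequality.} By Lemma \ref{lemLIM0LIM}, $\nu\in LIM_0(S)\subseteq LIM(S)$, so $\nu$ is a left invariant mean with $\nu(\cchi_A)=\delta$. By Definition \ref{defBdensity}, $d^*(A)=\sup\{\lambda(\cchi_A):\lambda\in LIM(S)\}\geq\nu(\cchi_A)=d(A)=\delta$, which is the claimed inequality $d(A)\leq d^*(A)$.

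\textbf{Main obstacle.} The routine parts are the directed-set bookkeeping and the continuity/positivity checks; the one genuinely delicate point is the two-sided control of the ratios $\tfrac{|F_\alpha\cap A|}{|F_\alpha|}$. The lower bound is engineered into the construction, but the upper bound by $\delta$ must be argued carefully from the fact that $d(A)$ is a supremum over \emph{all} $H$ and $\epsilon$ simultaneously — one has to check that a single finite set achieving too large a value of $|A\cap K|/|K|$ while satisfying the $(H,\epsilon)$-F\o lner condition would already push the supremum above $\delta$, which requires reading Definition \ref{defdensity} precisely. Everything else is standard compactness and the cited Lemma \ref{lemLIM0LIM}.
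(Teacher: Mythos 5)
Your construction and overall route (build the net from the definition of $d(A)$, take a weak* cluster point, apply Lemma \ref{lemLIM0LIM}) is essentially the paper's, but the justification of the upper bound in Step 1 is wrong, and this is exactly the point you flagged as the main obstacle. You claim that if some $F_\alpha$ had $|A\cap F_\alpha|>\delta'\cdot|F_\alpha|$ with $\delta'>\delta$, then ``that single set would witness membership of $\delta'$ in the supremand set'' of Definition \ref{defdensity}. It would not: membership of $\delta'$ requires, for \emph{every} pair $(H,\epsilon)$, some $(H,\epsilon)$-invariant $K$ with $|A\cap K|\geq\delta'\cdot|K|$, and a single $(H,\epsilon)$-invariant set only supplies witnesses for coarser pairs (smaller $H$, larger $\epsilon$), not for finer ones. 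Indeed the term-by-term bound you assert is false in general: in $(\ben,+)$ with $A$ the even numbers, $d(A)=\frac12$, yet for any fixed $(H,\epsilon)$ one can enlarge a long interval by relatively few additional even numbers and obtain an $(H,\epsilon)$-invariant set whose ratio strictly exceeds $\frac12$. So individual terms of your net may have $|A\cap F_\alpha|/|F_\alpha|>\delta$, and as written your proof that the ratio net converges to $\delta$ has a gap.

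The gap is repairable, and the repair is precisely what the paper's ``routine exercise'' encodes. Given $\delta'>\delta$, since $\delta'$ lies above the supremum there exist $H_0\in\pf(S)$ and $\epsilon_0>0$ such that \emph{every} $(H_0,\epsilon_0)$-invariant $K$ satisfies $|A\cap K|<\delta'\cdot|K|$. In your net, every index $\alpha=(H,\epsilon,n)\geq(H_0,\epsilon_0,1)$ has $H\supseteq H_0$ and $\epsilon\leq\epsilon_0$, so the chosen $F_\alpha$ is automatically $(H_0,\epsilon_0)$-invariant and hence $|A\cap F_\alpha|/|F_\alpha|<\delta'$ \emph{eventually}; combined with the built-in lower bound $\geq\delta-\frac1n$ this gives convergence of the ratios to $\delta$, but via an eventual bound, not a per-term one. (The paper instead builds the two-sided estimate $(\delta-\epsilon)\cdot|K|<|A\cap K|<(\delta+\epsilon)\cdot|K|$ into the choice of each $F_\alpha$, which requires the same quantifier argument.) Steps 2 and 3 are fine, including your remark that a cluster point in the product space is linear, positive, and of norm one, hence lies in $l_\infty(S)^*$, so that Definition \ref{defmuF}(2) and Lemma \ref{lemLIM0LIM} apply and $d(A)\leq d^*(A)$ follows.
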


\begin{proof}  Since $d(A)=\delta$, it is a routine exercise to show that\hfill\break
$\big(\forall H\in\pf(S)\big)
(\forall\epsilon>0)\big(\exists K\in\pf(S)\big)
\big((\forall s\in H)(|K\setminus sK|<\epsilon\cdot|K|)
\hbox{ and }\hfill\break(\delta-\epsilon)\cdot |K|<|A\cap K|<(\delta+\epsilon)\cdot|K|\big)$.

Let $D=\pf(S)\times\ben$, and direct $D$ by $(H,n)\leq (K,m)$ if and only if $H\subseteq K$ and $n\leq m$.
For $\alpha=(H,n)\in D$, pick $F_\alpha\in\pf(S)$ such that\\
$(\forall s\in H)(|F_\alpha\setminus sF_\alpha|<{1\over n}\cdot |F_\alpha|$ and
 $(\delta-\frac{1}{n})\cdot|F_\alpha|<|F_\alpha\cap A|<(\delta+\frac{1}{n})\cdot|F_\alpha|$.

Let $\nu$ be a cluster point of the net 
$\langle \mu_{F_\alpha}\rangle_{\alpha\in D}$.
Since $\langle \mu_{F_\alpha}(\cchi_A)\rangle_{\alpha\in D}$ converges
to $\delta$, we have that $\nu(\cchi_A)=\delta$.
\end{proof}

\begin{theorem}\label{thmmax} Let $(S,\cdot)$ be a semigroup satisfying SFC, and let $A\subseteq S$.
Then $d(A)=\max\{\nu(\cchi_A):\nu\in LIM_0(S)\}$.\end{theorem}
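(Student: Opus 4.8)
The plan is to prove the two inequalities $d(A)\geq\max\{\nu(\cchi_A):\nu\in LIM_0(S)\}$ and $d(A)\leq\nu_0(\cchi_A)$ for a suitably chosen $\nu_0\in LIM_0(S)$, the latter witnessing that the supremum defining $d(A)$ is attained as a maximum. The second inequality is essentially already in hand: Theorem \ref{thmneteq} produces a F\o lner net $\langle F_\alpha\rangle_{\alpha\in D}$ with $|F_\alpha\cap A|/|F_\alpha|\to\delta=d(A)$, and any cluster point $\nu_0$ of $\langle\mu_{F_\alpha}\rangle$ (which exists by weak* compactness of the closed unit ball, Alaoglu) lies in $LIM_0(S)$ and satisfies $\nu_0(\cchi_A)=\delta$. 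So $d(A)$ is achieved by some element of $LIM_0(S)$, giving $d(A)\leq\max\{\nu(\cchi_A):\nu\in LIM_0(S)\}$ and the existence of a maximizer once we know the reverse bound.

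The first inequality is the real content. Let $\nu\in LIM_0(S)$ be arbitrary; I must show $\nu(\cchi_A)\leq d(A)$. By definition of $LIM_0(S)$, there is a F\o lner net $\langle F_\alpha\rangle_{\alpha\in D}$ in $\pf(S)$ such that $\nu$ is a weak* cluster point of $\langle\mu_{F_\alpha}\rangle_{\alpha\in D}$. Fix $H\in\pf(S)$ and $\epsilon>0$; I want to produce $K\in\pf(S)$ with $|K\setminus sK|<\epsilon\cdot|K|$ for all $s\in H$ and $|A\cap K|\geq(\nu(\cchi_A)-\epsilon)\cdot|K|$, which by the definition of F\o lner density shows $d(A)\geq\nu(\cchi_A)-\epsilon$; letting $\epsilon\to 0$ finishes. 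To find such a $K$: since $\langle F_\alpha\rangle$ is a F\o lner net, there is $\ggamma_1\in D$ beyond which $|F_\alpha\setminus sF_\alpha|<\epsilon\cdot|F_\alpha|$ for every $s\in H$. Since $\nu$ is a cluster point of $\langle\mu_{F_\alpha}\rangle$ and the weak* topology makes evaluation at $\cchi_A$ continuous, the set $U=\{\mu\in l_\infty(S)^*:\mu(\cchi_A)>\nu(\cchi_A)-\epsilon\}$ is a weak* neighborhood of $\nu$, so there is $\alpha\geq\ggamma_1$ with $\mu_{F_\alpha}\in U$, i.e. $|A\cap F_\alpha|/|F_\alpha|=\mu_{F_\alpha}(\cchi_A)>\nu(\cchi_A)-\epsilon$. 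Taking $K=F_\alpha$ does the job.

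The main obstacle, such as it is, is bookkeeping with the cofinality of cluster points rather than any genuine difficulty: one must be careful that the single index $\alpha$ can simultaneously be taken large enough to control all $s\in H$ (a single $\ggamma_1$ suffices because $H$ is finite and $D$ is directed) and to land $\mu_{F_\alpha}$ in the neighborhood $U$ (this uses precisely the definition of cluster point, that every neighborhood of $\nu$ contains $\mu_{F_\alpha}$ for cofinally many $\alpha$). No use of Lemma \ref{lemLIM0LIM} or of left invariance is needed for either inequality; the result is purely about the relationship between the sup defining $d(A)$ and cluster points of averaging functionals along F\o lner nets. Assembling the pieces: the first inequality gives $\nu(\cchi_A)\leq d(A)$ for all $\nu\in LIM_0(S)$, hence $\sup\{\nu(\cchi_A):\nu\in LIM_0(S)\}\leq d(A)$; the second gives a specific $\nu_0\in LIM_0(S)$ with $\nu_0(\cchi_A)=d(A)$; together these yield $d(A)=\max\{\nu(\cchi_A):\nu\in LIM_0(S)\}$.
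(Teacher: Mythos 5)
Your proposal is correct and follows essentially the same route as the paper: the bound $d(A)\le\max$ and the existence of a maximizer come from Theorem \ref{thmneteq}, and the reverse inequality $\nu(\cchi_A)\le d(A)$ is obtained, exactly as in the paper, by combining eventual $(H,\epsilon)$-invariance of the F\o lner net with the cluster-point property to produce a single witness set (the paper merely phrases this as a contradiction with the fixed margin $\beta/2$). The only nitpick is that your $\epsilon$ plays two roles (invariance error and density deficit), so to conclude $d(A)\ge\nu(\cchi_A)-\epsilon$ literally from Definition \ref{defdensity} you should, for the arbitrary pair $(H',\epsilon')$ appearing there, run your construction with $\min(\epsilon,\epsilon')$; the paper's fixed threshold $\delta-\frac{\beta}{2}$ sidesteps this, but it is a trivial repair.
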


\begin{proof} By Theorem \ref{thmneteq}, it suffices to show that if $\nu\in LIM_0(S)$, then
$\nu(\cchi_A)\leq d(A)$. So let $\nu\in LIM_0(S)$ and suppose that 
$\nu(\cchi_A)=\delta>\gamma=d(A)$.  Pick a F\o lner net $\langle F_a\rangle_{a\in D}$ in 
$\pf(S)$ such that $\nu$ is a cluster point of the net
$\langle \mu_{F_a}\rangle_{a\in D}$ in $\bigtimes_{f\in l_\infty(S)}[-||f||_\infty,||f||_\infty]$.

Let $\beta=\delta-\gamma$. We shall show that $d(A)\geq \delta-\frac{\beta}{2}$, a contradiction.
So let $H\in\pf(S)$ and $\epsilon>0$ be given. Since $\langle F_a\rangle_{a\in D}$ is 
a F\o lner net, pick $a\in D$ such that for every $b\in D$ with $b\geq a$ and every $s\in H$,
$\displaystyle \frac{|F_b\setminus sF_b|}{|F_b|}<\epsilon$.  Let $g=\cchi_A$ and let
$U=\pi_{g}^{-1}[(\delta-\frac{\beta}{2},\delta+\frac{\beta}{2})]$. Then
$U$ is a neighborhood of $\nu$, so pick $b\in D$ such that $b\geq a$ and $\mu_{F_b}\in U$.
Then $\displaystyle \frac{|A\cap F_b|}{|F_b|}=\mu_{F_b}(g)>\delta-\frac{\beta}{2}$.  \end{proof}

\begin{question}
Let $(S,\cdot)$ be a semigroup satisfying SFC.  Is $LIM(S)$ the weak* closed convex hull of $LIM_0(S)$?
\end{question} 

We know that the answer is affirmative if $S$ is left cancellative. This follows from the Krein-Milman Theorem and \cite[Corollary 2.13]{HSa}.

\subsection{Translation density}

We will consider a third notion of density, denoted by $d_t$, which is defined in 
any semigroup. (The $t$ stands for {\it translate\/} which seems appropriate 
if the operation is written additively (so $As^{-1}=\{x\in S:xs\in A\}$
in the definition becomes $A-s=\{x\in S:x+s\in A\}$).) This notion of density was
defined first in \cite[Theorem 3.2]{BG} but appeared and was considered in relation
to the upper Banach density in various other places: in \cite[Lemma 9.6]{FK} for $(\ber^d,+)$,
in \cite[Corollary 9.2]{Gri} for $(\bez,+)$, and in a pre-publication version of \cite{JR} (as Theorem G)
for cancellative semigroups satisfying SFC.

\begin{definition}\label{defdt} Let $(S,\cdot)$ be a semigroup
and let $A\subseteq S$. Then $d_t(A)=\sup\{\alpha\in[0,1]:\big(\forall F\in\pf(S)\big)
(\exists s\in S)(|F\cap As^{-1}|\geq\alpha\cdot|F|)\}$. \end{definition}

The proof of the next lemma is based on the proof of \cite[Theorem 3.2]{BG}.
This lemma  plays an important role in our paper.

\begin{lemma}\label{lemRnotemp} Let $(S,\cdot)$ be a left amenable semigroup,
let $A\subseteq S$, let $\mu$ be a left invariant mean on $S$ such that
$\mu(\cchi_A)>0$. Assume that $F\in\pf(S)$ and $0<\eta<\delta\leq\mu(\cchi_A)$
and let $R=\{s\in S:|F\cap As^{-1}|\geq\eta\cdot|F|\}$. Then
$\mu(\cchi_R)\geq\displaystyle\frac{\delta-\eta}{1-\eta}$. In particular,
$R\neq\emp$.\end{lemma}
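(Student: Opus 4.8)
The plan is to set up an averaging argument on $F$, exploiting the left invariance of $\mu$ through the identity in Lemma \ref{s^{-1}A}, namely $\mu(t^{-1}B) = \mu(B)$ for every $t \in S$ and $B \subseteq S$. First I would write $F = \{t_1, \dots, t_n\}$ and observe that, for a fixed $s \in S$, we have $s \in As^{-1}$ being the wrong thing to look at; rather, we want to count how many $i$ satisfy $t_i s \in A$, i.e.\ $s \in t_i^{-1}A$. So $|F \cap As^{-1}| = |\{i : t_i s \in A\}| = \sum_{i=1}^n \cchi_{t_i^{-1}A}(s)$. Therefore the function $\varphi = \frac{1}{|F|}\sum_{i=1}^n \cchi_{t_i^{-1}A}$ takes the value $\frac{1}{|F|}|F \cap As^{-1}|$ at each $s$, and $R = \{s : \varphi(s) \geq \eta\}$ is a superlevel set of $\varphi$.

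Next I would compute $\mu(\varphi)$. By linearity, $\mu(\varphi) = \frac{1}{|F|}\sum_{i=1}^n \mu(t_i^{-1}A)$, and by Lemma \ref{s^{-1}A} each term equals $\mu(\cchi_A)$, so $\mu(\varphi) = \mu(\cchi_A) \geq \delta$. Now I want a Markov/Chebyshev-type inequality for means: since $0 \leq \varphi \leq 1$ pointwise, we can split $\varphi \leq \cchi_R + \eta\,\cchi_{S \setminus R}$, because on $R$ we bound $\varphi$ by $1 = \cchi_R$ and off $R$ we bound $\varphi$ by $\eta$ (by definition of $R$, on $S \setminus R$ we have $\varphi(s) < \eta$). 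Applying $\mu$, which is positive and hence monotone, and using $\mu(\cchi_{S\setminus R}) = 1 - \mu(\cchi_R)$, gives $\delta \leq \mu(\varphi) \leq \mu(\cchi_R) + \eta(1 - \mu(\cchi_R))$. Solving for $\mu(\cchi_R)$ yields $\mu(\cchi_R)(1 - \eta) \geq \delta - \eta$, i.e.\ $\mu(\cchi_R) \geq \frac{\delta - \eta}{1 - \eta}$, which is positive since $\eta < \delta$; in particular $R \neq \emptyset$.

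I do not anticipate a genuine obstacle here: the argument is a clean two-step computation. The one point requiring a little care is the monotonicity step — that a mean is monotone, i.e.\ $f \leq g$ pointwise implies $\mu(f) \leq \mu(g)$ — which follows from positivity of $\mu$ applied to $g - f \geq 0$, and the fact that $\mu(\cchi_S) = 1$ so that $\mu(\cchi_{S \setminus R}) = \mu(\cchi_S) - \mu(\cchi_R) = 1 - \mu(\cchi_R)$ by finite additivity on characteristic functions of disjoint sets. One should also double-check the harmless direction of the inequality defining $R$ versus $S \setminus R$: membership in $R$ uses $\geq \eta$, so on the complement $\varphi < \eta$, which is exactly what the bound $\varphi \leq \cchi_R + \eta \cchi_{S\setminus R}$ needs. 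The role of $\delta$ (as opposed to using $\mu(\cchi_A)$ directly) is purely to give a clean closed-form lower bound; one could equally state it with $\mu(\cchi_A)$ in place of $\delta$.
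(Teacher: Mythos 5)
Your proposal is correct and follows essentially the same route as the paper: both identify $|F\cap As^{-1}|/|F|$ with the average $\frac{1}{|F|}\sum_{x\in F}\cchi_{x^{-1}A}$, use left invariance to get $\mu$ of that average equal to $\mu(\cchi_A)$, and then run the Markov-type bound $\varphi\leq\cchi_R+\eta\,\cchi_{S\setminus R}$ (the paper phrases this as $\mu(g)\leq\mu(g\cchi_R)+\mu(g\cchi_{S\setminus R})$, which is the same estimate) to solve for $\mu(\cchi_R)$. No gaps; your monotonicity and finite-additivity caveats are exactly the points the paper also relies on.
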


\begin{proof} Note that for $x\in S$, $\cchi_{x^{-1}A}=\cchi_A\circ\lambda_x$ so
$\mu(\cchi_{x^{-1}A})=\mu(\cchi_A\circ\lambda_x)=\mu(\cchi_A)$.
Define $g:S\to[0,1]$ by $g(t)=\displaystyle\frac{|F\cap At^{-1}|}{|F|}$.
Then for $t\in S$, $g(t)=\frac{1}{|F|}\sum_{x\in F}\cchi_{At^{-1}}(x)=
\frac{1}{|F|}\sum_{x\in F}\cchi_{x^{-1}A}(t)$ so
$g=\frac{1}{|F|}\sum_{x\in F}\cchi_{x^{-1}A}$. Thus
$\mu(g)=\frac{1}{|F|}\sum_{x\in F}\mu(\cchi_{x^{-1}A})=
\frac{1}{|F|}\sum_{x\in F}\mu(\cchi_{A})=\mu(\cchi_A)$.

Since $\mu$ is additive, $\mu(\cchi_A)=\mu(g)\leq \mu(g\cchi_R)+\mu(g\cchi_{S\setminus R})$.
Since $g\cchi_R\leq \cchi_R$, $\mu(g\cchi_R)\leq \mu(\cchi_R)$.
For $t\in S\setminus R$, $|F\cap At^{-1}|<\eta\cdot|F|$ so 
$g(t)<\eta$. Also, $\mu(\cchi_{S\setminus R})=1-\mu(\cchi_R)$
so $\mu(\cchi_A)\leq\mu(g\cchi_R)+\mu(g\cchi_{S\setminus R})\leq\mu(\cchi_R)+\eta\mu(\cchi_{S\setminus R})=
\mu(\cchi_R)+\eta\big(1-\mu(\cchi_R)\big)$.
Therefore $\mu(\cchi_A)-\eta\leq \mu(\cchi_R)\cdot(1-\eta)$ so
$\mu(\cchi_R)\geq  \displaystyle\frac{\mu(\cchi_A)-\eta}{1-\eta}\geq\displaystyle\frac{\delta-\eta}{1-\eta}$.
\end{proof}

\begin{corollary}\label{dtgeqdstar} Let $(S,\cdot)$ be a left amenable semigroup. For every subset $A$ of $S$, $d_t(A) \geq d^*(A)$.
\end{corollary}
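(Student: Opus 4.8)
\section*{Proof proposal}

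The plan is to deduce the inequality directly from Lemma~\ref{lemRnotemp}. Since $d_t(A)$ is defined as the supremum of the set
$$\Sigma=\{\alpha\in[0,1]:\big(\forall F\in\pf(S)\big)(\exists s\in S)(|F\cap As^{-1}|\geq\alpha\cdot|F|)\}\,,$$
it suffices to show that every real number $\eta$ with $0\leq\eta<d^*(A)$ belongs to $\Sigma$; then $d_t(A)\geq\eta$ for all such $\eta$, and letting $\eta\uparrow d^*(A)$ gives $d_t(A)\geq d^*(A)$. (Note $d^*(A)\in[0,1]$, since $\cchi_A\leq\cchi_S$ and each left invariant mean is non-negative and linear with $\lambda(\cchi_S)=1$, so the $\eta$ under consideration lie in $[0,1)\subseteq[0,1]$.) If $d^*(A)=0$ there is nothing to prove, as $0\in\Sigma$ trivially (the inequality $|F\cap As^{-1}|\geq 0$ holds for any $s$, and $S\neq\emp$).

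So assume $d^*(A)>0$ and fix $\eta$ with $0<\eta<d^*(A)$. First I would pick $\delta$ with $\eta<\delta<d^*(A)$, and then, using that $d^*(A)$ is a supremum over $LIM(S)$, pick a left invariant mean $\mu$ on $S$ with $\mu(\cchi_A)>\delta$; in particular $0<\eta<\delta\leq\mu(\cchi_A)$ and $\mu(\cchi_A)>0$. Now let an arbitrary $F\in\pf(S)$ be given and set $R=\{s\in S:|F\cap As^{-1}|\geq\eta\cdot|F|\}$. Applying Lemma~\ref{lemRnotemp} to $A$, $\mu$, $F$, $\eta$, and $\delta$ yields $\mu(\cchi_R)\geq\frac{\delta-\eta}{1-\eta}>0$, so in particular $R\neq\emp$. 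Choosing $s\in R$ gives $|F\cap As^{-1}|\geq\eta\cdot|F|$. Since $F\in\pf(S)$ was arbitrary, $\eta\in\Sigma$, hence $d_t(A)\geq\eta$, which completes the argument.

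I do not expect any genuine obstacle: all the work is already contained in Lemma~\ref{lemRnotemp}. The only points that require a little care are the degenerate case $d^*(A)=0$ and, more importantly, the order of quantifiers in the definition of $d_t$ — the witness $s$ must be produced \emph{after} $F$ is fixed — but this is exactly what the nonemptiness of the set $R$ in Lemma~\ref{lemRnotemp} (for each individual $F$) provides.
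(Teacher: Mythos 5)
Your proposal is correct and uses exactly the paper's key ingredient, Lemma \ref{lemRnotemp}: for each $\eta<d^*(A)$ and each $F\in\pf(S)$, the nonemptiness of the set $R$ supplies the witness $s$. The paper phrases this as a proof by contradiction (assuming $d_t(A)<\eta<d^*(A)$ and deducing $R=\emp$), while you argue directly that every such $\eta$ lies in the defining set for $d_t(A)$; this is the same argument in contrapositive form, not a genuinely different route.
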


\begin{proof} Suppose $d_t(A)<d^*(A)$, pick $\eta$ such that $d_t(A)<\eta<d^*(A)$,
and pick $\mu\in LIM(S)$ such that $\delta=\mu(\cchi_A)>\eta$. Since $d_t(A)<\eta$
pick $F\in\pf(S)$ such that for all $s\in S$, $|F\cap As^{-1}|<\eta\cdot|F|$.
But then the set $R$ in Lemma \ref{lemRnotemp} is empty, a contradiction.
\end{proof}

Combining the conclusions of Theorem \ref{thmneteq} and Corollary \ref{dtgeqdstar},
we have shown that in any semigroup $(S,\cdot)$ satisfying SFC, for all $A \subseteq S$,
$$d(A) \leq d^*(A) \leq d_t(A).$$
Bergelson and Glasscock \cite[Theorem 3.5]{BG} showed that these three quantities are equal
if $S$ satisfies SFC and is either left cancellative or commutative. We shall show in Theorem \ref{d=dstar} in Section
\ref{sec_density_and_quotients} that the same conclusion holds for any semigroup satisfying SFC. 

It is easy to see that in any left amenable semigroup, $d^*$ is both left
invariant and subadditive.  Therefore, as a consequence of Theorem \ref{d=dstar},
if $S$ satisfies SFC, then both $d$ and $d_t$ are left invariant and subadditive. 
 While $d_t$ is defined on the subsets of every semigroup,
there are semigroups in which
$d_t$ is neither subadditive nor left invariant. 
For example, let $S$ be the free semigroup on two generators $a$ and $b$,
let $A$ be the set of elements of $S$ whose first letter is $a$, and let $B$ be the set of elements of 
$S$ whose first letter is $b$. Then $d_t(A)=d_t(B)=0$, because $\{a\} \cap Bs^{-1}=\emp$ for every $s\in S$. However,
$d_t(A\cup B)=1$. Furthermore, $d_t(a^{-1}A)=1$, because $a^{-1}A=S$.

\begin{question} Let $(S,\cdot)$ be a semigroup.
\begin{itemize}
\item[(1)] If $S$ is left amenable and $A\subseteq S$, must $d_t(A)=d^*(A)$?
\item[(2)] Is $d_t(x^{-1}A)\geq d_t(A)$ for every $A\subseteq S$ and every $x\in S$?
\end{itemize} 
\end{question}

If $S$ is any semigroup, $A\subseteq S$, and $x$ is a left cancelable element of $S$, then it is
easy to show that $d_t(x^{-1}A )\geq d_t(A)$.
We do not know of any example for which $d_t(x^{-1}A)<d_t(A)$.

\section{The ideal of ultrafilters whose members have positive density}

Given a discrete semigroup $(S,\cdot)$, the Stone-\v Cech compactification, $\beta S$
of $S$, is the set of ultrafilters on $S$. We identify a point $x\in S$ with
the principal ultrafilter $\{A\subseteq S:x\in A\}$. The topology on 
$\beta S$ has a basis consisting of the open and closed subsets $\{\overline A:A\subseteq S\}$,
where $\overline A=\{p\in\beta S:A\in p\}$. The notation is justified by the fact that $\overline A=\cl_{\beta S}(A)$.

If $X$ is a compact Hausdorff space and $f:S\to X$, we denote by $\widetilde f$ the 
continuous extension of $f$ taking $\beta S$ to $X$.
The operation on $S$ extends to $\beta S$, making $(\beta S,\cdot)$ a right topological
semigroup with $S$ contained in the topological center of $\beta S$. That is, 
for each $p\in\beta S$, the function $\rho_p:\beta S\to \beta S$ defined by
$\rho_p(q)=qp$ is continuous and for each $x\in S$, the function $\lambda_x:\beta S\to \beta S$ defined by
$\lambda_x(q)=xq$ is continuous. As does any compact Hausdorff right topological 
semigroup, $\beta S$ has a smallest two sided ideal, denoted $K(\beta S)$,
which is the union of all minimal left ideals and is also the union of all minimal right ideals.
Any two minimal left ideals are isomorphic, any two minimal right ideals are 
isomorphic, and the intersection of any minimal right ideal and any minimal left
ideal is a group.
For basic information about the algebraic structure
of $\beta S$ see \cite[Part I]{HS}.

\begin{definition}\label{defDelta} Let $(S,\cdot)$ be a semigroup satisfying 
SFC. Then \\
$\Delta(S)=\{p\in\beta S:(\forall A\in p)(d(A)>0)\}$. \end{definition}

\begin{definition}\label{defDeltastar} Let $(S,\cdot)$ be a left amenable semigroup. Then\\
$\Delta^*(S)=\{p\in\beta S:(\forall A\in p)(d^*(A)>0)\}$. \end{definition}

For any semigroup $(S,\cdot)$ that satisfies SFC, $\Delta(S)$ is a left ideal of $\beta S$. It has been an open question 
for decades whether $\Delta(S)$ is a right ideal of $\beta S$. We will show at the 
conclusion of this section that for any left amenable semigroup $S$, $\Delta^*(S)$ is an ideal of $\beta S$.
When we have shown that $d=d^*$ for semigroups satisfying SFC, the following lemma 
will be an immediate consequence. But we need it earlier, and it follows quickly from
a recent result. For the purposes of this paper we will take as the definition of 
{\it piecewise syndetic\/} the fact from \cite[Corollary 4.41]{HS} that a subset
$A$ of $S$ is piecewise syndetic if and only if $\cl A\cap K(\beta S)\neq\emp$.

\begin{lemma}\label{lemKinDelta} If $(S,\cdot)$ is a semigroup satisfying SFC,
$\cl K(\beta S)\subseteq \Delta(S)$. \end{lemma}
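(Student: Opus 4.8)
The plan is to show that every element of $K(\beta S)$ lies in $\Delta(S)$ and then take closures, using the fact that $\Delta(S)$ is closed. First I would recall that $\Delta(S)$ is a closed subset of $\beta S$: indeed $\beta S\setminus\Delta(S)=\bigcup\{\overline A:d(A)=0\}$, which is open. So it suffices to prove $K(\beta S)\subseteq\Delta(S)$, and then $\cl K(\beta S)\subseteq\cl\Delta(S)=\Delta(S)$.

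To prove $K(\beta S)\subseteq\Delta(S)$, let $p\in K(\beta S)$ and let $A\in p$; I must show $d(A)>0$. Since $p\in\overline A\cap K(\beta S)$, the set $A$ is piecewise syndetic, which by the adopted definition (from \cite[Corollary 4.41]{HS}) is exactly the statement that $\overline A\cap K(\beta S)\neq\emp$. Now I would invoke the recent result alluded to in the text: for a semigroup satisfying SFC, every piecewise syndetic set has positive F\o lner density. (This is presumably the cited ``recent result''; I would look for it in the bibliography — likely \cite{HSc} or a companion paper — and quote it as: if $S$ satisfies SFC and $A\subseteq S$ is piecewise syndetic, then $d(A)>0$.) Applying this to $A$ gives $d(A)>0$, as required, so $p\in\Delta(S)$.

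The one point that needs care — and the place where the ``recent result'' is doing all the work — is the implication \emph{piecewise syndetic $\Rightarrow$ positive F\o lner density}. If one wanted to prove this from scratch rather than cite it, the natural route is: write $A$ piecewise syndetic as $A=\bigcup_{t\in H}t^{-1}B$ on a thick set for suitable $B$ and finite $H$ (the classical combinatorial characterization), observe that thick sets have F\o lner density $1$ since along any F\o lner net $\langle F_\alpha\rangle$ one can translate the $F_\alpha$'s inside the thick set, and then use subadditivity together with left-invariance of the relevant densities on $LIM_0(S)$ to conclude $d(A)\ge |H|^{-1}$. But since the excerpt explicitly says this ``follows quickly from a recent result,'' I would simply cite that result; the only real obstacle is locating the precise reference and checking its hypotheses match ``$S$ satisfies SFC'' (in particular that no cancellativity is assumed there). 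With that in hand the lemma is immediate.
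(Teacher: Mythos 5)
Your proposal is correct and essentially matches the paper's argument: the paper also reduces the lemma to the fact that every member of an ultrafilter in (the closure of) $K(\beta S)$ is piecewise syndetic and then cites \cite[Corollary 3.5(2)]{HSc}, which is exactly the ``recent result'' you identified, stated for semigroups satisfying SFC with no cancellativity hypothesis. The only cosmetic difference is that the paper works directly with $p\in\cl K(\beta S)$ (since $\overline A$ is open, $A\in p$ already forces $\overline A\cap K(\beta S)\neq\emp$), whereas you first prove $K(\beta S)\subseteq\Delta(S)$ and then use the closedness of $\Delta(S)$; both routes are fine.
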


\begin{proof} Let $p\in \cl K(\beta S)$ and let $A\in p$. Then 
$A$ is piecewise syndetic, so by \cite[Corollary 3.5(2)]{HSc}, $d(A)>0$.
\end{proof}

It is the first point in Lemma \ref{lemma_algebraic_properties_of_ultrafilter_shift} 
that justifies the notation ``$A q^{-1}$'' in the following definition.

\begin{definition}
Let $(S,\cdot)$ be a semigroup, $A \subseteq S$, and $q \in \beta S$.  We define
$$A q^{-1} = \big\{ s \in S : s^{-1} A \in q \big\}.$$
\end{definition}

\begin{lemma} \label{lemma_algebraic_properties_of_ultrafilter_shift}
Let $(S,\cdot)$ be a semigroup.  For all $A, B \subseteq S$, $t \in S$, and $p, q \in \beta S$,
\begin{enumerate}
    \item $A \in pq$ if and only if $A q^{-1} \in p$.
    \item $t^{-1} (A q^{-1}) = (t^{-1} A) q^{-1}$.
    \item $(A \cup B)q^{-1} = Aq^{-1} \cup B q^{-1}$.
    \item $(A \cap B)q^{-1} = Aq^{-1} \cap B q^{-1}$. Therefore, if the sets $A$ and $B$ are disjoint, then the sets $Aq^{-1}$ and $Bq^{-1}$ are disjoint.
\end{enumerate}
\end{lemma}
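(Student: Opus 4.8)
The plan is to verify each of the four identities by unwinding the definition $Aq^{-1} = \{s \in S : s^{-1}A \in q\}$ and the basic fact that for a point $p \in \beta S$ and sets $C, D \subseteq S$, membership in $p$ respects finite Boolean operations (so $C \cup D \in p \iff C \in p$ or $D \in p$, and $C \cap D \in p \iff C \in p$ and $D \in p$). I would take the four parts essentially in order, since part (1) is the conceptual heart and the remaining three are bookkeeping that reuse the same unwinding.

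For part (1), I would recall the definition of the product in $\beta S$: for $A \subseteq S$, $A \in pq$ if and only if $\{s \in S : s^{-1}A \in q\} \in p$ (this is the standard definition of the extended operation, as in \cite[Theorem 4.12]{HS}). The right-hand side is literally $Aq^{-1} \in p$ by definition, so there is nothing to prove beyond quoting the definition of multiplication in $\beta S$ and matching notation. For part (2), I would compute $t^{-1}(Aq^{-1}) = \{s \in S : ts \in Aq^{-1}\} = \{s \in S : (ts)^{-1}A \in q\}$, and separately $(t^{-1}A)q^{-1} = \{s \in S : s^{-1}(t^{-1}A) \in q\}$; these coincide because $s^{-1}(t^{-1}A) = \{x : sx \in t^{-1}A\} = \{x : tsx \in A\} = (ts)^{-1}A$, an equality that holds at the level of subsets of $S$ by associativity in $S$.

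For parts (3) and (4), the key observation is that left-quotienting distributes over unions and intersections at the level of subsets of $S$: $s^{-1}(A \cup B) = s^{-1}A \cup s^{-1}B$ and $s^{-1}(A \cap B) = s^{-1}A \cap s^{-1}B$. Then $(A \cup B)q^{-1} = \{s : s^{-1}A \cup s^{-1}B \in q\} = \{s : s^{-1}A \in q\} \cup \{s : s^{-1}B \in q\} = Aq^{-1} \cup Bq^{-1}$, using that $q$ is an ultrafilter (so a union lies in $q$ exactly when one of the pieces does), and similarly for intersection with $q$ being a filter. The disjointness corollary is immediate: if $A \cap B = \emptyset$ then $Aq^{-1} \cap Bq^{-1} = (A \cap B)q^{-1} = \emptyset q^{-1} = \{s : s^{-1}\emptyset \in q\} = \emptyset$, since $s^{-1}\emptyset = \emptyset \notin q$.

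I do not expect a genuine obstacle here — every step is a routine unwinding — but the one place to be careful is part (3), where one needs that $q$ is an \emph{ultra}filter (not merely a filter) to get the forward implication $s^{-1}A \cup s^{-1}B \in q \implies s^{-1}A \in q$ or $s^{-1}B \in q$; parts (1), (2), and (4) use only the filter properties. It is also worth stating explicitly at the outset the set-theoretic identities $s^{-1}(t^{-1}A) = (ts)^{-1}A$, $s^{-1}(A\cup B) = s^{-1}A \cup s^{-1}B$, and $s^{-1}(A\cap B) = s^{-1}A \cap s^{-1}B$, since all four parts rest on them, and then the proofs become one line apiece.
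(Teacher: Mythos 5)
Your proposal is correct and follows exactly the route the paper intends: the paper's proof simply cites \cite[Theorem 4.12]{HS} (the fact that $A\in pq$ if and only if $\{s\in S:s^{-1}A\in q\}\in p$) and calls the rest routine computation, which is precisely what you have spelled out, including the correct observation that only part (3) needs $q$ to be an ultrafilter rather than merely a filter.
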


\begin{proof} These are all routine computations using 
the fact from \cite[Theorem 4.12]{HS} that for any $p$ and $q\in\beta S$ and any
$A\subseteq S$, $A\in pq$ if and only if\\  $\{s\in S:s^{-1}A\in q\}\in p$.
\end{proof}

\begin{lemma}\label{lemma_ultrafilter_mean_shift}
Let $(S,\cdot)$ be a semigroup, $q \in \beta S$, and $\mu \in MN(S)$.  Define $\nu: \mathcal{P}(S) \to \ber$ by putting $\nu(A) = \mu(Aq^{-1})$ for every $A \subseteq S$.  Then the set function $\nu$ extends uniquely to a mean on $S$.  If $\mu$ is left invariant, then $\nu$ extends uniquely to a left invariant mean on $S$.
\end{lemma}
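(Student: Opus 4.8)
The plan is to verify the hypotheses of Lemma~\ref{content}, namely that $\nu$ is a non-negative finitely additive set function with $\nu(S) = 1$, and then invoke Lemma~\ref{content} for the unique extension to a mean; for the left invariant case, one additionally checks the characterization of Lemma~\ref{s^{-1}A}. First I would record non-negativity: for any $A \subseteq S$, the set $Aq^{-1} = \{s : s^{-1}A \in q\}$ is a subset of $S$, so $\cchi_{Aq^{-1}}$ is a non-negative function, and since $\mu \in MN(S)$ is non-negative, $\nu(A) = \mu(\cchi_{Aq^{-1}}) \geq 0$. Next, finite additivity: if $A$ and $B$ are disjoint, then by part~(4) of Lemma~\ref{lemma_algebraic_properties_of_ultrafilter_shift} the sets $Aq^{-1}$ and $Bq^{-1}$ are disjoint, and by part~(3) $(A \cup B)q^{-1} = Aq^{-1} \cup Bq^{-1}$; since $\mu$ is additive on disjoint sets (being a mean, it is linear, and $\cchi_{Aq^{-1} \cup Bq^{-1}} = \cchi_{Aq^{-1}} + \cchi_{Bq^{-1}}$ when the two sets are disjoint), we get $\nu(A \cup B) = \nu(A) + \nu(B)$. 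Finally $\nu(S) = \mu(Sq^{-1})$; but $s^{-1}S = S \in q$ for every $s \in S$, so $Sq^{-1} = S$ and $\nu(S) = \mu(S) = 1$. Thus Lemma~\ref{content} applies and gives the unique mean extending $\nu$.

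For the left invariant case, I would use Lemma~\ref{s^{-1}A}: it suffices to show the mean extending $\nu$, call it $\bar\nu$, satisfies $\bar\nu(t^{-1}A) = \bar\nu(A)$ for every $t \in S$ and $A \subseteq S$. Since $t^{-1}A$ and $A$ are subsets of $S$, $\bar\nu$ agrees with $\nu$ on their characteristic functions, so it suffices to show $\nu(t^{-1}A) = \nu(A)$. By part~(2) of Lemma~\ref{lemma_algebraic_properties_of_ultrafilter_shift}, $t^{-1}(Aq^{-1}) = (t^{-1}A)q^{-1}$, so $\nu(t^{-1}A) = \mu\big((t^{-1}A)q^{-1}\big) = \mu\big(t^{-1}(Aq^{-1})\big)$. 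Now if $\mu$ is left invariant, Lemma~\ref{s^{-1}A} (the forward direction) gives $\mu(t^{-1}B) = \mu(B)$ for every $B \subseteq S$; applying this with $B = Aq^{-1}$ yields $\mu\big(t^{-1}(Aq^{-1})\big) = \mu(Aq^{-1}) = \nu(A)$. Hence $\nu(t^{-1}A) = \nu(A)$, and so $\bar\nu \in LIM(S)$ by Lemma~\ref{s^{-1}A}. Uniqueness of the left invariant extension follows a fortiori from the uniqueness of the mean extension already established.

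There is no serious obstacle here; the lemma is essentially an assembly of the preceding lemmas, and the only thing to be careful about is keeping straight which algebraic identities from Lemma~\ref{lemma_algebraic_properties_of_ultrafilter_shift} are needed for which property — parts~(3) and~(4) for finite additivity, part~(2) for left invariance — together with the trivial observation $Sq^{-1} = S$ for the normalization. The one conceptual point worth stating explicitly in the write-up is that a mean, when restricted to characteristic functions of subsets of $S$, is exactly a non-negative finitely additive set function with total mass $1$, so that Lemmas~\ref{content} and~\ref{s^{-1}A} are genuinely applicable to the set function $\nu$ rather than requiring a separate argument at the level of $l_\infty(S)$.
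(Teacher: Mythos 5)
Your proof is correct and follows essentially the same route as the paper: verify the hypotheses of Lemma~\ref{content} using Lemma~\ref{lemma_algebraic_properties_of_ultrafilter_shift} and the observation $Sq^{-1}=S$, then obtain left invariance from part~(2) of that lemma together with the characterization in Lemma~\ref{s^{-1}A}. The only difference is that you spell out the elementary verifications (additivity via characteristic functions, uniqueness of the invariant extension) in slightly more detail than the paper does.
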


\begin{proof}
The set function $\nu$ is real valued and non-negative. It follows from Lemma \ref{lemma_algebraic_properties_of_ultrafilter_shift} and the finite additivity of $\mu$ that $\nu$ is finitely additive.  Moreover, since $S q^{-1} = S$, we have that $\nu (S) = 1$.  It follows from Lemma \ref{content} that $\nu$ extends uniquely to a mean on $S$.  Abusing notation slightly, we denote this mean by $\nu$.

We wish to show that if $\mu$ is left invariant, then $\nu$ is left invariant.  Let $A \subseteq S$ and $s \in S$.  We see by Lemma \ref{lemma_algebraic_properties_of_ultrafilter_shift} that
$$\nu(s^{-1} A) = \mu\big( (s^{-1} A) q^{-1} \big) = \mu \big( s^{-1} (A q^{-1}) \big) = \mu (A q^{-1}) = \nu (A).$$
It follows by Lemma \ref{s^{-1}A} that $\nu$ is left invariant.
\end{proof}

 We present now one of the major results of this paper, namely that whenever $S$ is a left amenable
semigroup, $\Delta^*(S)$ is an ideal of $\beta S$. Part of that conclusion is very easy.

\begin{lemma}\label{Delta*left} Let $(S,\cdot)$ be a left amenable semigroup. 
Then $\Delta^*(S)$ is a left ideal of $\beta S$.
\end{lemma}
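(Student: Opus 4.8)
The plan is to show that $\Delta^*(S)$ is closed under left multiplication by arbitrary elements of $\beta S$: given $p \in \Delta^*(S)$ and $q \in \beta S$, I want to verify that $qp \in \Delta^*(S)$, i.e., every $A \in qp$ has $d^*(A) > 0$. The natural tool is Lemma \ref{lemma_algebraic_properties_of_ultrafilter_shift}(1), which says $A \in qp$ if and only if $A p^{-1} \in q$, together with Lemma \ref{lemma_ultrafilter_mean_shift}, which produces, from a left invariant mean, a new left invariant mean that ``shifts'' sets by an ultrafilter.

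Here is the sequence of steps. First, fix $p \in \Delta^*(S)$, fix $q \in \beta S$, and fix $A \in qp$; the goal is $d^*(A) > 0$. By Lemma \ref{lemma_algebraic_properties_of_ultrafilter_shift}(1), $A p^{-1} \in q$. I would like to relate $d^*(A)$ to the density of $A p^{-1}$ as seen by some mean. Since $A p^{-1} \in q$, it is nonempty, so pick $s \in A p^{-1}$, which means $s^{-1} A \in p$. Since $p \in \Delta^*(S)$, we have $d^*(s^{-1}A) > 0$, so pick a left invariant mean $\mu$ on $S$ with $\mu(s^{-1}A) > 0$. Now, because $\mu$ is left invariant, $\mu(s^{-1}A) = \mu(\cchi_A \circ \lambda_s) = \mu(A)$, so in fact $\mu(A) > 0$, and therefore $d^*(A) \geq \mu(A) > 0$. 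This shows $qp \in \Delta^*(S)$, so $\Delta^*(S)$ is a left ideal. (In fact one sees that it suffices to use left invariance directly, without invoking Lemma \ref{lemma_ultrafilter_mean_shift} at all — the point is just that membership in $\Delta^*(S)$ is inherited under the passage $A \mapsto s^{-1}A$, and left invariance says $d^*$ is unchanged by that passage.)

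I should double-check that $\Delta^*(S)$ is nonempty, so that it is a genuine left ideal: since $S$ is left amenable, $LIM(S) \neq \emptyset$, and the proof of Theorem \ref{weakstarcompact}-style compactness arguments (or simply choosing a maximal mean-value set) show there is an ultrafilter all of whose members have positive $d^*$; alternatively Lemma \ref{lemKinDelta} combined with $d \leq d^*$ gives $\cl K(\beta S) \subseteq \Delta(S) \subseteq \Delta^*(S)$ whenever $S$ satisfies SFC, and the general left amenable case follows since $d^*(A) > 0$ for $A$ piecewise syndetic. I would state the nonemptiness as a brief remark rather than dwell on it.

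The argument is almost entirely mechanical; there is no serious obstacle for the \emph{left} ideal claim — the only thing to be careful about is the direction of the ultrafilter-shift identity and the fact that left invariance of $\mu$ gives $\mu(s^{-1}A) = \mu(A)$ (Lemma \ref{s^{-1}A}). The genuinely hard part — showing $\Delta^*(S)$ is also a \emph{right} ideal, which is the problem open for decades — is not part of this lemma and will presumably be handled separately using Lemma \ref{lemma_ultrafilter_mean_shift} in its full strength (to push a left invariant mean across an arbitrary ultrafilter $q$ on the right).
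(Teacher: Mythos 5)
Your argument is correct and is essentially the paper's own proof: both pick $s$ with $s^{-1}A\in p$ from the fact that $\{s\in S:s^{-1}A\in p\}=Ap^{-1}\in q$, choose $\mu\in LIM(S)$ with $\mu(s^{-1}A)>0$, and conclude $d^*(A)\geq\mu(A)=\mu(s^{-1}A)>0$ by left invariance. Your closing observation that Lemma \ref{lemma_ultrafilter_mean_shift} is not needed here, and is reserved for the right-ideal part, matches the paper exactly.
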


\begin{proof} Let $p\in\Delta^*(S)$, let $q\in\beta S$, and let $A\in qp$.
Then $\{s\in S:s^{-1}A\in p\}\in q$ so pick $s\in S$ such that
$s^{-1}A\in p$. Then $d^*(s^{-1}A)>0$  so pick $\eta\in LIM(S)$ such that
$\eta(\cchi_{s^{-1}A})>0$. Since $\cchi_A\circ\lambda_s=\cchi_{s^{-1}A}$,
$\eta(\cchi_A)=\eta(\cchi_{s^{-1}A})$ so $d^*(A)>0$.\end{proof}

\begin{theorem}\label{Delta*ideal}  Let $(S,\cdot)$ be a left amenable semigroup. 
Then $\Delta^*(S)$ is a two sided ideal of $\beta S$.\end{theorem}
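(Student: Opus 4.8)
The plan is to show that $\Delta^*(S)$ is a right ideal of $\beta S$; combined with Lemma \ref{Delta*left}, this gives that it is a two-sided ideal. So fix $p \in \Delta^*(S)$ and $q \in \beta S$; I want to show $pq \in \Delta^*(S)$, i.e. that every $A \in pq$ has $d^*(A) > 0$. By Lemma \ref{lemma_algebraic_properties_of_ultrafilter_shift}(1), $A \in pq$ means $Aq^{-1} \in p$, and since $p \in \Delta^*(S)$ this gives $d^*(Aq^{-1}) > 0$. So pick $\mu \in LIM(S)$ with $\mu(\cchi_{Aq^{-1}}) = \mu(Aq^{-1}) > 0$.

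The key device is Lemma \ref{lemma_ultrafilter_mean_shift}: define $\nu$ on $\mathcal P(S)$ by $\nu(B) = \mu(Bq^{-1})$ for $B \subseteq S$; this extends uniquely to a left invariant mean on $S$ (using that $\mu$ is left invariant). Then $\nu(A) = \mu(Aq^{-1}) > 0$, so $d^*(A) \geq \nu(\cchi_A) > 0$. Hence every member of $pq$ has positive Banach density, so $pq \in \Delta^*(S)$, establishing that $\Delta^*(S)$ is a right ideal. Together with the left ideal property this completes the proof.

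One small point to be careful about: $\nu$ as produced by Lemma \ref{lemma_ultrafilter_mean_shift} is a left invariant mean on $S$ in the sense of $l_\infty(S)^*$, and the abuse of notation there means $\nu(\cchi_A) = \nu(A) = \mu(Aq^{-1})$ on characteristic functions, so $\nu(\cchi_A) > 0$ directly gives $A$ positive Banach density by Definition \ref{defBdensity}. There is no real obstacle here — the content has all been front-loaded into Lemmas \ref{lemma_algebraic_properties_of_ultrafilter_shift} and \ref{lemma_ultrafilter_mean_shift}, and the only work remaining is to assemble them, so the proof should be a few lines. If anything, the ``main obstacle'' was already dispatched earlier: constructing a left invariant mean witnessing positive density of $A$ from one witnessing positive density of the shifted set $Aq^{-1}$, which is exactly what Lemma \ref{lemma_ultrafilter_mean_shift} does via the extension machinery of Lemma \ref{content}.
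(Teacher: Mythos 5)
Your proposal is correct and follows essentially the same route as the paper: the paper's proof also fixes $p\in\Delta^*(S)$, $q\in\beta S$, $A\in pq$, notes $Aq^{-1}\in p$ has positive Banach density, and applies Lemma \ref{lemma_ultrafilter_mean_shift} to the shifted set function $\nu(B)=\mu(Bq^{-1})$ to produce a left invariant mean giving $A$ positive density, then invokes Lemma \ref{Delta*left} for the left ideal half. Nothing further is needed.
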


\begin{proof}  Let $p\in \Delta^*(S)$, let $q\in \beta S$, and let $A\in pq$.
 We shall show that $d^*(A)>0$. This will establish that
$pq\in \Delta^*(S)$ and hence that $\Delta^*(S)$ is a right ideal of $\beta(S)$.
It will follow then from Lemma \ref{Delta*left} that $\Delta^*(S)$ is a two-sided ideal of $\beta(S)$.

Let $P=Aq^{-1}$. Then $P\in p$ so $d^*(P)>0$, and we can 
choose $\mu\in LIM(S)$ for which $\mu(P)>0$. 

We define $\nu$ on ${\mathcal P}(S)$ by  
$\nu(B)=\mu(Bq^{-1})$ for every 
$B\subseteq S$. By Lemma \ref{lemma_ultrafilter_mean_shift}, $\nu$ extends uniquely to a 
member of $LIM(S)$. Now $P=Aq^{-1}$ so
$\nu(A)=\mu(Aq^{-1})=\mu(P)>0$. Therefore, $d^*(A) > 0$. \end{proof}

  We will need to use the following notion of size for a semigroup.
\begin{definition} \label{defthick} Let $(S,\cdot)$ be a semigroup. A set 
$A\subseteq S$ is {\it thick\/} if and only if for each $F\in\pf(S)$, there 
is some $x\in S$ such that $Fx\subseteq A$.
\end{definition}

The following theorem shows that thickness is characterized by having density 1 with respect to
each of the three notions of density considered in this paper.

\begin{theorem}\label{thick} Let $(S,\cdot)$ be a semigroup, and let $A$ be a subset of $S$.
\begin{itemize}
\item[(1)] $d_t(A)=1$ if and only if $A$ is thick.
\item[(2)] If $S$ is left amenable, $d^*(A)=1$ if and only if $A$ is thick.
\item[(3)] If $A$ satisfies  SFC, $d(A)=1$ if and only if $A$ is thick.
\end{itemize}
\end{theorem}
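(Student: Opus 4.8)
The plan is to prove the three equivalences by establishing a cycle of implications, using thickness as the pivot and leveraging the chain of inequalities $d(A)\le d^*(A)\le d_t(A)$ already established in the excerpt. First I would prove part (1), the purely combinatorial statement, directly from the definitions. For the forward direction of (1): suppose $d_t(A)=1$ and let $F\in\pf(S)$ be given. Since $d_t(A)=1$, for every $\alpha<1$ there is $s\in S$ with $|F\cap As^{-1}|\ge\alpha|F|$; taking $\alpha$ large enough (say $\alpha>1-\tfrac{1}{|F|}$) forces $F\cap As^{-1}=F$, i.e. $F\subseteq As^{-1}$, which means $Fs\subseteq A$. Hence $A$ is thick. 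For the reverse direction: if $A$ is thick, then given any $F\in\pf(S)$ there is $s\in S$ with $Fs\subseteq A$, so $F\subseteq As^{-1}$ and $|F\cap As^{-1}|=|F|\ge\alpha|F|$ for every $\alpha\in[0,1]$; thus $d_t(A)=1$.

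Next I would handle the ``thick implies density $1$'' directions for $d^*$ and $d$ at one stroke. The cleanest route is: thick $\Rightarrow d(A)=1$ (when $S$ satisfies SFC) and thick $\Rightarrow d^*(A)=1$ (when $S$ is left amenable). For $d^*$: given $\mu\in LIM(S)$ and $\epsilon>0$, I want $\mu(A)\ge 1-\epsilon$; I would argue that thickness makes the complement $S\setminus A$ ``not large,'' but since $d^*$ and $\mu$ need not see finite translates directly, the smoother argument is to observe that a thick set meets every right ideal—indeed, more is true, and the key point is that thickness is preserved and interacts with left translation. Actually the most efficient approach: for a thick set $A$, the family $\{As^{-1}:s\in S\}$ has the property that finitely many of its members cover any prescribed finite set, and one can run the argument of Lemma~\ref{lemRnotemp} in reverse, or simply invoke that a thick set is the complement of a set $B$ with $d_t(B)=0$. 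The subadditivity-type reasoning gives $d^*(A)\ge 1-d_t(S\setminus A)$; but by part (1), if $A$ is thick it is not immediately clear $S\setminus A$ has $d_t=0$—wait, that is exactly what fails for $d_t$ in general. So I would instead argue directly: for each $F\in\pf(S)$ pick $s_F$ with $Fs_F\subseteq A$; then for $\mu\in LIM(S)$, using left invariance, $\mu(A)\ge\mu(As_F^{-1})\ge \mu(\{t:t\in As_F^{-1}\})$ and since $F\subseteq As_F^{-1}$ one gets $\mu(A)=\mu(A s_F^{-1} s_F^{-1}\cdots)$—this needs care; the honest version is to use that $\mu(As_F^{-1})=\mu(A)$ by Lemma~\ref{s^{-1}A} applied to $\lambda_{s_F}$... no, $As^{-1}$ is a right translate. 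Hence I expect the genuine work here is: show a thick set $A$ satisfies $\mu(A)=1$ for every $\mu\in LIM(S)$ by showing $S\setminus A$ is contained in no set of positive $\mu$-measure, via the fact that a thick set's complement is ``nowhere thick'' in a suitable sense; alternatively, and most cleanly, invoke $\cl K(\beta S)\subseteq\overline A$ for thick $A$ (thick sets contain translates of every finite set, hence belong to every minimal left ideal's closure) together with Lemma~\ref{lemKinDelta}-style reasoning—but that only gives positivity, not $1$.

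Given these subtleties, the structure I would actually commit to is a three-cycle: (a) thick $\Rightarrow d(A)=1$; (b) $d(A)=1\Rightarrow d^*(A)=1\Rightarrow d_t(A)=1$ (immediate from $d\le d^*\le d_t$ and each being $\le 1$); (c) $d_t(A)=1\Rightarrow A$ thick (part (1), forward direction, done above); plus the easy converse thick $\Rightarrow d_t(A)=1$. For step (a), given $H\in\pf(S)$ and $\epsilon>0$, I need $K\in\pf(S)$ with $|K\setminus sK|<\epsilon|K|$ for all $s\in H$ and $|A\cap K|\ge(1-\epsilon)|K|$: start with any Følner set $K_0$ witnessing the SFC condition for $(H,\epsilon)$, then since $A$ is thick pick $x$ with $K_0 x\subseteq A$, and take $K=K_0 x$. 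The Følner estimate is preserved under right translation provided right translation is injective on the relevant sets—and here I hit the one real obstacle: right multiplication $\rho_x$ need not be injective, so $|K_0 x|$ could be smaller than $|K_0|$ and $|K_0 x\setminus sK_0 x|$ is not obviously controlled by $|K_0\setminus sK_0|$. I expect this to be \emph{the} main obstacle, and I would resolve it exactly as in the excerpt's earlier arguments about quotients: either note that one may pass to $K$ with $\rho_x$ injective on it by a pigeonhole/refinement argument, or—cleaner—use that thickness lets us choose, for the given $H$ and $\epsilon$, first a Følner set and then absorb a translate while only improving the density ratio, checking that $sK_0 x\supseteq (sK_0)x$ so that $K_0 x\setminus sK_0 x\subseteq (K_0\setminus sK_0)x$ and hence $|K_0x\setminus sK_0x|\le|K_0\setminus sK_0|$, while $|K_0x|$ might only be smaller—but then one rescales $\epsilon$ at the outset using a lower bound $|K_0 x|\ge |K_0|/\prod$... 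This is the delicate bookkeeping; I would carry a factor through and choose the initial $\epsilon'$ small enough that everything survives. Once (a) is in place, (b) is a one-line consequence of the displayed chain $d\le d^*\le d_t\le 1$, and the proof closes.
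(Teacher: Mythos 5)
Your part (1) is correct and is essentially the paper's argument. The problems are in how you propose to get the ``thick $\Rightarrow$ density $1$'' directions of (2) and (3). First, your cycle routes part (2) through F\o lner density: thick $\Rightarrow d(A)=1 \Rightarrow d^*(A)=1$. But (2) is asserted for arbitrary left amenable semigroups, and left amenability does not imply SFC (Klawe has counterexamples), so $d$ need not even be defined there; the chain $d\le d^*\le d_t$ is only available under SFC. You sensed the need for a direct argument that a thick set has full measure for some left invariant mean, and you correctly noticed that left invariance gives no control over the right translates $As^{-1}$, but you abandoned the attempt without a resolution. The paper's resolution is exactly the missing idea: since $A$ is thick, the family $\{s^{-1}A:s\in S\}$ has the finite intersection property, so it lies in some ultrafilter $q$, and then $Aq^{-1}=S$; defining $\nu(B)=\mu(Bq^{-1})$ for any $\mu\in LIM(S)$ and invoking Lemma \ref{lemma_ultrafilter_mean_shift} produces a left invariant mean with $\nu(A)=\mu(S)=1$. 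Nothing in your sketch substitutes for this step.

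Second, your step (a), thick $\Rightarrow d(A)=1$ under SFC, is not actually proved. Taking a $(H,\epsilon)$-F\o lner set $K_0$ and replacing it by $K_0x$ with $K_0x\subseteq A$ fails precisely where you feared: $\rho_x$ need not be injective, so $|K_0x|$ admits no lower bound in terms of $|K_0|$ (in a right zero semigroup $|K_0x|=1$ no matter how large $K_0$ is), and therefore the estimate $|K_0x\setminus sK_0x|\le|K_0\setminus sK_0|<\epsilon|K_0|$ gives no bound of the form $\epsilon'|K_0x|$; there is no factor to ``carry through'' and no rescaling of $\epsilon$ that repairs this. Overcoming the lack of right cancellation is the real content here: the paper does not reprove (3) inside Theorem \ref{thick} at all, but cites the earlier work of Hindman and Strauss, and notes it also follows from Theorem \ref{d=dstar}, whose proof handles exactly this obstacle by passing to the cancellative quotient $T=S/\!\sim$, pulling a F\o lner set of $T$ back through a choice function, and multiplying by elements of a suitable right ideal $R$ on which the relevant right translations become injective. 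Your ``pigeonhole/refinement'' suggestion is not developed and does not obviously work; as written, the proposal leaves both the SFC case and the general left amenable case of ``thick $\Rightarrow$ density $1$'' unproved.
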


\begin{proof} (1) Assume that $A$ is thick. Then, for every 
finite subset $F$ of $S$, there exists $s\in S$ such that $Fs\subseteq A$
so that $|F\cap As^{-1}|=|F|$.

If $A$ is not thick, there is a finite subset $F$ of $S$ such that, 
for every $s\in S$, $Fs\not\subseteq A$. So, for every $s\in F$,  $|F \cap As^{-1}|\leq  |F|-1$
and $d_t(A)\leq \frac{|F|-1}{|F|}<1$.

(2) If $d^*(A)=1$, then by Corollary \ref{dtgeqdstar}, $d_t(A)=1$, so (1) applies.

Now assume that $A$ is thick. The family $\mathcal{F} = \{s^{-1} A : s \in S\}$ 
has the finite intersection property and 
hence is contained in some ultrafilter $q \in \beta S$.  By the 
choice of $q$, we have that $Aq^{-1} = S$. Pick $\mu\in LIM(S)$, and
define $\nu:{\mathcal P}(S)\to\ber$ by $\nu(B)=\mu(Bq^{-1})$ for all $B \subseteq S$.
By Lemma \ref{lemma_ultrafilter_mean_shift}, $\nu$ extends uniquely to a 
member of $LIM(S)$ and $\nu(A)=\mu(S)=1$, so $d^*(A)=1$.

(3) This statement was proved in \cite[Theorems 2.4 and 3.4]{HSc}. It is also a 
consequence of the fact that $d(A)=d^*(A)=d_t(A)$ for every subset $A$ of $S$, 
which we shall prove in Theorem \ref{d=dstar}. \end{proof}

\section{Density and quotients}
\label{sec_density_and_quotients}

In this section, we investigate quotients of left amenable semigroups, paying special
attention to the {\it left cancellative quotient\/} produced by 
Lemma \ref{lemquotient}. And we establish that $d=d^*$ in any semigroup satisfying
SFC.

\begin{definition}\label{defnuh} Let $(S,\cdot)$ be a left amenable semigroup, and
let $h$ be a homomorphism from $S$ onto a semigroup $(T,\cdot)$. For $\nu\in l_\infty(S)^*$, define
$\nu_h\in l_\infty(T)^*$ by $\nu_h(g)=\nu(g\circ h)$ for each
$g\in l_\infty(T)$.
\end{definition}

\begin{lemma}\label{lemnusubh}  Let $(S,\cdot)$ be a left amenable semigroup,
let $h$ be a homomorphism from $S$ onto a semigroup $(T,\cdot)$, and let $\nu\in LIM(S)$. 
Then $\nu_h\in LIM(T)$.\end{lemma}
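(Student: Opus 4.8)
The plan is to verify directly that $\nu_h$ is a left invariant mean on $T$, using the definition $\nu_h(g) = \nu(g \circ h)$ together with the fact that $h$ is a surjective homomorphism. First I would check that $\nu_h$ is a mean. Linearity of $\nu_h$ is immediate from linearity of $\nu$ and the fact that $(cg_1 + g_2) \circ h = c(g_1 \circ h) + g_2 \circ h$. Positivity is equally easy: if $g \geq 0$ on $T$, then $g \circ h \geq 0$ on $S$, so $\nu(g \circ h) \geq 0$. For the norm condition, since $\chi_T \circ h = \chi_S$, we get $\nu_h(\chi_T) = \nu(\chi_S) = 1$, and since $\nu_h \geq 0$, Lemma \ref{inMNS} gives $\|\nu_h\| = 1$, so $\nu_h \in MN(T)$.

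Next I would verify left invariance. Fix $t \in T$ and $g \in l_\infty(T)$; I need $\nu_h(g \circ \lambda_t) = \nu_h(g)$. Since $h$ is onto, pick $s \in S$ with $h(s) = t$. The key computation is that $(g \circ \lambda_t) \circ h = (g \circ h) \circ \lambda_s$ as functions on $S$: indeed, for $x \in S$, $\big((g \circ \lambda_t) \circ h\big)(x) = g(t \cdot h(x)) = g(h(s) h(x)) = g(h(sx)) = (g \circ h)(sx) = \big((g \circ h) \circ \lambda_s\big)(x)$, where the middle equality uses that $h$ is a homomorphism. Therefore
$$\nu_h(g \circ \lambda_t) = \nu\big((g \circ \lambda_t) \circ h\big) = \nu\big((g \circ h) \circ \lambda_s\big) = \nu(g \circ h) = \nu_h(g),$$
the third equality being left invariance of $\nu$. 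Hence $\nu_h \in LIM(T)$.

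There is no real obstacle here; the only point requiring the surjectivity hypothesis is the choice of a preimage $s$ of $t$, and the only point requiring that $h$ be a homomorphism is the identity $h(sx) = h(s)h(x)$ inside the displayed chain. All the remaining steps are routine checks against the definitions of mean and left invariant mean given in Section 1.
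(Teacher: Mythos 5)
Your argument is correct and follows essentially the same route as the paper: the left invariance check via the identity $g\circ\lambda_t\circ h = g\circ h\circ\lambda_s$ for a preimage $s$ of $t$ is exactly the paper's computation. The only cosmetic difference is in verifying that $\nu_h$ is a mean, where the paper uses $\|g\circ h\|_\infty=\|g\|_\infty$ (which needs surjectivity of $h$) while you use positivity together with $\nu_h(\cchi_T)=\nu(\cchi_S)=1$ and Lemma \ref{inMNS}; both are fine.
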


\begin{proof}  Given $g\in l_\infty(T)$, $||g\circ h||_\infty=||g||_\infty$,
so $\nu_h$ is a mean. To see that $\nu_h$ is left invariant, let $g\in l_\infty(T)$
and let $x\in T$. Pick $s\in S$ such that $h(s)=x$.
Then $g\circ \lambda_x\circ h=g\circ h\circ\lambda_s$ so
$\nu_h(g\circ\lambda_x)=\nu(g\circ \lambda_x\circ h)=\nu(g\circ h\circ\lambda_s)
=\nu(g\circ h)=\nu_h(g)$.
\end{proof}

\begin{theorem}\label{thmnusurj} Let $(S,\cdot)$ be a left amenable semigroup,
let $h$ be a homomorphism from $S$ onto a semigroup $(T,\cdot)$, and let $\mu\in LIM(T)$. There exists 
$\nu\in LIM(S)$ such that $\nu_h=\mu$. \end{theorem}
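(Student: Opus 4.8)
The plan is to produce $\nu$ in two moves: first lift $\mu$ to \emph{some} mean on $S$ that pushes forward to $\mu$ (ignoring invariance), and then average that lift over left translations using a left invariant mean on $S$, which exists since $S$ is left amenable. The averaging forces invariance, and --- this is the point --- it leaves the push-forward unchanged precisely because $\mu$ is itself left invariant.

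First I would build the crude lift. Since $h$ is onto, fix a section $\sigma\colon T\to S$ with $h\circ\sigma=\mathrm{id}_T$ and set $\nu_0(A)=\mu(\sigma^{-1}[A])$ for $A\subseteq S$. This is a non-negative finitely additive set function with $\nu_0(S)=\mu(T)=1$, so by Lemma \ref{content} it extends uniquely to a mean on $S$, again called $\nu_0$. Evaluating on characteristic functions, $(\nu_0)_h(\cchi_B)=\nu_0\big(h^{-1}[B]\big)=\mu\big((h\circ\sigma)^{-1}[B]\big)=\mu(\cchi_B)$ for every $B\subseteq T$; since $(\nu_0)_h$ and $\mu$ are continuous, linear, and agree on the finite-range functions, Lemma \ref{Edense} gives $(\nu_0)_h=\mu$. (Equivalently, one could take $\nu_0$ to be any Hahn--Banach extension of the norm-one functional $g\circ h\mapsto\mu(g)$ on the subspace $\{g\circ h:g\in l_\infty(T)\}$ of $l_\infty(S)$.)

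Next I would average. Fix $m\in LIM(S)$. For $f\in l_\infty(S)$ the function $\varphi_f\colon s\mapsto\nu_0(f\circ\lambda_s)$ is bounded by $\|f\|_\infty$, hence lies in $l_\infty(S)$, so $\nu(f):=m(\varphi_f)$ is well defined, linear, and non-negative, and $\nu(\cchi_S)=m(s\mapsto 1)=1$, so $\nu$ is a mean. Since $\varphi_{f\circ\lambda_t}(s)=\nu_0(f\circ\lambda_{ts})=\varphi_f(ts)$, we have $\varphi_{f\circ\lambda_t}=\varphi_f\circ\lambda_t$, whence $\nu(f\circ\lambda_t)=m(\varphi_f\circ\lambda_t)=m(\varphi_f)=\nu(f)$ by left invariance of $m$; thus $\nu\in LIM(S)$. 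Finally, because $h$ is a homomorphism, $h\circ\lambda_s=\lambda_{h(s)}\circ h$, so $\varphi_{g\circ h}(s)=(\nu_0)_h\big(g\circ\lambda_{h(s)}\big)=\mu\big(g\circ\lambda_{h(s)}\big)=\mu(g)$ for every $g\in l_\infty(T)$ and every $s$, using $(\nu_0)_h=\mu$ and the left invariance of $\mu$. Hence $\varphi_{g\circ h}$ is constant and $\nu_h(g)=m(\varphi_{g\circ h})=\mu(g)$, i.e.\ $\nu_h=\mu$, as required.

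The hard part is not constructing a lift --- that is immediate --- but arranging that a single mean is simultaneously left invariant and a preimage of $\mu$; the left invariant mean $m$ on $S$ accomplishes this by averaging, and the only reason the averaging does not spoil $(\nu_0)_h=\mu$ is that the left invariance of $\mu$ collapses each function $\varphi_{g\circ h}$ to a constant. Conceptually this is Day's fixed-point theorem applied to the weak* compact convex set $\{\eta\in MN(S):\eta_h=\mu\}$ under the affine $S$-action $\tau_s(\eta)(f)=\eta(f\circ\lambda_s)$, which carries that set into itself by the same homomorphism identity used above; but the explicit averaging makes invoking the fixed-point theorem unnecessary.
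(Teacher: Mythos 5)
Your proof is correct, and it takes a genuinely different route from the paper's. The paper works with the subspace $E=\{g\circ h:g\in l_\infty(T)\}$, defines $\eta(g\circ h)=\mu(g)$, extends it by Hahn--Banach, shows the set $X$ of norm-one extensions consists of means and is weak* compact and convex, and then invokes Day's Fixed Point Theorem for the action $s*\rho(f)=\rho(f\circ\lambda_s)$ to extract an invariant element of $X$. You instead build an explicit lift $\nu_0$ from a section $\sigma$ of $h$ via Lemma \ref{content} (checking $(\nu_0)_h=\mu$ on characteristic functions and upgrading by Lemma \ref{Edense}), and then average: $\nu(f)=m\big(s\mapsto\nu_0(f\circ\lambda_s)\big)$ with $m\in LIM(S)$. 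The verifications are all sound: $\varphi_{f\circ\lambda_t}=\varphi_f\circ\lambda_t$ gives left invariance of $\nu$, and the identity $h\circ\lambda_s=\lambda_{h(s)}\circ h$ together with the left invariance of $\mu$ collapses $\varphi_{g\circ h}$ to the constant $\mu(g)$, so the averaging does not disturb the push-forward. Note that both arguments pivot on the same observation in different clothing --- the paper needs $\eta(g\circ\lambda_s)=\eta(g)$ on $E$ to satisfy the hypotheses of Day's theorem, while you need $\varphi_{g\circ h}$ constant --- and both use left amenability of $S$, the paper through Day's theorem and you through the existence of $m$. What your version buys is economy: no Hahn--Banach step (positivity comes for free from Lemma \ref{content}), no verification that $X$ is compact, convex, and consists of means, and no appeal to the fixed-point theorem; the trade-off is reliance on a choice of section, which is harmless here since choice is used freely elsewhere in the paper.
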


\begin{proof}  Let $E=\{f\circ h:f\in l_\infty(T)\}$. Then $E$ is a linear 
subspace of $l_\infty(S)$. Define
$\eta:E\to\ber$ by $\eta(f\circ h)=\mu(f)$, noting that $\eta$ is well
defined.  Note also that if $g\in E$ and $g(s)\geq 0$ for all $s\in S$,
then $\eta(g)\geq 0$. Since $\cchi_S=\cchi_T\circ h$ we have that 
$\eta(\cchi_S)=\mu(\cchi_T)=1$ so $||\eta||=1$. 

We need to produce $\nu\in LIM(S)$ which agrees with $\eta$ on $E$.

We claim that, for every $g\in E$ and every $s\in S$, 
$g\circ \lambda_s\in E$ and $\eta(g\circ \lambda_s)=\eta(g)$. To see this,
let $t=h(s)$ and let $g=f\circ h$, where $f\in l_{\infty}(T)$. Observe that 
$(f\circ h\circ \lambda_s)(x)=f\big(h(sx)\big)=(f\circ \lambda_t\circ h)(x)$ for every $x\in S$. 
So $g\circ \lambda_s=f\circ \lambda_t\circ h$. Then $g\circ \lambda_s\in E$ and
$\eta(g\circ \lambda_s)=\mu(f\circ \lambda_t)=\mu(f)=\eta(g)$. 

By \cite[Theorem B.14]{HR} (a version of the Hahn-Banach Theorem), there 
is an extension $\widetilde\eta$ of $\eta$ to $l_\infty(S)$ with $||\widetilde\eta||=1$.

Let $X=\{\rho\in l_\infty(S)^*:||\rho||=1$ and $(\forall g\in E)\big(\rho(g)=\eta(g)\big)\}$.
Then $\widetilde\eta\in X$ so $X\neq\emp$.
We claim that if $\rho\in X$, $\rho\geq 0$ so that the members of $X$ are all means. 
To see this, suppose that $\rho(g)<0$ for some $g\geq 0$ in $l_{\infty}(S)$. We may suppose that
$||g||\leq 1$. Then $|| \cchi_S-g||\leq 1$ and $\rho(\cchi_S-g)>1$, 
contradicting the assumption that $||\rho||=1$.

For $s\in S$ and $\rho\in X$, we define $s*\rho$ in $X$ by $s*\rho(g)=\rho(g\circ\lambda_s)$
for each $g\in l_\infty(S)$.  We want to apply Day's Fixed Point Theorem \cite[Theorem 1.14]{P}.
The conclusion of that theorem is that there is some $\nu\in X$ such that
for each $s\in S$, $s*\nu=\nu$. Then for each $g\in l_\infty(S)$ and each
$s\in S$, $\nu(g\circ\lambda_s)=\nu(g)$, so that $\nu$ is a left invariant mean on $S$.
Since $\nu\in X$, for each $f\in l_\infty(T)$, $\nu_h(f)=\nu(f\circ h)=\eta(f\circ h)=\mu(f)$, as 
required.

To apply Day's Fixed Point Theorem, we need to show
\begin{itemize}
\item[(1)] for $s\in S$ and $\rho\in X$, $s*\rho\in X$;
\item[(2)] for $s,t\in S$ and $\rho\in X$, $s*(t*\rho)=(st)*\rho$;
\item[(3)] $X$ is compact in $\bigtimes_{f\in l_\infty(S)}[-||f||_\infty,||f||_\infty]$;
\item[(4)] $X$ is convex; and 
\item[(5)] for $\alpha\in [0,1]$, $\rho_1$ and $\rho_2$ in $X$, and $s\in S$,
$s*(\alpha\rho_1+(1-\alpha)\rho_2)=\alpha(s*\rho_1)+(1-\alpha)(s*\rho_2)$.
\end{itemize}

For (1), we need that $||s*\rho||=1$ and for each $g\in E$, $(s*\rho)(g)=\eta(g)$.
Since $||s*\rho||\leq||\rho||=1$ and $s*\rho(\cchi_S)=\rho(\cchi_S)=1$ we have that 
$||s*\rho||=1$. Given $g\in E$, since $g\circ\lambda_s\in E$, $(s*\rho)(g)=\rho(g\circ\lambda_s)=
\eta(g\circ\lambda_s)=\eta(g)$, whereby $s*\rho \in X$.

For (2), let $g\in l_\infty(S)$. Then $s*(t*\rho)(g)=(t*\rho)(g\circ\lambda_s)
=\rho(g\circ\lambda_s\circ\lambda_t)=\rho(g\circ\lambda_{st})=(st)*\rho(g)$.

It is a routine exercise to establish (3). To verify (4), let
$n\in\ben$,  let $\langle c_i\rangle_{i=1}^n$ be elements of $[0,1]$,
such that $\sum_{i=1}^nc_i=1$, and let $\langle \rho_i\rangle_{i=1}^n$ be elements of $X$.
Given $g\in E$, $(\sum_{i=1}^n c_i \rho_i)(g)=
(\sum_{i=1}^n c_i \rho_i(g))=(\sum_{i=1}^n c_i \eta(g))
=(\sum_{i=1}^n c_i) \eta(g)=\eta(g)$. And, since the norm is additive on the
set of means, 
$||\sum_{i=1}^n c_i \rho_i||=\sum_{i=1}^n c_i ||\rho_i||=1$.

Finally, the verification of (5) is a routine evaluation.
\end{proof}

\begin{theorem}\label{thmdB} Let $(S,\cdot)$ be a left amenable semigroup,
let $h$ be a homomorphism from $S$ onto a semigroup $(T,\cdot)$, and let $B\subseteq T$. Then $d^*(B)=d^*(h^{-1}[B])$.
\end{theorem}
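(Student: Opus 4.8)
The plan is to prove the two inequalities $d^*(B) \geq d^*(h^{-1}[B])$ and $d^*(B) \leq d^*(h^{-1}[B])$ separately, using the correspondence between left invariant means on $S$ and $T$ established in Lemmas \ref{lemnusubh} and \ref{thmnusurj}. The key observation is that $\cchi_{h^{-1}[B]} = \cchi_B \circ h$, so that for any $\nu \in l_\infty(S)^*$ we have $\nu(\cchi_{h^{-1}[B]}) = \nu(\cchi_B \circ h) = \nu_h(\cchi_B)$.

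First I would show $d^*(h^{-1}[B]) \leq d^*(B)$. Let $\nu \in LIM(S)$ be arbitrary. By Lemma \ref{lemnusubh}, $\nu_h \in LIM(T)$, and by the observation above, $\nu(\cchi_{h^{-1}[B]}) = \nu_h(\cchi_B) \leq d^*(B)$. Taking the supremum over all $\nu \in LIM(S)$ gives $d^*(h^{-1}[B]) \leq d^*(B)$.

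Next I would show $d^*(B) \leq d^*(h^{-1}[B])$. Let $\mu \in LIM(T)$ be arbitrary. By Theorem \ref{thmnusurj}, there exists $\nu \in LIM(S)$ with $\nu_h = \mu$. Then $\mu(\cchi_B) = \nu_h(\cchi_B) = \nu(\cchi_B \circ h) = \nu(\cchi_{h^{-1}[B]}) \leq d^*(h^{-1}[B])$. Taking the supremum over all $\mu \in LIM(T)$ gives $d^*(B) \leq d^*(h^{-1}[B])$, and combining the two inequalities yields $d^*(B) = d^*(h^{-1}[B])$.

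This argument is essentially routine once the machinery of Lemmas \ref{lemnusubh} and \ref{thmnusurj} is in place; there is no real obstacle. The only point requiring care is the verification that $\cchi_{h^{-1}[B]} = \cchi_B \circ h$, which is immediate from the definition of preimage: $s \in h^{-1}[B]$ iff $h(s) \in B$ iff $\cchi_B(h(s)) = 1$. All the heavy lifting—in particular the use of Day's Fixed Point Theorem to produce a left invariant mean on $S$ lifting a given one on $T$—is already contained in Theorem \ref{thmnusurj}.
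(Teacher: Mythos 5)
Your proposal is correct and follows essentially the same route as the paper: both rest on the identity $\cchi_{h^{-1}[B]}=\cchi_B\circ h$ together with Lemma \ref{lemnusubh} for one inequality and Theorem \ref{thmnusurj} for the other. The only difference is presentational, since the paper phrases the two inequalities as contradictions with an intermediate $\delta$ while you argue directly by taking suprema.
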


\begin{proof} Suppose first that we have some $\delta$ such that
$d^*(B)>\delta>d^*(h^{-1}[B])$. By Theorem \ref{thmnusurj}, pick $\mu\in LIM(T)$ such that
$\mu(\cchi_B)>\delta$ and pick $\nu\in LIM(S)$ such that
$\nu_h=\mu$. Then $\delta< \mu(\cchi_B)=\nu(\cchi_B\circ h)=\nu(\cchi_{h^{-1}[B]})\leq d^*(h^{-1}[B])$,
a contradiction. 

Now suppose we have some $\delta$ such that $d^*(h^{-1}[B])>\delta>d^*(B)$
and pick $\nu\in LIM(S)$ such that
$\nu(\cchi_{h^{-1}[B]})>\delta$. Then $d^*(B)\geq \nu_h(\cchi_B)>\delta$, a contradiction.
\end{proof}

\begin{theorem}\label{thmdhA} Let $(S,\cdot)$ be a left amenable semigroup,
let $h$ be a homomorphism from $S$ onto a semigroup $(T,\cdot)$, and let $A\subseteq S$.
Then $d^*(h[A])\geq d^*(A)$.
\end{theorem}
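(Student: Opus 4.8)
The plan is to reduce the claim to the already-established equality $d^*(B)=d^*(h^{-1}[B])$ of Theorem~\ref{thmdB} together with the obvious monotonicity of Banach density. First I would record that $d^*$ is monotone: if $C\subseteq D\subseteq S$, then $\cchi_C\leq\cchi_D$, so for every mean (in particular every left invariant mean) $\nu$ on $S$ one has $\nu(\cchi_C)\leq\nu(\cchi_D)$, and taking suprema gives $d^*(C)\leq d^*(D)$. This is immediate from Definition~\ref{defBdensity} and the fact that members of $LIM(S)$ are non-negative functionals.

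Next I would apply Theorem~\ref{thmdB} with $B=h[A]\subseteq T$ to get $d^*(h[A])=d^*\big(h^{-1}[h[A]]\big)$. Since $A\subseteq h^{-1}[h[A]]$, monotonicity yields $d^*\big(h^{-1}[h[A]]\big)\geq d^*(A)$, and combining the two gives $d^*(h[A])\geq d^*(A)$, as desired.

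Alternatively, if one prefers a self-contained argument not routing through Theorem~\ref{thmdB}: given $\epsilon>0$, pick $\nu\in LIM(S)$ with $\nu(\cchi_A)>d^*(A)-\epsilon$. By Lemma~\ref{lemnusubh}, $\nu_h\in LIM(T)$, so $d^*(h[A])\geq\nu_h(\cchi_{h[A]})=\nu(\cchi_{h[A]}\circ h)=\nu(\cchi_{h^{-1}[h[A]]})\geq\nu(\cchi_A)>d^*(A)-\epsilon$, using $\cchi_{h[A]}\circ h=\cchi_{h^{-1}[h[A]]}\geq\cchi_A$ and positivity of $\nu$. Letting $\epsilon\to 0$ finishes the proof.

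I do not anticipate any real obstacle here; the statement is a short consequence of monotonicity of $d^*$ and either Theorem~\ref{thmdB} or Lemma~\ref{lemnusubh}. The only point worth stating carefully is the set-theoretic identity $\cchi_{h[A]}\circ h=\cchi_{h^{-1}[h[A]]}$ and the inclusion $A\subseteq h^{-1}[h[A]]$ (which can be proper, which is why equality need not hold in general).
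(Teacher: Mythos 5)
Your main argument is correct and is essentially the paper's own proof: the paper likewise combines the inclusion $A\subseteq h^{-1}\big[h[A]\big]$ (hence monotonicity of $d^*$) with Theorem~\ref{thmdB} applied to $B=h[A]$. Your alternative route through Lemma~\ref{lemnusubh} is also valid, but it is not needed beyond the first argument.
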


\begin{proof} Since $A\subseteq h^{-1}\big[h[A]\big]$, by Theorem \ref{thmdB},
$d^*(A)\leq d^*\big(h^{-1}\big[h[A]\big]\big)=\\
d^*(h[A])$.  \end{proof}

Denote by $\widetilde h: \beta S \to \beta T$ the continuous extension of $h$. Note that by 
\cite[Corollary 4.22]{HS}, $\widetilde h$ is a homomorphism of $\beta S$ onto $\beta T$.

\begin{theorem}\label{thmdeltastar} Let $(S,\cdot)$ be a left amenable semigroup, and
let $h$ be a homomorphism from $S$ onto a semigroup $(T,\cdot)$.  Then $\widetilde h[\Delta^*(S)]=\Delta^*(T)$.
\end{theorem}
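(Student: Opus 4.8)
The plan is to establish the two inclusions $\widetilde h[\Delta^*(S)]\subseteq\Delta^*(T)$ and $\Delta^*(T)\subseteq\widetilde h[\Delta^*(S)]$ separately, using Theorem \ref{thmdB} (the equality $d^*(B)=d^*(h^{-1}[B])$) as the main engine together with the standard fact that, for a surjective homomorphism $h:S\to T$, its extension $\widetilde h:\beta S\to\beta T$ satisfies $B\in\widetilde h(p)$ if and only if $h^{-1}[B]\in p$, for $p\in\beta S$ and $B\subseteq T$ (this is the usual description of $\widetilde h$ via preimages).

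For the forward inclusion, I would take $p\in\Delta^*(S)$, set $r=\widetilde h(p)$, and let $B\in r$ be arbitrary; then $h^{-1}[B]\in p$, so $d^*\big(h^{-1}[B]\big)>0$, and by Theorem \ref{thmdB} this equals $d^*(B)$, hence $d^*(B)>0$. Since $B\in r$ was arbitrary, $r\in\Delta^*(T)$, giving $\widetilde h[\Delta^*(S)]\subseteq\Delta^*(T)$.

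For the reverse inclusion, let $r\in\Delta^*(T)$. Since $\widetilde h$ is onto $\beta T$ (by \cite[Corollary 4.22]{HS}), the fiber $\widetilde h^{-1}[\{r\}]$ is a nonempty (and, being closed, compact) subset of $\beta S$; I must find a point of it lying in $\Delta^*(S)$. The natural approach is to consider the family $\mathcal{A}=\{h^{-1}[B]:B\in r\}$ and any sets of positive $d^*$-density that one wishes to force into the ultrafilter: note $\widetilde h^{-1}[\{r\}]=\bigcap_{B\in r}\cl_{\beta S}\big(h^{-1}[B]\big)$, so a point $p$ of the fiber lies in $\Delta^*(S)$ precisely when every member of $p$ has positive $d^*$. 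Here is where I expect the main obstacle: it is not automatic that some $p$ in the fiber has \emph{all} its members of positive density, because $p$ may be forced to contain sets not of the form $h^{-1}[B]$. To handle this I would argue that $\Delta^*(S)\cap\widetilde h^{-1}[\{r\}]\neq\emptyset$ by a compactness/ideal argument: $\Delta^*(S)$ is a two-sided ideal of $\beta S$ by Theorem \ref{Delta*ideal}, hence $\cl K(\beta S)\subseteq\Delta^*(S)$; meanwhile $\widetilde h^{-1}[\{r\}]$ is a compact right topological subsemigroup of $\beta S$ (a fiber of a homomorphism over a point is closed under multiplication since $\widetilde h$ is a homomorphism and $\{r\}$ need not be a subsemigroup of $\beta T$ — so instead I would use that $\widetilde h^{-1}[\beta T r\,]$ or, cleaner, pick any minimal left ideal $L$ of $\beta S$ and observe $\widetilde h[L]$ is a minimal left ideal of $\beta T$, so meets $\beta T r$; more directly, since $\widetilde h$ maps $K(\beta S)$ onto $K(\beta T)$, and $r\in\Delta^*(T)\supseteq\cl K(\beta T)$ is not guaranteed).

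Let me streamline: the cleanest route for the reverse inclusion is to avoid algebraic structure and argue directly. Given $r\in\Delta^*(T)$, choose $\mu\in LIM(T)$ (possible since $T$ is left amenable as a homomorphic image of the left amenable $S$), and by Theorem \ref{thmnusurj} lift it to $\nu\in LIM(S)$ with $\nu_h=\mu$. Now I want an ultrafilter $p\supseteq\{h^{-1}[B]:B\in r\}\cup\{A\subseteq S: d^*(A)>0,\ h^{-1}[B]\setminus A \text{ small in a suitable sense}\}$ — but the honest and correct approach is: show the family $\{h^{-1}[B]:B\in r\}$ together with $\{A:d^*(A)=1\}$ has the finite intersection property and, more, that it extends to an ultrafilter $p$ all of whose members have positive $d^*$. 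Concretely, let $p$ be any ultrafilter containing $\{h^{-1}[B]:B\in r\}$; then $\widetilde h(p)=r$. If $C\in p$ then $h^{-1}[B]\cap C\neq\emptyset$ for all $B\in r$; I claim $d^*(C)>0$. Suppose not, so $d^*(C)=0$, i.e. $\lambda(\cchi_C)=0$ for all $\lambda\in LIM(S)$; then $d^*(S\setminus C)=1$, so $S\setminus C$ is thick by Theorem \ref{thick}(2). But $S\setminus C\notin p$, and the thick set $S\setminus C$ being missed by $p$ forces, via $\widetilde h(p)=r$, that $h[S\setminus C]\notin r$ — yet $d^*(h[S\setminus C])\geq d^*(S\setminus C)=1$ by Theorem \ref{thmdhA}, so $h[S\setminus C]$ is thick in $T$, hence $T\setminus h[S\setminus C]$ is not thick and has $d^*=0$, and since its complement is not in $r$... this is the crux. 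I would therefore expect the main obstacle to be precisely showing that \emph{some} ultrafilter in the fiber $\widetilde h^{-1}[\{r\}]$ belongs to $\Delta^*(S)$, and I would resolve it by the following compactness argument: for each $B\in r$, the set $\{A\subseteq h^{-1}[B]: d^*(A)>0\}$ is nonempty (it contains $h^{-1}[B]$ itself, which has $d^*\big(h^{-1}[B]\big)=d^*(B)>0$ by Theorem \ref{thmdB}), and one checks that $\{\cl_{\beta S}(h^{-1}[B]):B\in r\}\cup\{\cl_{\beta S}(A): d^*(A)=1\}$ is a family of closed sets with the finite intersection property — finite intersections of the $h^{-1}[B]$'s are again of that form (as $r$ is a filter and $h^{-1}$ preserves intersections), a finite intersection of density-one sets has density one hence is thick hence nonempty, and crossing the two types one uses that a density-one set meets every set of positive density; then any ultrafilter $p$ in the intersection of all these closed sets satisfies $\widetilde h(p)=r$ and contains every density-one set, so its complement in any member has density $<1$ and thus each member has density $>0$, i.e. $p\in\Delta^*(S)$. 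This gives $r\in\widetilde h[\Delta^*(S)]$ and completes the proof.

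\medskip
\noindent\emph{(In the writeup I would present the forward inclusion cleanly as a two-line application of Theorem \ref{thmdB}, and organize the reverse inclusion around the finite-intersection-property claim above, isolating as the key point that if $d^*(C)=0$ then $S\setminus C$ is thick, so it lies in every ultrafilter, forcing a contradiction with the construction of $p$.)*
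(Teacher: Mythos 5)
Your forward inclusion is exactly the paper's argument: $B\in\widetilde h(p)$ iff $h^{-1}[B]\in p$, plus Theorem \ref{thmdB}. The problem is the reverse inclusion, where your final ``compactness'' argument has a genuine gap. The family $\{A\subseteq S: d^*(A)=1\}$ does \emph{not} have the finite intersection property: by Theorem \ref{thick}(2), $d^*(A)=1$ exactly when $A$ is thick, and there exist disjoint thick sets (already in $(\ben,+)$, take $A=\bigcup_n[a_n,a_n+n]$ with rapidly growing $a_n$; both $A$ and its complement are thick, hence both have $d^*=1$). Consequently no ultrafilter can contain every density-one set, and your auxiliary claims --- ``a finite intersection of density-one sets has density one'' and ``a density-one set meets every set of positive density'' --- are both false (a thick set and its thick complement give counterexamples to each). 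So the closed family $\{\cl_{\beta S}(h^{-1}[B]):B\in r\}\cup\{\cl_{\beta S}(A):d^*(A)=1\}$ need not have the finite intersection property, and the construction of $p$ collapses; your own earlier observation that an \emph{arbitrary} $p\supseteq\{h^{-1}[B]:B\in r\}$ need not lie in $\Delta^*(S)$ was the right worry, but this does not repair it.

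The missing idea is partition regularity of positive density rather than an attempt to force all density-one sets into $p$. Since $r\in\Delta^*(T)$, Theorem \ref{thmdB} gives $d^*(h^{-1}[B])>0$ for every $B\in r$, and ${\mathcal A}=\{h^{-1}[B]:B\in r\}$ is closed under finite intersections (it is the $h$-preimage of the filter $r$). Because $d^*$ is subadditive, whenever a member of ${\mathcal A}$ is split into finitely many cells, some cell has positive $d^*$. This is precisely the hypothesis of \cite[Theorem 3.11]{HS} (equivalently, a Zorn's lemma argument on filters of positive-density sets containing ${\mathcal A}$), which yields an ultrafilter $p$ with ${\mathcal A}\subseteq p\subseteq\{A\subseteq S:d^*(A)>0\}$. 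Then $p\in\Delta^*(S)$ and $\widetilde h(p)=r$, completing the reverse inclusion; this is the route the paper takes.
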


\begin{proof} 
To see that $\widetilde h[\Delta^*(S)]\subseteq\Delta^*(T)$, let $q\in \widetilde h[\Delta^*(S)]$
and pick $p\in \Delta^*(S)$ such that $\widetilde h(p)=q$. Suppose $q\notin \Delta^*(T)$ and
pick $B\in q$ such that $d^*(B)=0$. Then $h^{-1}[B]\in p$, and by Theorem \ref{thmdB}, $d^*(h^{-1}[B])=0$, a contradiction.

To see that $\Delta^*(T)\subseteq \widetilde h[\Delta^*(S)]$, let $q\in \Delta^*(T)$.
Then, by Theorem \ref{thmdB}, for each $B\in q$, $d^*(h^{-1}[B])>0$. 
Let ${\mathcal A}=\{h^{-1}[B]:B\in q\}$ and let ${\mathcal R}=\{A\subseteq S:d^*(A)>0\}$.
Then by \cite[Theorem 3.11]{HS}, there exists $p\in \beta S$ such that
${\mathcal A}\subseteq p\subseteq{\mathcal R}$. So $p\in\Delta^*(S)$
and $\widetilde h(p)=q$.
\end{proof}

It is a consequence of Lemma \ref{lemlambdaright} that if $(S,\cdot)$ is a left amenable semigroup, then the intersection
of finitely many right ideals of $S$ is nonempty and, hence, is a right ideal.

\begin{lemma}\label{defR} Let $(S,\cdot)$ be a left amenable semigroup, let $n\in \ben$, 
and let\break $a_1,a_2,\ldots,
a_n$ and $b_1,b_2,\ldots,b_n$ be elements of $S$ 
with the property that for all $i\in\{1,2,\ldots,n\}$, there exists $x \in S$ such that $a_i x = b_i x$. 
There is a right ideal $R$ of $S$ such that $a_iu=b_iu$ for every
$u\in R$ and every $i\in \{1,2,\ldots,n\}$. \end{lemma}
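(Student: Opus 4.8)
The key observation is that for each fixed $i$, the set $R_i = \{u \in S : a_i u = b_i u\}$ is a right ideal of $S$: it is nonempty by hypothesis (it contains the element $x$ witnessing $a_i x = b_i x$, and indeed $a_i x y = b_i x y$ for every $y \in S$, so $xy \in R_i$), and if $u \in R_i$ then for any $s \in S$, $a_i (us) = (a_i u) s = (b_i u) s = b_i (us)$, so $us \in R_i$. Thus each $R_i$ is a right ideal, and $R = \bigcap_{i=1}^n R_i$ has exactly the property we want, namely $a_i u = b_i u$ for every $u \in R$ and every $i$. So the entire content of the lemma is that this intersection $R$ is nonempty (and being an intersection of finitely many right ideals, it is then automatically a right ideal).

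**The main point.** The nonemptiness of $\bigcap_{i=1}^n R_i$ is precisely the remark made in the excerpt immediately before the lemma: in a left amenable semigroup, the intersection of finitely many right ideals is nonempty. First I would make this explicit. By Lemma \ref{lemlambdaright}, if $\lambda$ is any left invariant mean on $S$ (which exists since $S$ is left amenable) and $T_1, \ldots, T_n$ are right ideals of $S$, then $\lambda(\cchi_{T_j}) = 1$ for each $j$, so $\lambda(\cchi_{T_1 \cap \cdots \cap T_n}) \geq 1 - \sum_{j=1}^n (1 - \lambda(\cchi_{T_j})) = 1 > 0$ by subadditivity of $\lambda$ on the complements; in particular $T_1 \cap \cdots \cap T_n \neq \emptyset$. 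Applying this with $T_j = R_j$ gives $R = \bigcap_{i=1}^n R_i \neq \emptyset$.

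**Finishing.** Having established that $R$ is nonempty, I note that an intersection of finitely many right ideals, when nonempty, is again a right ideal (if $u \in R$ and $s \in S$, then $us$ lies in each $R_i$ since each is a right ideal, so $us \in R$). Finally, by the definition of the $R_i$ and the fact that $R \subseteq R_i$ for each $i$, we have $a_i u = b_i u$ for every $u \in R$ and every $i \in \{1, 2, \ldots, n\}$, as required. I do not anticipate any genuine obstacle here — the only slightly non-trivial ingredient is the nonemptiness of a finite intersection of right ideals, and that is already an established consequence of Lemma \ref{lemlambdaright}; everything else is direct verification from the definitions.
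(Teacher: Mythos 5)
Your proof is correct and takes essentially the same route as the paper: define $R_i=\{u\in S:a_iu=b_iu\}$, note that each $R_i$ is a right ideal, and conclude via the fact (a consequence of Lemma \ref{lemlambdaright}) that in a left amenable semigroup a finite intersection of right ideals is nonempty and hence a right ideal. Your explicit subadditivity computation merely fills in the detail that the paper delegates to the remark immediately preceding the lemma.
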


\begin{proof} Let $R_i=\{u\in S:a_iu=b_iu\}$. Then each $R_i$ is a right ideal of $S$. Since $(S,\cdot)$ is left amenable, $R=\bigcap_{i=1}^n R_i$ is a right ideal. \end{proof}

\begin{lemma}\label{lemquotient} Let $(S,\cdot)$ be a semigroup satisfying SFC
and define a relation $\sim$ on $S$ by, for $a,b\in S$, $a\sim b$
if and only if there exists $x\in S$ such that $ax=bx$. Then $\sim$ 
is an equivalence relation on $S$ and the quotient $T=S/\hskip -2 pt \sim\hskip 2 pt$ is a
cancellative semigroup which satisfies SFC. \end{lemma}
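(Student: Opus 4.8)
The plan is to verify the three claims in order: $\sim$ is an equivalence relation, $T$ is cancellative, and $T$ satisfies SFC.

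\textbf{Step 1: $\sim$ is an equivalence relation.}
Reflexivity and symmetry are immediate from the definition. For transitivity, suppose $a\sim b$ and $b\sim c$, so there are $x,y\in S$ with $ax=bx$ and $by=cy$. The natural candidate witness is a point $u$ lying in both right ideals $R_1=\{u:au=bu\}$ and $R_2=\{u:bu=cu\}$; on such a $u$ we get $au=bu=cu$, so $a\sim c$. The existence of such a $u$ is exactly the content of Lemma \ref{defR} (with $n=2$), which uses left amenability of $S$ (a consequence of SFC) to guarantee that the intersection of the two right ideals $R_1,R_2$ is nonempty. So transitivity follows directly from Lemma \ref{defR}.

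\textbf{Step 2: $\sim$ is a congruence and $T$ is cancellative.}
First I would check that $\sim$ is a congruence, i.e.\ compatible with multiplication, so that the quotient $T=S/\sim$ is a well-defined semigroup. If $a\sim b$ via $ax=bx$, then for any $c\in S$ we have $(ca)x=(cb)x$, so $ca\sim cb$; and $(ac)(x)$ — here I want a witness for $ac\sim bc$: since $ax=bx$, right-multiplying by any appropriate element, note $acu = bcu$ will hold once $cu$ lies in $\{v:av=bv\}$, and $cx$ works if $acx=bcx$, which follows from $ax=bx$ by left-multiplying — more cleanly, $(ac)x' = (bc)x'$ where we may take $x'$ with $cx' $ making things equal; the direct computation is that $ax=bx \Rightarrow a(cx)$ need not equal $b(cx)$, so instead observe $acx \cdot$: actually the clean statement is $ac \sim bc$ because $(ac)x = a(cx)$ and we'd want $c x$ in the right ideal — the safe route is: $ax = bx$ implies $axy = bxy$ for all $y$, and right-multiplying the equation $ax=bx$ on the right by any $z$ gives $axz=bxz$; then $ac \sim bc$ is witnessed once we find $w$ with $(ac)w=(bc)w$, and $w$ can be taken so that $cw \in \{v : av = bv\}$ — since that set is a right ideal containing $x$, it contains $xz$ for all $z$, so we need $cw$ of that form, which we get from $w=$ anything in $S$ with... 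This is routine once set up correctly; I would phrase it using that $\{v:av=bv\}$ is a right ideal. Then cancellativity of $T$: if $[a][c]=[b][c]$ in $T$, i.e.\ $ac\sim bc$, there is $x$ with $acx=bcx$, so $a(cx)=b(cx)$, hence $a\sim b$, i.e.\ $[a]=[b]$; this gives right cancellativity. For left cancellativity, if $[c][a]=[c][b]$, i.e.\ $ca\sim cb$, there is $x$ with $cax=cbx$; I need $a\sim b$. Here the witness is again furnished via a right-ideal argument: the set $\{u:cau=cbu\}$ is a right ideal containing $x$, but to conclude $a\sim b$ I want some $u$ with $au=bu$, which does not obviously follow — so for left cancellativity one instead argues directly: the relation $\sim$ is symmetric and one shows $\{a : a\sim b\}$ is saturated, or uses that $S$ is left amenable to find a common right ideal on which all the relevant equalities hold. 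I would spell this out carefully as it is the subtle point.

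\textbf{Step 3: $T$ satisfies SFC.}
Let $q:S\to T$ be the quotient homomorphism. Given $H\in\pf(T)$ and $\epsilon>0$, lift each element of $H$ to a preimage in $S$, obtaining $H'\in\pf(S)$ with $q[H']=H$. Apply SFC for $S$ to get $K'\in\pf(S)$ with $|K'\setminus sK'|<\epsilon\cdot|K'|$ for all $s\in H'$. The natural candidate is $K=q[K']$, but $|K|$ may be smaller than $|K'|$ if $q$ is not injective on $K'$. To fix this, I would instead choose $K'$ so that $q$ is injective on it: replace $K'$ by a suitable subset, or — better — observe that if $K'$ is taken to be a union of $\sim$-classes intersected appropriately one can arrange a transversal. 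Cleaner: since each fiber of $q$ is infinite or the argument needs care, I would instead use the mean-theoretic route, invoking Lemma \ref{lemnusubh}: a left invariant mean on $S$ pushes to a left invariant mean on $T$, so $T$ is left amenable; then since $T$ is cancellative (Step 2) and left amenable, the cited result (the left-right switch of \cite[Corollary 3.6]{N}) gives that $T$ satisfies SFC. This avoids the combinatorial difficulty entirely.

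\textbf{Main obstacle.} The genuinely delicate point is establishing that $\sim$ is a congruence and that $T$ is \emph{two-sided} cancellative, particularly left cancellativity, where the obvious witness-chasing does not close and one must use left amenability of $S$ (via Lemma \ref{defR}) to produce a common right ideal on which the required identities simultaneously hold. Once cancellativity and left amenability of $T$ are in hand, that $T$ satisfies SFC is immediate from the cited results.
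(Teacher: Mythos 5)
The paper gives no self-contained proof of this lemma --- it simply cites \cite[Lemma 3.2]{HSc}, whose argument derives from \cite{G} and \cite{K} --- so your outline must stand on its own, and it has a genuine gap at exactly the point you flag as subtle. Step 1 is correct, and the unfinished congruence argument in Step 2 is easily repaired: $\{v\in S:av=bv\}$ and $cS$ are both right ideals of the left amenable semigroup $S$, hence intersect, and any $w$ with $cw$ in the first set witnesses $ac\sim bc$; right cancellativity of $T$ is correct as you state it. What is never proved is left cancellativity of $T$. Your proposed fix --- ``use that $S$ is left amenable to find a common right ideal on which all the relevant equalities hold'' --- cannot work: from $cax=cbx$ the only right ideal available is $\{u\in S:cau=cbu\}$, and Lemma \ref{defR} is inapplicable to the pair $a,b$ because its hypothesis (the existence of some $x$ with $ax=bx$) is exactly the conclusion sought. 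In fact left amenability of $S$ alone is provably insufficient: it is known that left amenability does not imply SFC (see \cite{K} and the discussion in \cite[Section 4.22]{P}), whereas if $T=S/\sim$ were left cancellative for every left amenable $S$, then $T$ would be left amenable (Lemma \ref{lemnusubh}) and left cancellative, hence satisfy SFC by the left-right switch of \cite[Corollary 3.6]{N}, and SFC would then transfer back to $S$ (this is essentially \cite[Theorem 2.2]{K}; the mechanism is the transversal-plus-right-ideal transfer appearing in the proof of Theorem \ref{d=dstar}). So SFC of $S$ must enter the proof of left cancellativity in an essential way, and your outline never uses it for that purpose.

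This gap then makes Step 3 circular. Your mean-theoretic route to SFC of $T$ is: $T$ is left amenable and cancellative, hence satisfies SFC by \cite[Corollary 3.6]{N}; that requires the unproved left cancellativity. Conversely, the only available route from right cancellativity to left cancellativity, Klawe's \cite[Corollary 2.3]{K} (quoted in Section 4 of this paper), requires that $T$ already satisfy SFC --- and your direct combinatorial attempt to push a F\o lner set of $S$ down to $T$ was abandoned precisely because the projection need not be injective on it. Thus neither left cancellativity of $T$ nor SFC of $T$ is ever actually established; each is deferred to the other. A correct proof must obtain one of the two directly from SFC of $S$, which is exactly the content supplied by the cited \cite[Lemma 3.2]{HSc}, via \cite[Theorem 2.2]{K} and \cite[Lemma 2 and Remark 3]{G}.
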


\begin{proof} This is \cite[Lemma 3.2]{HSc}. Its proof was  based on the proofs of
\cite[Lemma 2 and Remark 3]{G} and \cite[Theorem 2.2]{K}. \end{proof}

Throughout the rest of this section we will assume that
$(S,\cdot)$ is a semigroup satisfying SFC, $\sim$ is the equivalence relation of Lemma \ref{lemquotient}, 
$(T,\cdot)$ is the cancellative quotient of $S$,
$h:S\to T$ is the projection map, and $\widetilde h:\beta S\to \beta T$ is the continuous 
extension of $h$.

\begin{lemma}\label{lemxpyp} Let $p\in\Delta(S)$ and let $x,y\in\beta S$. 
If $\widetilde h(x)=\widetilde h(y)$, then $xp=yp$.
\end{lemma}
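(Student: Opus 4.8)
The plan is to unwind everything to the level of subsets of $S$ and exploit the characterization of $pq$ via the sets $Aq^{-1}$ together with the fact (Lemma \ref{lemKinDelta}, or more precisely Lemma \ref{lemma_ultrafilter_mean_shift} and the density estimates available for $p\in\Delta(S)$) that membership in $\Delta(S)$ is controlled by positivity of F\o lner density, which we may freely trade for positivity of Banach density since $d(A)\le d^*(A)\le d_t(A)$ and, by Lemma \ref{lemRnotemp}/Corollary \ref{dtgeqdstar}, positivity of any of the three is equivalent. So the first step is to reduce the claim $xp=yp$ to showing that $A\in xp \iff A\in yp$ for every $A\subseteq S$, which by Lemma \ref{lemma_algebraic_properties_of_ultrafilter_shift}(1) is the same as $Ap^{-1}\in x \iff Ap^{-1}\in y$.

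Next I would use the hypothesis $\widetilde h(x)=\widetilde h(y)$. By definition of $\widetilde h$ and the basic description of the continuous extension, $\widetilde h(x)=\widetilde h(y)$ means precisely that for every $C\subseteq T$, $h^{-1}[C]\in x \iff h^{-1}[C]\in y$; equivalently, $x$ and $y$ contain the same $h$-saturated sets. Therefore it suffices to show that, for each $A\subseteq S$, the set $Ap^{-1}$ agrees, modulo a set of density zero that cannot belong to any ultrafilter in $\Delta(S)$... wait, that is not quite the mechanism. The cleaner route: I would show that $Ap^{-1}$ is equal, up to $\sim$-saturation, to an $h$-saturated set, in the sense that $Ap^{-1}$ and $h^{-1}\big[h[Ap^{-1}]\big]$ differ by a set that is disjoint from a right ideal $R$ of $S$ on which the relevant equalities hold. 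Concretely, the key observation is that for $a\sim b$ in $S$, i.e. $ax=bx$ for some $x$, Lemma \ref{defR} gives a right ideal $R$ on which $a$ and $b$ act identically, and hence $a^{-1}A$ and $b^{-1}A$ agree after intersecting with $R$: if $u\in R$ then $au=bu$, so $au\in A \iff bu\in A$. Consequently $s\in a^{-1}A \cap R \iff s\in b^{-1}A\cap R$. Pushing this through the definition $Ap^{-1}=\{s:s^{-1}A\in p\}$ and using that $p\in\Delta(S)$ forces every member of $p$ to have positive density — in particular $p$ must contain every right ideal (right ideals are thick, hence have density $1$ by Theorem \ref{thick}, hence lie in $p$) — we get that $s^{-1}A\in p \iff (s^{-1}A\cap R)\in p$, and the latter is insensitive to replacing $s$ by a $\sim$-equivalent element. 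Therefore $Ap^{-1}$ is $\sim$-saturated, i.e. $Ap^{-1}=h^{-1}\big[h[Ap^{-1}]\big]$.

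Once $Ap^{-1}$ is known to be $h$-saturated, we are done: writing $C=h[Ap^{-1}]\subseteq T$, we have $Ap^{-1}=h^{-1}[C]$, so $Ap^{-1}\in x \iff h^{-1}[C]\in x \iff h^{-1}[C]\in y \iff Ap^{-1}\in y$, the middle equivalence being exactly $\widetilde h(x)=\widetilde h(y)$. Since $A$ was arbitrary, $xp=yp$. The main obstacle — the step I expect to require the most care — is the claim that $Ap^{-1}$ is $\sim$-saturated. This needs (i) that for each $s$, there is a single right ideal $R_s$ (or one common $R$ works for a given pair $s\sim s'$) on which the two translations agree, which is Lemma \ref{defR}; (ii) that $p$ contains all right ideals, which follows because a right ideal $R$ is thick (given $F\in\pf(S)$, pick $a\in R$; then $Fa\subseteq R$), so $d(R)=1$ by Theorem \ref{thick}(3) and hence $R\in p$ since the complement of $R$ has density $0$ and cannot lie in a $p\in\Delta(S)$; and (iii) combining these, $s^{-1}A\in p \iff s^{-1}A\cap R\in p$ and $s^{-1}A\cap R = s'^{-1}A\cap R$ when $s\sim s'$ (via the common right ideal for that pair, intersected with anything else needed). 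One should be slightly careful that the equivalence $s\sim s'$ only supplies one witnessing right ideal, but that is all that is used, since the argument is run one pair at a time. Assembling these pieces gives $Ap^{-1}\in p \iff$ depends only on $h(s)$, i.e. $Ap^{-1}$ is saturated, completing the proof.
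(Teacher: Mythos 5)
Your route is genuinely different from the paper's, and the second half of it is an attractive alternative. The paper argues by contradiction: choosing $A\in xp\setminus yp$, it uses continuity of $\rho_p$ to find $X\in x$ and $Y\in y$ with $\rho_p[\,\overline X\,]\subseteq\overline A$ and $\rho_p[\,\overline Y\,]\subseteq\overline{S\setminus A}$, notes that $\widetilde h(x)=\widetilde h(y)$ lies in $\cl h[X]\cap \cl h[Y]$, hence $h[X]\cap h[Y]\neq\emp$, and then applies Lemma \ref{lemapbp} (if $a\sim b$ then $ap=bp$ for all $p\in\Delta(S)$) to get a contradiction. You instead show that $Ap^{-1}$ is $\sim$-saturated and then transport membership through $\widetilde h(x)=\widetilde h(y)$. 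Your saturation claim is exactly the ultrafilter-membership form of Lemma \ref{lemapbp} (with essentially the paper's underlying mechanism: Lemma \ref{defR} plus the fact that $p$ contains every right ideal), and your derivation of $xp=yp$ from it is purely combinatorial, avoiding the topological argument with disjoint closures; that is a clean and correct replacement.

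There is, however, one step whose justification as written is wrong. To get that every right ideal $R$ lies in $p$, you argue that $R$ is thick, so $d(R)=1$, ``hence the complement of $R$ has density $0$.'' F\o lner density is an upper density: $d(R)=1$ does not imply $d(S\setminus R)=0$ (a set and its complement can both have density $1$, e.g.\ a thick set with thick complement), so thickness alone does not give what you need. For a right ideal the conclusion is true, but for a different reason, available at this point in the paper: by Lemma \ref{lemlambdaright} every left invariant mean gives $R$ measure $1$, so $d^*(S\setminus R)=0$, and by Theorem \ref{thmneteq} $d(S\setminus R)\leq d^*(S\setminus R)=0$; hence $S\setminus R\notin p$, i.e.\ $R\in p$, for every $p\in\Delta(S)$. (This is in effect how the paper itself proves Lemma \ref{lemapbp}, via Lemmas \ref{lemDeltasubAbar} and \ref{lemcSsubC}.) A minor aside: your opening remark that positivity of $d$, $d^*$, and $d_t$ are equivalent is not yet available here --- only $d\leq d^*\leq d_t$ is --- but you never use it. With the right-ideal step repaired as above, your proof is correct.
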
 

\begin{proof} Assume that $\widetilde h(x)=\widetilde h(y)=q$ and
suppose that $xp\neq yp$. Pick $A\in xp\setminus yp$. Since $\rho_p$
is continuous, pick $X\in x$ and $Y\in y$ such that 
$\rho_p[\,\overline X\,]\subseteq \overline A$ and
$\rho_p[\,\overline Y\,]\subseteq \overline{S\setminus A}$.
Then $q\in\widetilde h[\,\overline X\,]=\cl h[X]$ and
$q\in\widetilde h[\,\overline Y\,]=\cl h[Y]$. Since
$\beta S$ is extremally disconnected, $h[X]\cap h[Y]\neq\emp$,
so pick $a\in X$ and $b\in Y$ such that $h(a)=h(b)$. By Lemma \ref{lemapbp},
$ap=bp$. This is a contradiction since $ap\in\overline A$ and
$bp\in\overline{S\setminus A}$. \end{proof} 

\begin{theorem}\label{thmhiso} If there is an element of
$\Delta(S)$ which is right cancelable in $\beta S$,
then $\widetilde h:\beta S\to\beta T$ is an isomorphism.
\end{theorem}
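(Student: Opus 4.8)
The statement asserts that if some $p \in \Delta(S)$ is right cancelable in $\beta S$, then $\widetilde h : \beta S \to \beta T$ is an isomorphism. Since $\widetilde h$ is already a continuous surjective homomorphism of compact right topological semigroups (by \cite[Corollary 4.22]{HS}), the only thing that can fail is injectivity, and in fact a continuous bijection between compact Hausdorff spaces is automatically a homeomorphism; so it suffices to prove $\widetilde h$ is injective. The plan is to derive injectivity from Lemma \ref{lemxpyp} together with the right cancelability of $p$.

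First I would fix $p \in \Delta(S)$ right cancelable in $\beta S$, and let $x, y \in \beta S$ with $\widetilde h(x) = \widetilde h(y)$; the goal is $x = y$. By Lemma \ref{lemxpyp}, $\widetilde h(x) = \widetilde h(y)$ gives $xp = yp$. Now right cancelability of $p$ means exactly that $\rho_p$ is injective on $\beta S$, i.e. $xp = yp$ implies $x = y$. Hence $x = y$, so $\widetilde h$ is injective, and being a continuous bijection of compact Hausdorff spaces it is a homeomorphism; since it is also a homomorphism, it is an isomorphism of semitopological (indeed right topological) semigroups.

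The only real content to check is that Lemma \ref{lemxpyp} is applicable, i.e.\ that its hypothesis "$p \in \Delta(S)$" is met — which is exactly our standing assumption — and that right cancelability in $\beta S$ is the correct notion to invoke. One subtlety worth a sentence in the writeup: the excerpt references a ``Lemma \ref{lemapbp}'' inside the proof of Lemma \ref{lemxpyp}, which presumably establishes that for $p \in \Delta(S)$ and $a,b \in S$ with $h(a) = h(b)$ one has $ap = bp$ in $\beta S$; this is the mechanism by which membership of $p$ in $\Delta(S)$ is used, and it is what makes the density hypothesis indispensable. I would simply cite Lemma \ref{lemxpyp} as a black box and not re-prove it.

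The main (and essentially only) obstacle is conceptual rather than technical: recognizing that the whole theorem reduces to "$\widetilde h$ injective", that injectivity of $\widetilde h$ on points of $S$ is automatic (distinct principal ultrafilters map to distinct principal ultrafilters because $h$ restricted to $S$ is... wait — not necessarily injective on $S$!). So one must be careful: $h : S \to T$ is generally \emph{not} injective (it collapses $\sim$-equivalent elements), yet the claim is that $\widetilde h$ on all of $\beta S$ \emph{is} injective. This is precisely the surprising part, and it is resolved entirely by the right-cancelability hypothesis feeding through Lemma \ref{lemxpyp}: if $a \sim b$ in $S$ with $a \neq b$, then $\widetilde h(a) = \widetilde h(b)$, forcing $ap = bp$; but then $a = b$ by right cancelability, a contradiction — so in fact the hypothesis secretly forces $\sim$ to be trivial and $h$ (hence $\widetilde h$) to be injective. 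I would flag this point but the formal argument is just the two-line chain above.
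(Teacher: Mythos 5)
Your argument is correct and is exactly the paper's proof: fix a right cancelable $p\in\Delta(S)$, use Lemma \ref{lemxpyp} to get $xp=yp$ whenever $\widetilde h(x)=\widetilde h(y)$, and cancel $p$ to conclude injectivity of the surjective homomorphism $\widetilde h$. Your side remarks (continuous bijection of compact Hausdorff spaces is a homeomorphism; the hypothesis forces $\sim$ to be trivial) are accurate but not needed beyond what the paper states.
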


\begin{proof} Pick $p\in\Delta(S)$ such that $p$
is right cancelable in $\beta S$. Since $\widetilde h$
is a surjective homomorphism,  it sufices to show that $\widetilde h$ is injective. Let $x,y\in\beta S$
and assume that $\widetilde h(x)=\widetilde h(y)$. Then by Lemma \ref{lemxpyp},
$xp=yp$ so $x=y$.\end{proof}

\begin{corollary}\label{corhiso} If there is an element of
$\cl K(\beta S)$ which is right cancelable in $\beta S$,
then $\widetilde h:\beta S\to\beta T$ is an isomorphism.
\end{corollary}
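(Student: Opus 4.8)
The plan is to deduce this directly from Theorem \ref{thmhiso} together with Lemma \ref{lemKinDelta}. Recall that the standing hypothesis of this section guarantees that $(S,\cdot)$ satisfies SFC, so that $\Delta(S)$ is defined and Lemma \ref{lemKinDelta} applies, giving $\cl K(\beta S)\subseteq\Delta(S)$.

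Concretely, the argument would run as follows. Let $p\in\cl K(\beta S)$ be an element that is right cancelable in $\beta S$, as provided by the hypothesis. By Lemma \ref{lemKinDelta}, $p\in\Delta(S)$. Hence $\Delta(S)$ contains an element which is right cancelable in $\beta S$, and Theorem \ref{thmhiso} immediately yields that $\widetilde h\colon\beta S\to\beta T$ is an isomorphism.

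There is no real obstacle here: the corollary is simply the specialization of Theorem \ref{thmhiso} to the case in which the right cancelable element is drawn from $\cl K(\beta S)$, and the only input beyond Theorem \ref{thmhiso} is the already-established inclusion $\cl K(\beta S)\subseteq\Delta(S)$. The one point worth a moment's care is that the hypothesis places $p$ in the \emph{closure} of $K(\beta S)$ rather than in $K(\beta S)$ itself; but Lemma \ref{lemKinDelta} is stated precisely for $\cl K(\beta S)$, so this causes no difficulty, and no further structural analysis of $K(\beta S)$ is needed.
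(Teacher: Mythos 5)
Your argument is correct and matches the paper's proof exactly: both simply invoke Lemma \ref{lemKinDelta} to place the right cancelable element of $\cl K(\beta S)$ inside $\Delta(S)$ and then apply Theorem \ref{thmhiso}.
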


\begin{proof} By Lemma \ref{lemKinDelta}, $\cl K(\beta S)\subseteq \Delta(S)$.
\end{proof}

\begin{corollary}\label{corScanc} If there is an element of
$\Delta(S)$ which is right cancelable in $\beta S$, then
$S$ is cancellative.\end{corollary}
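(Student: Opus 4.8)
The plan is to combine Theorem \ref{thmhiso} with Lemma \ref{lemquotient} and a basic fact about the embedding of $S$ in $\beta S$. Assume $p \in \Delta(S)$ is right cancelable in $\beta S$. By Theorem \ref{thmhiso}, the continuous extension $\widetilde h : \beta S \to \beta T$ of the projection $h : S \to T$ onto the cancellative quotient is an isomorphism; in particular $\widetilde h$ is injective. I claim this forces $h$ itself to be injective on $S$. Indeed, since $h$ is the projection onto $\beta S$'s restriction, $\widetilde h$ restricted to $S$ agrees with $h$ (identifying points of $S$ and $T$ with their principal ultrafilters), and the restriction of an injective map is injective; so $h \restriction S$ is injective.

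Next I would unwind what injectivity of $h$ says about $\sim$. By definition of the equivalence relation $\sim$ in Lemma \ref{lemquotient}, $h(a) = h(b)$ if and only if $a \sim b$, i.e.\ if and only if there exists $x \in S$ with $ax = bx$. So injectivity of $h$ on $S$ says: whenever $a, b \in S$ and $ax = bx$ for some $x \in S$, we have $a = b$. That is precisely the statement that $S$ is right cancellative.

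It remains to get left cancellativity. Here I would invoke Corollary \ref{dtgeqdstar} together with the structure already in place, or more directly: a semigroup satisfying SFC whose cancellative quotient $T$ equals $S$ is cancellative, and by Lemma \ref{lemquotient} the quotient $T$ is a \emph{cancellative} semigroup, so once $h$ is a bijection (hence an isomorphism of $S$ onto $T$), $S$ inherits full cancellativity — both left and right — from $T$. Concretely: $\widetilde h$ an isomorphism of $\beta S$ onto $\beta T$ restricts to an isomorphism of $S$ onto $T$, and $T$ is cancellative, so $S$ is cancellative.

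The only subtle point — and the step I'd expect to need the most care — is justifying that $\widetilde h \restriction S = h$ and that an isomorphism of $\beta S$ onto $\beta T$ restricts to an isomorphism of the dense subsemigroups $S$ and $T$; this uses that $S$ sits in the topological center of $\beta S$ and that $\widetilde h$, being the Stone–\v Cech extension, sends $S$ onto $T$ (surjectivity of $h$) while injectivity of $\widetilde h$ on all of $\beta S$ a fortiori gives injectivity on $S$. Everything else is a direct unwinding of definitions, so the corollary follows immediately from Theorem \ref{thmhiso}.
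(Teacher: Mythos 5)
Your proposal is correct and follows essentially the same route as the paper: Theorem \ref{thmhiso} gives that $\widetilde h$ is an isomorphism, its restriction to $S$ is $h$, so $S$ is isomorphic to the quotient $T$, which is cancellative by Lemma \ref{lemquotient}. The detour through right cancellativity and the passing mention of Corollary \ref{dtgeqdstar} are unnecessary (the isomorphism with $T$ already yields full cancellativity), but they do no harm.
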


\begin{proof} By Theorem \ref{thmhiso}, $S$ and $T$ are
isomorphic, and by Lemma \ref{lemquotient}, $T$ is cancellative.\end{proof}

If $S$ is cancellative and countable, $\cl K(\beta S)$ does contain right cancelable elements (\cite[Corollary 8.26]{HS}). It follows
from Corollary \ref{corScanc} that, for a countable semigroup $S$ which satisfies SFC, the existence of right cancelable elements of $\beta S$ in $\cl K(\beta S)$ is equivalent to $S$ being cancellative.

\begin{theorem}\label{thmisoleft} The minimal left ideals of
$\beta S$ and $\beta T$ are isomorphic.\end{theorem}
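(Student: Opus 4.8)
The plan is to exploit the surjective continuous homomorphism $\widetilde h\colon\beta S\to\beta T$ and to show that it carries a suitable minimal left ideal of $\beta S$ bijectively, hence isomorphically, onto a minimal left ideal of $\beta T$. Since, as recalled above, any two minimal left ideals of $\beta S$ are isomorphic to one another and likewise for $\beta T$, exhibiting one such isomorphism between a minimal left ideal of $\beta S$ and one of $\beta T$ proves the theorem.

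First I would fix a minimal left ideal $L$ of $\beta S$ and an idempotent $e\in L$ (such an idempotent exists because the intersection of $L$ with any minimal right ideal of $\beta S$ is a group, whose identity lies in $L$). Since $Le$ is a left ideal of $\beta S$ contained in $L$, minimality of $L$ forces $Le=L$, and consequently $xe=x$ for every $x\in L$.

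Next I would check that $\widetilde h[L]$ is a minimal left ideal of $\beta T$. It is a left ideal because $\widetilde h$ is a surjective homomorphism: $\beta T\cdot\widetilde h[L]=\widetilde h[\beta S]\cdot\widetilde h[L]=\widetilde h\big[\beta S\cdot L\big]\subseteq\widetilde h[L]$. For minimality, choose a minimal left ideal $L'$ of $\beta T$ with $L'\subseteq\widetilde h[L]$, pick $q\in L'$, and pick $x\in L$ with $\widetilde h(x)=q$. Since $L$ is minimal, $\beta S\,x=L$, so $\widetilde h[L]=\widetilde h[\beta S]\,\widetilde h(x)=\beta T\,q\subseteq L'$, whence $\widetilde h[L]=L'$ is minimal.

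The heart of the argument is injectivity of $\widetilde h$ on $L$. Suppose $x,y\in L$ with $\widetilde h(x)=\widetilde h(y)$. By Lemma~\ref{lemKinDelta}, $e\in K(\beta S)\subseteq\cl K(\beta S)\subseteq\Delta(S)$, so Lemma~\ref{lemxpyp} applies with $p=e$ and yields $xe=ye$; since $xe=x$ and $ye=y$, we conclude $x=y$. Therefore $\widetilde h|_L\colon L\to\widetilde h[L]$ is a continuous bijective homomorphism from a compact space onto a Hausdorff space, hence a homeomorphism and thus an isomorphism of (topological) semigroups. I expect this injectivity step to be the only nontrivial point, and it is exactly where the hypothesis that $S$ satisfies SFC is used, by way of Lemmas~\ref{lemKinDelta} and~\ref{lemxpyp}.
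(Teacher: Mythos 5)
Your proof is correct and follows essentially the same route as the paper: show $\widetilde h[L]$ is a minimal left ideal, and get injectivity on $L$ by applying Lemma \ref{lemxpyp} at an idempotent $e\in L\subseteq K(\beta S)\subseteq\Delta(S)$ (via Lemma \ref{lemKinDelta}), using that $e$ is a right identity for $L$. The only difference is that you prove directly the two facts the paper cites from \cite{HS} (that the homomorphic image of a minimal left ideal is minimal, and that an idempotent in $L$ is a right identity for $L$), which is fine.
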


\begin{proof} Let $L$ be a minimal left ideal of $\beta S$. By \cite[Exercise 1.7.3]{HS},
$\widetilde h[L]$ is a minimal left ideal of $\beta T$. We will show that the
restriction of $\widetilde h$ to $L$ is an isomorphism. So let
$a,b\in L$, and assume that $\widetilde h(a)=\widetilde h(b)$.
Pick an idempotent $p$ in $L$. By Lemma \ref{lemxpyp}, $ap=bp$ so, 
since $p$ is a right identity for $L$ by \cite[Lemma 1.30]{HS},
$a=b$. \end{proof}

\begin{theorem}\label{Tfinite} Assume that $T$ is finite. Then:

\begin{itemize}
\item[(1)] $T$ is a finite group.
\item[(2)] $T$ is the unique minimal left ideal of $\beta T$.
\item[(3)] The minimal left ideals of $\beta S$ are isomorphic to $T$.
\item[(4)] $\beta S$ has a unique minimal right ideal.
\item[(5)] $T=K(\beta T)=\Delta(T)$.
\end{itemize}
\end{theorem}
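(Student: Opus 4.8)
The plan is to prove the five parts essentially in order, using that a finite discrete semigroup coincides with its Stone-\v Cech compactification, together with Theorem~\ref{thmisoleft} and the structure theory of $K(\beta S)$. For (1), recall from Lemma~\ref{lemquotient} that $T$ is cancellative, and invoke the classical fact that a finite cancellative semigroup is a group: a power of any element of $T$ is an idempotent $e$, every idempotent in a cancellative semigroup is a two-sided identity, and then for each $a\in T$ the maps $\lambda_a$ and $\rho_a$ are injective, hence surjective by finiteness, producing a left inverse and a right inverse of $a$, which must agree. So $T$ is a finite group.

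Since $T$ is finite, every ultrafilter on $T$ is principal, so $\beta T=T$ as right topological semigroups. In a group the only left ideal is the whole group (if $L$ is a left ideal and $a\in L$, then $T=Ta\subseteq L$), and likewise the only right ideal is the whole group; this gives (2), shows that $T$ is the unique minimal left ideal of $\beta T$, and shows that $K(\beta T)=T$. Statement (3) then follows at once from Theorem~\ref{thmisoleft}, since that theorem says the minimal left ideals of $\beta S$ are isomorphic to a minimal left ideal of $\beta T$, which by (2) is $T$.

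For (4), the point is that by (3) every minimal left ideal $L$ of $\beta S$ is isomorphic to the group $T$, hence is itself a group, and so has a unique idempotent $e$. Fix such an $L$, and let $R$ be an arbitrary minimal right ideal of $\beta S$. As recorded in the excerpt, $L\cap R$ is a group, so it contains an idempotent, which lies in $L$ and therefore equals $e$; thus $e\in R$. Since distinct minimal right ideals are disjoint, $\beta S$ has exactly one minimal right ideal. Finally, for (5): by (2), $K(\beta T)$, being the union of the minimal left ideals of $\beta T$, equals $T$, and since $\beta T=T$ is discrete we also have $\cl K(\beta T)=T$; applying Lemma~\ref{lemKinDelta} to $T$ (which satisfies SFC by Lemma~\ref{lemquotient}) gives $T=\cl K(\beta T)\subseteq\Delta(T)$, and since $\Delta(T)\subseteq\beta T=T$ we conclude $T=K(\beta T)=\Delta(T)$.

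I expect (4) to be the main obstacle: it is the only step that uses the internal structure of the smallest ideal (that a minimal left ideal meets every minimal right ideal in a group, that a group has exactly one idempotent, and that distinct minimal right ideals are disjoint; see \cite{HS}), while the other four parts are short once (1) and (2) are in hand.
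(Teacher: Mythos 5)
Your proposal is correct and follows essentially the same route as the paper: cancellativity of $T$ from Lemma \ref{lemquotient} plus finiteness gives a group, $\beta T=T$ is its own unique minimal left ideal, (3) comes from Theorem \ref{thmisoleft}, (4) from the unique idempotent of a minimal left ideal meeting every minimal right ideal in a group, and (5) from Lemma \ref{lemKinDelta} squeezed between $K(\beta T)$ and $\beta T=T$. The only differences are cosmetic (you spell out why a finite cancellative semigroup is a group and phrase (4) via ``every minimal right ideal contains $e$'' rather than the paper's two-right-ideals contradiction).
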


\begin{proof}
(1) By Lemma \ref{lemquotient}, $T$ is cancellative. As a finite
cancellative semigroup, $T$ is a group.

(2) Since $T$ has a unique idempotent, it has only one minimal left
ideal. If $e$ is the identity of $T$, then $T=Te$.

(3) This follows from (2) and Theorem \ref{thmisoleft}.

(4) Let $L$ be a minimal left ideal of $\beta S$. By (3), $L$ is
isomorphic to $T$, so it has a unique idempotent. If $R_1$ and $R_2$
were distinct (hence disjoint) minimal right ideals of $\beta S$, 
both $R_1\cap L$ and $R_2\cap L$, being groups,  would have idempotents.

(5) By (2), Lemma \ref{lemKinDelta}, and the fact that $K(\beta T)$ is the 
union of the minimal left ideals of $\beta T$, 
$T=K(\beta T)\subseteq \Delta(T)\subseteq \beta T=T$.
\end{proof}

In this part of this section, we are assuming that $S$ satisfies SFC. 
Since the next result is probably the most important result of the paper,
we recall in its statement that SFC is the only assumption needed.

\begin{theorem}\label{d=dstar} Let $(S,\cdot)$ be a semigroup which satisfies SFC. 
Then, for every subset $A$ of $S$, $d(A)=d^*(A)=d_t(A)$.
\end{theorem}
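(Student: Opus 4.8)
The plan is to upgrade the known inequality $d(A) \le d^*(A) \le d_t(A)$ (established above by combining Theorem \ref{thmneteq} and Corollary \ref{dtgeqdstar}) to equality by passing to the cancellative quotient $T = S/\!\sim$ and transporting densities back and forth along the projection $h : S \to T$. Since $T$ is cancellative and satisfies SFC (Lemma \ref{lemquotient}), the result of Bergelson and Glasscock \cite[Theorem 3.5]{BG} gives $d(B) = d^*(B) = d_t(B)$ for every $B \subseteq T$; the strategy is to reduce the statement for $S$ to the statement for $T$. The key identity I would aim to prove is that for every $A \subseteq S$, all three densities of $A$ in $S$ agree with the corresponding densities in $T$ of a suitable subset of $T$ associated to $A$ — the natural candidate being the \emph{down-closure} or, more precisely, the image $h[A]$ together with a saturation argument, since in general $A$ is not a union of $\sim$-classes.

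First I would nail down the behavior of $d^*$ under $h$: Theorem \ref{thmdB} already gives $d^*(h^{-1}[B]) = d^*(B)$ for $B \subseteq T$, and Theorem \ref{thmdhA} gives $d^*(h[A]) \ge d^*(A)$. The crux is to get a reverse inequality. I would try to show that for arbitrary $A \subseteq S$ one has $d^*(A) = d^*(\widehat A)$ where $\widehat A = \{s \in S : \{t : t \sim s\} \cap A \text{ is "large"}\}$ — but a cleaner route is to exploit that a left invariant mean $\mu$ on $S$ is insensitive to the difference between $A$ and any set differing from it within individual finite $\sim$-classes, because of Lemma \ref{lemlambdaright}: each $\sim$-class generates (via the right ideals $R_i = \{u : a_i u = b_i u\}$ of Lemma \ref{defR}) sets on which left translates collapse, and $\mu$ assigns measure $1$ to every right ideal. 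Using this, I expect to show $\mu(\cchi_A) = \mu(\cchi_{h^{-1}[h[A]]} \cdot w)$ for an appropriate weight, ultimately yielding $d^*(A) = d^*_T(h[A])$, and symmetrically for $d$ and $d_t$ using the explicit Følner/translate formulations in Definitions \ref{defdensity} and \ref{defdt} — note $h$ maps Følner sets in $S$ to Følner sets in $T$ and, being a surjection, pulls back witnesses for $d_t$ in $T$ to witnesses in $S$.

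With the transfer identities $d(A) = d_T(h[A])$, $d^*(A) = d^*_T(h[A])$, $d_t(A) = d_{t,T}(h[A])$ in hand, the theorem is immediate: apply \cite[Theorem 3.5]{BG} to the cancellative semigroup $T$ and the set $h[A] \subseteq T$ to get $d_T(h[A]) = d^*_T(h[A]) = d_{t,T}(h[A])$, and conclude. The main obstacle will be the transfer identity for $d_t$ and $d$ — unlike $d^*$, which is manipulated through means where Lemma \ref{lemlambdaright} does all the work, these are defined by combinatorial conditions on finite subsets, and the point $h[F]$ for $F \in \pf(S)$ can be strictly smaller than $|F|$ when $F$ meets a $\sim$-class more than once, so one must argue that choosing $F$ to be a transversal-like set (one element per relevant $\sim$-class, enlarged inside the collapsing right ideals) is harmless for computing the supremum. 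I anticipate that the already-proven sandwich $d \le d^* \le d_t$ together with the $d^*$-transfer identity and the surjectivity of $h$ will let me avoid the hardest direction of each combinatorial transfer, so that establishing just $d^*(A) = d^*_T(h[A])$ cleanly, plus the easy inequalities $d_{t,T}(h[A]) \le d_t(A)$ (pulling back translate-witnesses) and $d(A) \le d_T(h[A])$ (pushing forward Følner-witnesses), suffices to close the loop.
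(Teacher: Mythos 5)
Your plan hinges on transfer identities of the form $d^*(A)=d^*_T(h[A])$ (and likewise for $d$ and $d_t$), and these are false for sets that are not saturated with respect to $\sim$. A concrete counterexample sits in this very paper: in $(\pf(\ben),\cup)$ every two elements are equivalent (given $a,b$ take $x=a\cup b$, so $a\cup x=b\cup x$), hence the cancellative quotient $T$ is a single point; for ${\mathcal A}=\{X\in\pf(\ben):1\notin X\}$ one has $h[{\mathcal A}]=T$, so $d_T(h[{\mathcal A}])=d^*_T(h[{\mathcal A}])=d_{t,T}(h[{\mathcal A}])=1$, while $d({\mathcal A})=d^*({\mathcal A})=d_t({\mathcal A})=0$ (Theorem \ref{thmNotzero}). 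The only general facts are the one-sided inequality $d^*(h[A])\geq d^*(A)$ of Theorem \ref{thmdhA} and the equality $d^*(h^{-1}[B])=d^*(B)$ of Theorem \ref{thmdB} for saturated sets; the quotient simply destroys all information about a non-saturated $A$, and the same example shows that your hoped-for identification $\mu(\cchi_A)$ with a quantity computed from $h^{-1}[h[A]]$ cannot work, since $d^*(h^{-1}[h[{\mathcal A}]])=d^*(S)=1$. Moreover, even granting all three transfer identities, your closing bookkeeping does not close the loop: to get the missing inequality $d_t(A)\leq d(A)$ through $T$ you would need $d_t(A)\leq d_{t,T}(B)$ \emph{and} $d_T(B)\leq d(A)$ for the same $B$, whereas your ``easy inequalities'' $d_{t,T}(h[A])\leq d_t(A)$ and $d(A)\leq d_T(h[A])$ point the other way; combined with \cite[Theorem 3.5]{BG} on $T$ they only reproduce the already-known chain $d(A)\leq d^*(A)\leq d_t(A)$, and the inequality you would actually need, $d_T(h[A])\leq d(A)$, fails in the example above.

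The ingredients you identify (the cancellative quotient, a transversal, and the right ideal supplied by Lemma \ref{defR}) are the right ones, but the correct use of them is different: one should not move $A$ to $T$ at all. The paper's argument fixes $\eta<d_t(A)$, $H\in\pf(S)$ and $\epsilon>0$, takes a Følner-type set $F\in\pf(T)$ for $h[H]$, lifts it to $S$ by a choice function $g$ for $\sim$, and multiplies by an element $u$ of the right ideal $R$ of Lemma \ref{defR} chosen so that $g(y)u=s\,g(y')u$ whenever $y=h(s)y'$ with $s\in H$ and $y,y'\in F$; this makes $G_u=g[F]u$ an $(H,\epsilon)$-invariant subset of $S$ of cardinality $|F|$. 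The definition of $d_t(A)$ is then applied directly to $G_u$ inside $S$ to find $x$ with $|G_u\cap Ax^{-1}|\geq\eta|G_u|$, and since $ux\in R$ the right translate $G_ux=G_{ux}$ is again $(H,\epsilon)$-invariant and meets $A$ in proportion at least $\eta$, giving $d(A)\geq\eta$. No appeal to \cite[Theorem 3.5]{BG}, and no density transfer for $h[A]$, is needed; the quotient serves only as a machine for manufacturing invariant sets in $S$ whose translates remain invariant.
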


\begin{proof} By Theorem \ref{thmneteq} and Corollary \ref{dtgeqdstar} we have that
$d(A)\leq d^*(A)\leq d_t(A)$ so it suffices to show that $d_t(A)\leq d(A)$. 
To see this we will show that if $\eta<d_t(A)$, then $d(A)\geq \eta$.
So let $\eta<d_t(A)$ be given.

To see that $d(A)\geq \eta$, let $H\in\pf(S)$ and $\epsilon>0$ be given
and let ${\mathcal F}=\{K\in\pf(S):(\forall s\in H)(|K\setminus sK|<\epsilon\cdot|K|)\}$.
We shall show that there exists $K\in{\mathcal F}$ such that $|K\cap A|\geq\eta\cdot |K|$.

Let $g:T\to S$ be a choice function for $\sim$ and note that $h\big(g(t)\big)=t$ for every 
$t\in T$.  Observe that $g$ is injective and $h$ is injective on $g[T]$.
We claim that for each $s\in S$, $\rho_s$ is injective on $g[T]$. To see this, let
$a,b\in T$ and assume that $g(a)s=g(b)s$. Then 
$g(a)\sim g(b)$ so $a=h\big(g(a)\big)=h\big(g(b)\big)=b$.

 Now $h[H]\in\pf(T)$ and by Lemma \ref{lemquotient}, $T$ satisfies SFC, so
pick $F\in\pf(T)$ such that for all $s\in H$, $|F\setminus h(s)F|<\epsilon\cdot |F|$.
 Now $y\in F\cap h(s)F$ if and only if $y\in F$ and
$y=h(s)y'$ for some $y'\in F$. The equation 
$y=h(s)y'$ implies that $h\big(sg(y')\big)=h(s)h\big(g(y')\big)=h(s)y'=y=h\big(g(y)\big)$
so, by Lemma \ref{defR}, there 
is a right ideal $R$ of $S$ such that $g(y)u=sg(y')u$ for every $u \in R$, every $y,y'\in F$, and 
every $s\in H$ for which $y=h(s)y'$. 

For each $u\in R$, let $G_u=g[F]u$ and note that 
$|G_u|=|g[F]u|=|g[F]|=|F|$, because $\rho_u$ is injective on $g[F]$ and $g$ is injective.
We claim that for each $u\in R$, $G_u\in {\mathcal F}$. To see this, let
$u\in R$ and let $s\in H$. It suffices to show that $G_u\setminus sG_u
\subseteq \{g(y)u:y\in F\setminus h(s)F\}$, for then
$|G_u\setminus sG_u|\leq |F\setminus h(s)F|<\epsilon\cdot |F|=\epsilon\cdot |G_u|$.
So let $x\in G_u\setminus sG_u$ and pick $y\in F$ such that $x=g(y)u$. If we had
$y\in h(s)F$, there would be some $y'\in F$ such that $y=h(s)y'$ so
$x=g(y)u=sg(y')u$ and thus $x\in sG_u$.

Choose $u\in R$.  Since $d_t(A)>\eta$, we may pick $x\in S$ such that $|G_u\cap Ax^{-1}|\geq\eta\cdot|G_u|$.
 If $z\in G_u\cap Ax^{-1}$, then $zx\in G_ux\cap A$.  So $(G_u\cap Ax^{-1})x \subseteq G_ux\cap A$. 
Now $\rho_x$ is injective on $G_u$, because $\rho_{ux}$ is injective on $g[F]$. 
So $|G_ux\cap A|\geq|G_u\cap Ax^{-1}|$ and therefore
$|G_ux\cap A|\geq\eta\cdot|G_u|=\eta\cdot|G_ux|$. Since $G_ux=G_{ux}\in {\mathcal F}$, we are done.
\end{proof}

\begin{corollary}\label{Deltaideal} If $(S,\cdot)$ is a 
semigroup which satisfies SFC, then $\Delta(S)$ is a two sided ideal in $\beta S$.
\end{corollary}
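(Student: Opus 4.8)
The plan is to deduce this directly from the established equality $d = d^*$ and the already-proven fact that $\Delta^*(S)$ is a two-sided ideal of $\beta S$. By Theorem \ref{d=dstar}, for every subset $A$ of $S$ we have $d(A) = d^*(A)$, and in particular $d(A) > 0$ if and only if $d^*(A) > 0$. Comparing Definitions \ref{defDelta} and \ref{defDeltastar}, this says precisely that $\Delta(S) = \Delta^*(S)$ as subsets of $\beta S$. Since $S$ satisfies SFC, $S$ is left amenable (as noted after Definition \ref{defdensity}), so Theorem \ref{Delta*ideal} applies and tells us that $\Delta^*(S)$ is a two-sided ideal of $\beta S$. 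Hence $\Delta(S)$ is a two-sided ideal of $\beta S$.

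There is essentially no obstacle here: all the work has been done. One should take a moment to note why $\Delta(S) \ne \emptyset$, so that ``two sided ideal'' is not vacuous — but this is already guaranteed by Lemma \ref{lemKinDelta}, which gives $\cl K(\beta S) \subseteq \Delta(S)$, and $K(\beta S)$ is nonempty for any compact Hausdorff right topological semigroup. So the corollary is immediate. The only conceivable subtlety is making sure the hypothesis of Theorem \ref{Delta*ideal} (left amenability) is met, which follows from SFC as recalled above. I would write the proof in two sentences: invoke Theorem \ref{d=dstar} to get $\Delta(S) = \Delta^*(S)$, then invoke Theorem \ref{Delta*ideal}.
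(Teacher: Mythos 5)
Your proposal is correct and matches the paper's own argument, which cites exactly Theorems \ref{d=dstar} and \ref{Delta*ideal}; your added remarks on left amenability following from SFC and on nonemptiness via Lemma \ref{lemKinDelta} are fine but not needed.
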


\begin{proof} Theorems \ref{Delta*ideal} and \ref{d=dstar}.\end{proof}

\begin{corollary}\label{cordbdeltab} $\widetilde h[\Delta^*(S)]=\Delta(T)=\Delta^*(T)$ and
for each $B\subseteq T$, $d(B)=d^*(h^{-1}[B])$. \end{corollary}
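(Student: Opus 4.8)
The statement bundles together several claims, and the plan is to deduce each of them from results already established in the excerpt, in the right order. The chain $\widetilde h[\Delta^*(S)]=\Delta^*(T)$ is nothing other than Theorem \ref{thmdeltastar} applied to the projection homomorphism $h:S\to T$ (which is onto, since $T$ is a quotient of $S$). For the identity $\Delta^*(T)=\Delta(T)$: the quotient $T$ satisfies SFC by Lemma \ref{lemquotient}, so Theorem \ref{d=dstar} applies to $T$ and gives $d(B)=d^*(B)$ for every $B\subseteq T$; hence the two collections of ultrafilters defined by $d>0$ and by $d^*>0$ on members coincide, i.e.\ $\Delta(T)=\Delta^*(T)$. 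This handles the first displayed equation $\widetilde h[\Delta^*(S)]=\Delta(T)=\Delta^*(T)$.

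For the final assertion, that $d(B)=d^*(h^{-1}[B])$ for each $B\subseteq T$: Theorem \ref{thmdB} (applied to $h$) already gives $d^*(B)=d^*(h^{-1}[B])$, and then Theorem \ref{d=dstar} applied to $T$ replaces $d^*(B)$ by $d(B)$, yielding $d(B)=d^*(B)=d^*(h^{-1}[B])$. So every piece reduces to an already-proved theorem, and essentially nothing remains to be computed; the only care needed is to observe that all hypotheses (that $S$ satisfies SFC, that $T$ satisfies SFC, and that $h$ is a surjective homomorphism) are in force under the standing assumptions of this section.

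I do not anticipate a genuine obstacle here, since the corollary is purely a matter of assembling Theorems \ref{thmdeltastar}, \ref{thmdB}, and \ref{d=dstar} together with Lemma \ref{lemquotient}. If anything is slightly delicate, it is making sure the left amenability hypothesis needed for Theorems \ref{thmdeltastar} and \ref{thmdB} is available — but a semigroup satisfying SFC is left amenable (as noted after Definition \ref{defdensity}), so both $S$ and $T$ qualify. Thus the proof will be a three- or four-line citation chain.

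\begin{proof}
Recall that $h:S\to T$ is the projection onto the cancellative quotient, hence is a surjective homomorphism, and that $T$ satisfies SFC by Lemma \ref{lemquotient}; since both $S$ and $T$ satisfy SFC, both are left amenable. By Theorem \ref{d=dstar} applied to $T$, we have $d(B)=d^*(B)$ for every $B\subseteq T$, and therefore $\Delta(T)=\Delta^*(T)$. By Theorem \ref{thmdeltastar}, $\widetilde h[\Delta^*(S)]=\Delta^*(T)$, which gives $\widetilde h[\Delta^*(S)]=\Delta(T)=\Delta^*(T)$. Finally, for $B\subseteq T$, Theorem \ref{thmdB} gives $d^*(B)=d^*(h^{-1}[B])$, and combining this with $d(B)=d^*(B)$ yields $d(B)=d^*(h^{-1}[B])$.
\end{proof}
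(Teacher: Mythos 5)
Your proposal is correct and follows essentially the same route as the paper: apply Theorem \ref{d=dstar} (to $T$, which satisfies SFC by Lemma \ref{lemquotient}) to identify $\Delta(T)$ with $\Delta^*(T)$ and $d(B)$ with $d^*(B)$, then cite Theorems \ref{thmdeltastar} and \ref{thmdB}. Nothing is missing.
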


\begin{proof} By Theorem \ref{d=dstar}, $\Delta(S) = \Delta^*(S)$ and $\Delta(T) = \Delta^*(T)$.  The conclusions are then
immediate consequences of Theorems \ref{thmdB} and \ref{thmdeltastar}.
\end{proof}

\section{When minimal left ideals are singletons}

Hindman and Strauss have dealt with versions of this subject before. In \cite{HSx} they had a section
titled {\it Semigroups with isolated points in minimal left ideals\/}; in \cite{HSy} they
had a section titled {\it Finitely many minimal right ideals\/}; and in \cite{HSz} they
had a section titled {\it Finite minimal left ideals\/} and showed that $\beta S$ has 
finite minimal left ideals if and only if it has finitely many minimal right ideals.

In this paper, we arrive at this subject from a different direction and with
a particular interest in density, which the aforementioned works do not address. As 
we noted earlier, Bergelson and Glasscock showed in \cite[Theorem 3.5]{BG} that if
$S$ satisfies SFC and is either left cancellative or commutative, then for 
each $A\subseteq S$, $d(A)=d^*(A)$. Also, it is an old fact noted by Klawe in \cite[Corollary 2.3]{K}
that if $S$ satisfies SFC and is right cancellative, then $S$ is also left cancellative. 
At that point we were actively considering the possibility of finding a
counterexample to the assertion that $d$ and $d^*$ are always equal.
So to find a semigroup for which $d$ and $d^*$ are not equal, we needed
a semigroup which satisfies SFC and is not commutative and not left or right cancellative.
The following simple observation provides such examples.

\begin{lemma}\label{lemRTzero} Let $(S,\cdot)$ be a semigroup. If 
$S$ has a right zero, then $S$ satisfies SFC. \end{lemma}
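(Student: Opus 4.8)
The plan is to verify the Strong F\o lner Condition directly for a semigroup $S$ with a right zero. Suppose $z \in S$ is a right zero, so that $sz = z$ for every $s \in S$. Given $H \in \pf(S)$ and $\epsilon > 0$, I would simply take $K = \{z\}$. Then for every $s \in H$ we have $sK = \{sz\} = \{z\} = K$, so $K \setminus sK = \emp$, and therefore $|K \setminus sK| = 0 < \epsilon \cdot 1 = \epsilon \cdot |K|$. This witnesses SFC with no further work.

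In fact this observation is almost trivially short, so the only thing worth thinking about is whether one wants to phrase it slightly more robustly (for instance noting that any finite set $K$ contained in the set of right zeros, or more generally any set $K$ with $sK = K$ for all $s \in H$, works equally well). But the one-point set $\{z\}$ is the cleanest witness and requires nothing beyond the definition of a right zero.

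There is no real obstacle here; the lemma is a direct application of Definition \ref{defSFC}(b). The point of stating it, as the surrounding text explains, is that it furnishes a large supply of semigroups satisfying SFC that are neither commutative nor (one- or two-sided) cancellative — for example, take any semigroup and freely adjoin a right zero, or take a nontrivial right-zero semigroup itself — which is exactly the setting needed to probe whether $d$ and $d^*$ can differ.

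\begin{proof} Let $z$ be a right zero of $S$, so that $sz = z$ for every $s \in S$. Given $H \in \pf(S)$ and $\epsilon > 0$, let $K = \{z\}$. For each $s \in H$ we have $sK = \{sz\} = \{z\} = K$, so $K \setminus sK = \emp$ and hence $|K \setminus sK| = 0 < \epsilon \cdot |K|$. Thus $S$ satisfies SFC. \end{proof}
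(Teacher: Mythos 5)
Your proof is correct and is essentially identical to the paper's: both take $K=\{z\}$ and observe that $sK=K$ for every $s\in H$, so $|K\setminus sK|=0<\epsilon\cdot|K|$. Nothing further is needed.
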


\begin{proof} Let $z$ be a right zero in $S$. Let $H\in\pf(S)$, and
let $\epsilon>0$. Let $K=\{z\}$. Then for all $s\in H$, $sK=K$ so
$|K\setminus sK|=0<\epsilon\cdot|K|$. \end{proof}

If $S$ is the semigroup of $2\times 2$ matrices over the 
set $\omega$ of nonnegative integers, then $S$ satisfies SFC, is not commutative, and
is neither  right nor left cancellative. 

\begin{lemma}\label{lemDeltasubAbar} Let $(S,\cdot)$ be a semigroup which satisfies SFC,
let $A\subseteq S$, and assume that for each $\nu\in LIM_0(S)$, $\nu(\cchi_A)=1$.
Then $\Delta(S)\subseteq \overline A$.\end{lemma}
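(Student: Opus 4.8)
The plan is to show the contrapositive-style statement directly: if $p \in \Delta(S)$ but $p \notin \overline{A}$, then we can produce a member of $LIM_0(S)$ assigning $\cchi_A$ a value strictly less than $1$. Since $p \notin \overline{A}$ means $S \setminus A \in p$, and since $p \in \Delta(S)$, we have $d(S \setminus A) > 0$. By Theorem~\ref{thmmax}, there exists $\nu \in LIM_0(S)$ with $\nu(\cchi_{S \setminus A}) = d(S \setminus A) > 0$, and hence $\nu(\cchi_A) = 1 - \nu(\cchi_{S\setminus A}) < 1$, contradicting the hypothesis that every element of $LIM_0(S)$ assigns $\cchi_A$ the value $1$.

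First I would invoke Definition~\ref{defDelta}: since $p \in \Delta(S)$, every member of $p$ has positive F\o lner density. The set $S \setminus A$ is a member of $p$ precisely because $p \notin \overline{A}$ (as $\overline{A} = \{q \in \beta S : A \in q\}$, so $A \notin p$, so $S \setminus A \in p$ since $p$ is an ultrafilter). Therefore $d(S \setminus A) > 0$.

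Next I would apply Theorem~\ref{thmmax}, which gives $d(S \setminus A) = \max\{\nu(\cchi_{S\setminus A}) : \nu \in LIM_0(S)\}$, so the maximum is attained: there is $\nu \in LIM_0(S)$ with $\nu(\cchi_{S\setminus A}) = d(S\setminus A) > 0$. Since $\nu$ is a mean and is additive (finitely additive on characteristic functions, using $\cchi_S = \cchi_A + \cchi_{S\setminus A}$ and $\nu(\cchi_S) = 1$), we get $\nu(\cchi_A) = 1 - \nu(\cchi_{S\setminus A}) < 1$. This contradicts the assumption that $\nu(\cchi_A) = 1$ for every $\nu \in LIM_0(S)$. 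Hence no such $p$ exists, i.e., $\Delta(S) \subseteq \overline{A}$.

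I do not expect any real obstacle here; the argument is a short bookkeeping exercise once Theorem~\ref{thmmax} is in hand. The only point requiring a moment's care is ensuring the maximum in Theorem~\ref{thmmax} is genuinely attained (it is, since the statement says $d(A) = \max$, not $\sup$), and that additivity of $\nu$ on the two complementary characteristic functions is legitimate, which follows from $\nu$ being a mean. Everything else is immediate from the definitions of $\Delta(S)$ and $\overline{A}$.
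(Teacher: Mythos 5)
Your proposal is correct and follows essentially the same route as the paper: take $p\in\Delta(S)\setminus\overline A$, note $S\setminus A\in p$ gives $d(S\setminus A)>0$, produce $\nu\in LIM_0(S)$ with $\nu(\cchi_{S\setminus A})>0$, and contradict $\nu(\cchi_A)=1$ via additivity of the mean. The only cosmetic difference is that you cite Theorem~\ref{thmmax} where the paper cites Theorem~\ref{thmneteq}; either suffices.
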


\begin{proof} Suppose we have some $p\in\Delta(S)\setminus\overline A$.
Then $S\setminus A\in p$ so $d(S\setminus A)>0$ so by Theorem
\ref{thmneteq} we may pick $\nu\in LIM_0(S)$ such that $\nu(\cchi_{S\setminus A})>0$.
But since $\nu(\cchi_A)=1$, one must have $\nu(\cchi_{S\setminus A})=0$, a
contradiction.
\end{proof}

\begin{lemma}\label{lemcSsubC} Let $(S,\cdot)$ be a semigroup which satisfies SFC,
let $C$ be a compact subset of $\beta S$, and assume that there exists
$x\in S$ such that $xS\subseteq C$. Then $\Delta(S)\subseteq C$.\end{lemma}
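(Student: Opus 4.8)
The plan is to reduce to the earlier Lemma~\ref{lemDeltasubAbar} by taking $A=xS$. The key observation is that $xS$ is a right ideal of $S$, since $(xS)S=x(SS)\subseteq xS$. Consequently, by Lemma~\ref{lemlambdaright}, every left invariant mean on $S$ assigns $\cchi_{xS}$ the value $1$. Since $S$ satisfies SFC, Lemma~\ref{lemLIM0LIM} gives $LIM_0(S)\subseteq LIM(S)$, so in particular $\nu(\cchi_{xS})=1$ for every $\nu\in LIM_0(S)$. Lemma~\ref{lemDeltasubAbar}, applied with $A=xS$, then yields $\Delta(S)\subseteq\overline{xS}=\cl_{\beta S}(xS)$.

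It remains to invoke the hypothesis on $C$. Since $C$ is a compact subset of the Hausdorff space $\beta S$, it is closed; and since $xS\subseteq C$, it follows that $\cl_{\beta S}(xS)\subseteq C$. Combining this with the previous paragraph gives $\Delta(S)\subseteq\overline{xS}\subseteq C$, which is the desired conclusion.

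I do not expect a serious obstacle here: the argument is a short assembly of Lemmas~\ref{lemLIM0LIM}, \ref{lemlambdaright}, and~\ref{lemDeltasubAbar}, together with the elementary fact that a compact subset of $\beta S$ is closed and hence contains the $\beta S$-closure of any subset of $S$ lying in it. The only mildly non-obvious point is recognizing that the set to feed into Lemma~\ref{lemDeltasubAbar} is $xS$, and that this set, being a right ideal, automatically has F\o lner density $1$. If one prefers a route that does not cite Lemma~\ref{lemDeltasubAbar}, one can argue directly: given $p\in\Delta(S)$, if $p\notin\overline{xS}$ then $S\setminus xS\in p$, so $d(S\setminus xS)>0$, and Theorem~\ref{thmneteq} produces $\nu\in LIM_0(S)$ with $\nu(\cchi_{S\setminus xS})>0$, contradicting $\nu(\cchi_{xS})=1$; hence $p\in\overline{xS}\subseteq C$.
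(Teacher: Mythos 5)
Your proposal is correct and follows essentially the same route as the paper: take $A=xS$, use Lemma~\ref{lemlambdaright} (via Lemma~\ref{lemLIM0LIM}) to see every $\nu\in LIM_0(S)$ gives $\nu(\cchi_{xS})=1$, apply Lemma~\ref{lemDeltasubAbar}, and conclude with $\overline{xS}\subseteq C$ since $C$ is closed. You merely make explicit the final closure step that the paper leaves implicit.
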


\begin{proof} Pick $x\in S$ such that $xS\subseteq C$, and let $A=xS$. By
Lemma \ref{lemDeltasubAbar}, it suffices to show that for each $\mu\in LIM_0(S)$, $\mu(\cchi_A)=1$.
In fact, for each $\mu\in LIM(S)$, $\mu(\cchi_A)=1$. Since $A$ is a right ideal of $S$, this follows
immediately from Lemma \ref{lemlambdaright}.
\end{proof}

\begin{lemma}\label{lemapbp} Let $(S,\cdot)$ be a semigroup which satisfies SFC.
Let $a,b\in S$, and assume that there exists $x\in S$ such that $ax=bx$.
Then for all $p\in\Delta(S)$, $ap=bp$.\end{lemma}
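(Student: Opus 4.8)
The statement to prove is Lemma \ref{lemapbp}: if $S$ satisfies SFC, $a,b \in S$, and there is some $x \in S$ with $ax = bx$, then $ap = bp$ for every $p \in \Delta(S)$. The natural approach is to show that the set $R = \{u \in S : au = bu\}$ is ``large'' enough that every $A \in ap$ (equivalently, every $A$ with $a^{-1}A \in p$) must meet $S \setminus A$'s complement appropriately. More precisely, the plan is to use the fact that $R$ is a right ideal of $S$ (it contains $x$, and if $au = bu$ then $aut = but$ for all $t$), together with the key density fact from Lemma \ref{lemlambdaright} (in its density form) that right ideals have full density, hence every right ideal belongs to every member of $\Delta(S)$.

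First I would set $R = \{u \in S : au = bu\}$ and observe $x \in R$, so $R \neq \emp$, and that $R$ is a right ideal. By Theorem \ref{d=dstar}, $d = d^*$ on $S$, and by Lemma \ref{lemlambdaright} every left invariant mean assigns value $1$ to any right ideal, so $d(R) = d^*(R) = 1$; hence $d(S \setminus R) = 0$. This shows $S \setminus R \notin p$ for any $p \in \Delta(S)$ (since members of $p \in \Delta(S)$ have positive density), i.e.\ $R \in p$ for every $p \in \Delta(S)$. Alternatively, and more directly, Lemma \ref{lemcSsubC} gives $\Delta(S) \subseteq \overline{xS}$ — but what I actually want is $\Delta(S) \subseteq \overline R$; since $R \supseteq xS$ is itself a right ideal with $R = xR \cup (\text{stuff})$... cleaner to just argue $d(S\setminus R)=0$ as above, or invoke Lemma \ref{lemDeltasubAbar} with $A = R$ after checking $\nu(\cchi_R) = 1$ for all $\nu \in LIM_0(S)$, which is immediate from Lemma \ref{lemlambdaright} since $LIM_0(S) \subseteq LIM(S)$ by Lemma \ref{lemLIM0LIM}.

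Now fix $p \in \Delta(S)$; I want $ap = bp$. Since $\beta S$ is Hausdorff, it suffices to show that $A \in ap \iff A \in bp$ for all $A \subseteq S$, equivalently $a^{-1}A \in p \iff b^{-1}A \in p$ (using the characterization of membership in $ap = \lambda_a(p)$, namely $A \in ap$ iff $a^{-1}A \in p$). Here is where $R \in p$ does the work: on $R$, left multiplication by $a$ and by $b$ agree, so $a^{-1}A \cap R = b^{-1}A \cap R$. Indeed, for $u \in R$ we have $au = bu$, so $au \in A \iff bu \in A$, i.e.\ $u \in a^{-1}A \iff u \in b^{-1}A$. Therefore $a^{-1}A \cap R = b^{-1}A \cap R$, and since $R \in p$, the ultrafilter $p$ contains $a^{-1}A$ iff it contains $a^{-1}A \cap R = b^{-1}A \cap R$ iff it contains $b^{-1}A$. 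Hence $ap$ and $bp$ contain exactly the same sets, so $ap = bp$.

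**Main obstacle.** There is no serious obstacle: the content is entirely in knowing $R \in p$ for $p \in \Delta(S)$, which reduces to the density-$1$ property of right ideals (Lemma \ref{lemlambdaright} plus $d = d^*$ from Theorem \ref{d=dstar}, or equivalently Lemmas \ref{lemLIM0LIM} and \ref{lemDeltasubAbar}). The only point requiring a little care is the bookkeeping with $a^{-1}A$ versus $Aq^{-1}$-style shifts and the membership criterion $A \in ap \iff a^{-1}A \in p$; once that is in hand, the equality $a^{-1}A \cap R = b^{-1}A \cap R$ is a one-line pointwise check, and the conclusion follows because an ultrafilter containing $R$ cannot distinguish two sets with the same intersection with $R$.
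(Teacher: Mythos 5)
Your proof is correct, and it rests on the same underlying density fact as the paper's: a right ideal has full measure under every left invariant mean (Lemma \ref{lemlambdaright}), hence $\Delta(S)$ lies inside the closure of any right ideal (Lemma \ref{lemDeltasubAbar}). Where you diverge is in how the conclusion $ap=bp$ is extracted. The paper's proof is purely topological: it takes the equalizer $C=\{p\in\beta S: ap=bp\}$, which is compact because $\lambda_a,\lambda_b:\beta S\to\beta S$ are continuous and $\beta S$ is Hausdorff, notes $xS\subseteq C$, and applies Lemma \ref{lemcSsubC} to get $\Delta(S)\subseteq C$. You instead work with $R=\{u\in S: au=bu\}$, show $R\in p$ for every $p\in\Delta(S)$, and then verify $ap=bp$ by the membership computation $A\in ap\iff a^{-1}A\in p$ together with $a^{-1}A\cap R=b^{-1}A\cap R$; in effect you are proving by hand the statement "$R\in p$ implies $ap=bp$" that the paper gets for free from continuity and compactness. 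Your version is slightly more elementary (no appeal to the topology of $\beta S$ beyond the ultrafilter description of the product), at the cost of a few lines of bookkeeping. One small remark on your first suggested route: invoking Theorem \ref{d=dstar} is legitimate here (its proof does not depend on this lemma, so there is no circularity), but it is heavier than needed — the inequality $d\leq d^*$ from Theorem \ref{thmneteq} already gives $d(S\setminus R)=0$, and your second route through Lemmas \ref{lemLIM0LIM} and \ref{lemDeltasubAbar} matches the paper's dependency structure exactly and is preferable, since the paper uses this lemma (via Lemma \ref{lemxpyp}) before $d=d^*$ is established.
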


\begin{proof}  Pick $x\in S$ such that $ax=bx$. Let $C=\{p\in\beta S:ap=bp\}$.
Then $C$ is compact and $xS\subseteq C$ so by Lemma \ref{lemcSsubC}, $\Delta(S)\subseteq C$.\end{proof}

The following lemma does not need the assumption that $S$ satisfies SFC.

\begin{lemma}\label{lemsteqt} Let $(S,\cdot)$ be a semigroup. Assume that
the minimal left ideals of $\beta S$ are singletons and $p\in K(\beta S)$.
Then for every $s\in S$, $\{t\in S:st=t\}\in p$.\end{lemma}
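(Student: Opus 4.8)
The plan is to exploit the fact that each minimal left ideal of $\beta S$ is a singleton, together with the idempotent structure of $K(\beta S)$. First I would note that since $p \in K(\beta S)$ and $K(\beta S)$ is the union of the minimal left ideals, $p$ lies in some minimal left ideal $L$ of $\beta S$; by hypothesis $L = \{p\}$. Since every minimal left ideal of a compact right topological semigroup contains an idempotent, $p$ itself must be that idempotent, so $pp = p$. More to the point, for every $s \in S$ we have $sp \in \beta S \cdot p \subseteq L = \{p\}$ (using that $L$ is a left ideal), so $sp = p$ for every $s \in S$. This is the structural fact I want to translate into a statement about members of $p$.

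Next I would unpack what $sp = p$ means at the level of ultrafilters. Fix $s \in S$ and let $A = \{t \in S : st = t\}$; I want to show $A \in p$. Consider the continuous map $\lambda_s : \beta S \to \beta S$; we have $\lambda_s(p) = sp = p$. Now I would compare this with the identity map and use a standard fact about $\beta S$: for a set $B \subseteq S$, $B \in sp$ if and only if $s^{-1}B = \{t : st \in B\} \in p$ (this is the $t=s$, $q=p$ special case of \cite[Theorem 4.12]{HS} as invoked in Lemma \ref{lemma_algebraic_properties_of_ultrafilter_shift}, and also just the definition of $\lambda_s$ extended to $\beta S$ acting on principal data). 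So $sp = p$ says precisely that for every $B \subseteq S$, $s^{-1}B \in p \iff B \in p$.

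The remaining step is to deduce $\{t : st = t\} \in p$ from the equivalence $s^{-1}B \in p \iff B \in p$ for all $B$. The natural approach is a pointwise/ultrafilter argument: suppose for contradiction that $A = \{t : st = t\} \notin p$, so $S \setminus A = \{t : st \neq t\} \in p$. For $t$ with $st \neq t$, the points $st$ and $t$ can be separated by a subset of $S$; I would like to build a single set $B \in p$ such that $s^{-1}B$ and $B$ differ "on $p$." One clean way: since $sp = p$ but we are assuming $s$ does not act as the identity $p$-almost-everywhere, partition $S \setminus A$ appropriately or invoke that the two ultrafilters $sp$ and $p$, being equal, force any set $B$ to satisfy $\{t : st \in B\} \triangle B \notin p$ — then choose $B$ to witness $st \ne t$ for $p$-many $t$. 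Concretely, I expect the argument to run: if $S\setminus A \in p$, well-order or otherwise select a set $B$ with $B \cap (S\setminus A)$ and $s^{-1}B \cap (S \setminus A)$ complementary inside $S \setminus A$, contradicting $B \in p \iff s^{-1}B \in p$.

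The main obstacle is precisely this last combinatorial step — producing the separating set $B$ from the assumption $\{t : st \ne t\} \in p$. The subtlety is that $st \ne t$ pointwise does not immediately hand you a set $B$ with $B \in p$ but $s^{-1}B \notin p$; one must choose $B$ cleverly, essentially using the axiom of choice to pick, for each $t$ in a suitable class, a "side" that makes $st$ land outside $B$ while $t$ lands inside (or vice versa), and then argue that one of the resulting two sets lies in $p$ while its $s^{-1}$-preimage does not. I would organize this by considering the relation "$st \ne t$" and using a Zorn/transfinite-recursion selection so that the set $B$ I construct satisfies $t \in B \iff st \notin B$ for all relevant $t$; then $B \in p \iff s^{-1}B \in p$ is violated because on the $p$-large set $S\setminus A$ the sets $B$ and $s^{-1}B$ are disjoint, forcing both to be small — but their union is $p$-large, a contradiction. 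Everything else (the idempotent existence, $sp=p$, the translation identity) is standard and quick.
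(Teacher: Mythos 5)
Your first step is fine and is the same as the paper's: $p$ lies in a minimal left ideal $L$, which by hypothesis is the singleton $\{p\}$, so $sp\in\beta S\,p\subseteq L=\{p\}$, i.e.\ $sp=p$ for every $s\in S$ (you do not even need $p$ to be idempotent for this). The gap is in the second step, which is exactly the nontrivial content of the lemma: deducing $\{t\in S:st=t\}\in p$ from $sp=p$. Your plan is, assuming $S\setminus A=\{t:st\neq t\}\in p$, to construct a single set $B$ with $t\in B\iff st\notin B$ for all $t\in S\setminus A$, i.e.\ a two-colouring in which every non-fixed $t$ and its image $st$ receive different colours. Such a $B$ need not exist: if $\lambda_s$ has an odd cycle among the non-fixed points, say $st_1=t_2$, $st_2=t_3$, $st_3=t_1$ with $t_1,t_2,t_3$ distinct, then the conditions $t_1\in B\iff t_2\notin B$, $t_2\in B\iff t_3\notin B$, $t_3\in B\iff t_1\notin B$ are jointly inconsistent. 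Odd cycles are perfectly compatible with the hypotheses of the lemma (adjoin a two-sided zero to the group $\bez_3$: the resulting semigroup has singleton minimal left ideals, yet $\lambda_1$ cycles the three group elements), so no transfinite recursion or Zorn argument can produce your $B$ in general. There is also a small slip at the end: if $B$ and $s^{-1}B$ were disjoint and covered a $p$-large set, exactly one of them would lie in $p$ rather than ``both being small''; but the essential problem is that $B$ need not exist.

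What is true, and what the paper invokes, is the fixed-point theorem for $\beta S$ \cite[Theorem 3.35]{HS}: for any $f:S\to S$ and $p\in\beta S$, $\widetilde f(p)=p$ if and only if $\{t\in S:f(t)=t\}\in p$. Its proof replaces your two-colouring by a three-set partition: a fixed-point-free map admits a partition of its domain into three sets $B_0,B_1,B_2$ with $B_i\cap f^{-1}[B_i]=\emp$ for each $i$; one of these (intersected with $S\setminus A$) belongs to $p$, while $sp=p$ forces $B_i\in p\iff s^{-1}B_i\in p$, a contradiction. The paper's proof of the lemma is exactly your first step followed by a citation of that theorem, so your outline identifies the correct reduction, but the combinatorial step you sketch to finish it would fail; you need the three-set lemma (or simply the citation) in place of a two-set splitting.
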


\begin{proof} Let $s\in S$. Since $\{p\}$ is a left ideal, $sp=p$ so
by \cite[Theorem 3.35]{HS}, $\{t\in S:st=t\}\in p$.\end{proof}

Observe that examples of semigroups which satisfy statement 
(1) of Theorem \ref{impliesSFC} are abundant because this class of 
semigroups includes all semilattices and all semigroups which have a right zero.

\begin{theorem}\label{impliesSFC} For every semigroup $(S,\cdot)$, statements (1)-(6) are equivalent.
Each of statements (1)-(6) implies statements (7) and (8).
\begin{itemize}
\item[(1)] For every $a,b\in S$, there exists $x\in S$ such that $ax=bx$.
\item[(2)] For every $H\in \pf(S)$, there exists $x\in S$ such that $ax=bx$ for every $a,b\in H$.
\item[(3)] The minimal left ideals of $\beta S$ are singletons.
\item [(4)] For every $p\in K(\beta S)$ and every $q\in \beta S$, $qp=p$.
\item[(5)]  $K(\beta S)$ is a right zero semigroup, that is, all elements of $K(\beta S)$ are right zeros of $K(\beta S)$.
\item[(6)] The semigroup $S$ satisfies SFC and for every subset $A$ of $S$, $d(A)\in \{0,1\}$.
\item[(7)] Every member of $K(\beta S)$ is idempotent.
\item[(8)] The semigroup $S$ satisfies SFC and $K(\beta S)=\Delta(S)$.
\end{itemize}  
\end{theorem}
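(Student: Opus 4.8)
The plan is to establish the cycle of implications $(1)\Rightarrow(2)\Rightarrow(3)\Rightarrow(4)\Rightarrow(5)\Rightarrow(1)$, then close the loop through $(6)$, and finally derive $(7)$ and $(8)$ from the equivalent conditions. For $(1)\Rightarrow(2)$ I would induct on $|H|$: given $H=\{a_1,\dots,a_n\}$ and having found $x$ with $a_ix=a_jx$ for all $i,j<n$, apply $(1)$ to the pair $a_{n-1}x$ and $a_nx$ to get $y$ with $a_{n-1}xy=a_nxy$, and check $xy$ works for all of $H$; here I use that $a_i x = a_j x$ implies $a_i xy = a_j xy$. For $(2)\Rightarrow(3)$: let $L$ be a minimal left ideal, pick an idempotent $e\in L$, and show $L=\{e\}$. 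Given any $p\in L$, for each $H\in\pf(S)$ the set $E_H=\{t\in S: at=bt \text{ for all }a,b\in H\}$ is nonempty by $(2)$ and is easily seen to be in every $q\in K(\beta S)$ (by \cite[Theorem 3.35]{HS}, since for $a,b\in H$, $ap'=bp'$ for any $p'$ in a minimal left ideal — wait, I need this more carefully); the cleaner route is: by $(2)$, for any $a,b\in S$ the right ideal $\{t:at=bt\}$ meets every minimal left ideal, so all elements of a minimal left ideal are "absorbed" and one shows $ap=bp$ for all $a,b\in S$ and $p\in L$; applied with $b=e\cdot(\text{something})$ this forces $L$ to be a single point. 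The details of $(2)\Rightarrow(3)$ will need the fact that $\overline{E_H}$ is a closed two-sided-ish subset and Lemma \ref{lemsteqt}-style reasoning, so I would organize it around showing $qp=p$ directly.

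For $(3)\Rightarrow(4)$: if $p\in K(\beta S)$ then $\{p\}$ is a minimal left ideal by $(3)$, so $\beta S\,p=\{p\}$, giving $qp=p$ for all $q\in\beta S$. For $(4)\Rightarrow(5)$: any $p,p'\in K(\beta S)$ satisfy $p'p=p$ by $(4)$, which says exactly that every element of $K(\beta S)$ is a right zero of $K(\beta S)$. For $(5)\Rightarrow(1)$: take any $a,b\in S$ and any $q\in K(\beta S)$; then $aq$ and $bq$ both lie in $K(\beta S)$ (as $K(\beta S)$ is an ideal), so by $(5)$, $(aq)(bq)=bq$ and $(bq)(aq)=aq$; combining, $aq=bq$, and since $S$ is in the topological center, $aq=bq$ with $q\in\beta S$ means $a^{-1}C\in q\iff b^{-1}C\in q$ for all $C$; picking $C$ appropriately — actually more directly, $aq=bq$ in $\beta S$ forces the set $\{x\in S: ax=bx\}\in q$, hence is nonempty. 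That last step is \cite[Theorem 3.35]{HS} again, applied to $aq=bq\iff a^{-1}(\text{nbhd})=b^{-1}(\ldots)$; I expect this is the one spot needing care.

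To fold in $(6)$: for $(3)\Rightarrow(6)$, note $(3)$ plus Lemma \ref{lemRTzero}/\ref{lemapbp} machinery is not quite enough for SFC on its own — but $(1)$ says $aS\cap bS$-type sets are nonempty; in fact $(1)$ with $a=b$ is vacuous, so I should get SFC another way. Actually, from $(5)$, $K(\beta S)$ is a right zero semigroup, which in particular gives that $S$ has "almost right zeros"; the honest path is: $(1)$ implies every $\{t:at=bt\}$ is a nonempty right ideal, and taking $H\in\pf(S)$, $(2)$ gives $x$ with $Hx$ a single point, so $\{x\}$ itself witnesses the Følner set $K=\{x\}$? No: $|K\setminus sK|=|\{x\}\setminus\{sx\}|$ which is $0$ iff $sx=x$, not guaranteed. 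Instead I would use $(4)$: pick $p\in K(\beta S)$; then for $s\in H$, $sp=p$, so by Lemma \ref{lemsteqt} $\{t:st=t\}\in p$, and intersecting over the finite set $H$ gives a nonempty set of common fixed points, each such $t$ gives $K=\{t\}$ with $sK=K$ for all $s\in H$ — this is SFC. Then for the density claim: given $A$, if $d(A)>0$ then (by the Følner-net description and $K=\{t\}$ singletons) for the $H,\epsilon$ above the only Følner sets forced are singletons $\{t\}$ with $Ht=\{t\}$, and $|A\cap\{t\}|\ge\alpha$ forces $t\in A$, so $d(A)=1$; conversely $(6)\Rightarrow(3)$ by contrapositive, if some minimal left ideal has $\ge 2$ points one manufactures a set of density strictly between $0$ and $1$ using Theorem \ref{thmmax}. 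Finally $(7)$ follows from $(5)$ since a right zero of a semigroup containing it is idempotent ($pp=p$ when $p$ is a right zero), and $(8)$ follows because $(6)$ gives SFC and $d(A)\in\{0,1\}$, whence $\Delta(S)=\bigcap\{\overline{A}: d(A)=1\}$, and one shows this equals $K(\beta S)$ using Lemma \ref{lemKinDelta} for one inclusion and, for the other, that $p\notin K(\beta S)$ yields $A\in p$ with $\overline A\cap K(\beta S)=\emptyset$, i.e. $A$ not piecewise syndetic, and a not-piecewise-syndetic set has $d(A)\ne 1$ by Theorem \ref{thick}(3) since its complement is thick — hence $d(A)=0$ and $p\notin\Delta(S)$. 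The main obstacle I anticipate is $(2)\Rightarrow(3)$, specifically getting from "finitely many elements agree after right multiplication" to "the minimal left ideal is literally a point" cleanly via \cite[Theorem 3.35]{HS}; everything else is bookkeeping.
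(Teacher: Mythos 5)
Your skeleton matches the paper's ((1)$\Rightarrow$(2)$\Rightarrow$(3)$\Rightarrow$(4)$\Rightarrow$(5)$\Rightarrow$(1), then fold in (6), then deduce (7) and (8)), but several steps are genuinely gapped, starting with the one you flag yourself. For (2)$\Rightarrow$(3), knowing that the closed right ideal $\cl\{t\in S:at=bt\}$ meets every minimal left ideal $L$ only produces \emph{one} point of $L$ at which $\lambda_a$ and $\lambda_b$ agree; it does not give $ap=bp$ for all $p\in L$, so ``all elements of a minimal left ideal are absorbed'' is not a deduction. The missing idea is short: pick $x_H$ as in (2) for each $H\in\pf(S)$ and let $p$ be a cluster point of the net $\langle x_H\rangle_{H\in\pf(S)}$ (equivalently, a point of the nonempty compact set $\bigcap_{a,b\in S}\{q\in\beta S:aq=bq\}$); then $ap=bp$ for all $a,b\in S$, so $|Sp|=1$ and $\beta Sp=\cl(Sp)$ is a singleton left ideal, and since all minimal left ideals are isomorphic, (3) follows. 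A second genuine error is in (5)$\Rightarrow$(1): from $(aq)(bq)=bq$ and $(bq)(aq)=aq$ you cannot conclude $aq=bq$ (a two-element right zero semigroup satisfies both identities with $aq\neq bq$), and the step ``$aq=bq$ forces $\{x\in S:ax=bx\}\in q$'' is not what \cite[Theorem 3.35]{HS} asserts and is false for general pairs of maps. The repair is the paper's: $q$ is idempotent because it is a right zero of $K(\beta S)$, so $aq=(aq)q=q=bq$, and then the fixed-point form of \cite[Theorem 3.35]{HS} gives $\{t:at=t\}\in q$ and $\{t:bt=t\}\in q$; any $x$ in the intersection satisfies $ax=x=bx$.

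The density half of (6) is also unproved: the definition of $d$ does not force the witnessing sets to be singletons, since an $(H,\epsilon)$-invariant $K$ need not consist of common fixed points, so ``the only F\o lner sets forced are singletons $\{t\}$ with $Ht=\{t\}$'' is unjustified. The paper instead shows $d^*(A)>0$ implies $A$ is thick: take $\lambda\in LIM(S)$ with $\lambda(A)>0$; for $F\in\pf(S)$ the set $R=\bigcap_{s\in F}\{t:st=t\}$ lies in any $p\in K(\beta S)$ by Lemma \ref{lemsteqt}, is a right ideal, hence $\lambda(R)=1$ by Lemma \ref{lemlambdaright}, so some $t\in A\cap R$ gives $Ft=\{t\}\subseteq A$; thickness then yields $d(A)=1$. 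Your (6)$\Rightarrow$(3) is only a one-line sketch, whereas the paper gets a concrete contradiction from disjoint $P\in p$, $Q\in sp$ using Theorem \ref{thmmax}. Finally, for (8) the identity $\Delta(S)=\bigcap\{\overline A:d(A)=1\}$ is false (in an infinite right zero semigroup every nonempty set has density $1$, so that intersection is empty while $\Delta(S)=\beta S$), and your argument that $p\notin K(\beta S)$ yields $A\in p$ with $\overline A\cap K(\beta S)=\emp$ tacitly assumes $K(\beta S)$ is closed; under (3) this is true because $K(\beta S)=\bigcap_{s\in S}\{p:sp=p\}$, but that needs to be said and proved. The paper sidesteps it by showing directly that $ap=p$ for every $a\in S$ and $p\in\Delta(S)$ via Lemma \ref{lemcSsubC}, so that $\{p\}$ is itself a minimal left ideal.
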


\begin{proof} To show that (1) implies (2), assume that (1) holds. We shall show that (2) holds by induction on $|H|$.
Assume that  that $n>2$ is an integer, and that (2) holds for every $H\in \pf(S)$ for which $|H| <n$.
 Choose $H\in \pf(S)$ with $|H|=n$, and choose any $a\in H$.  There exists $x\in S$ such that $bx=cx$ for every 
$b,c\in H\setminus\{a\}$. Choose any $b\in H\setminus \{a\}$, and then choose $y\in S$ such that $ay=by$. There exists $z \in S$
such that $xz=yz$. So $bxz=cxz$ for every $c\in H\setminus \{a\}$, and $ayz=byz$. If $w=xz=yz$, then $cw=dw$ for every $c,d\in H$.

We now show that (2)  implies (3). Assume that (2) holds. 
For each $H\in\pf(S)$, pick $x_H\in S$ such that for all $a$ and $b$ in $H$, $ax_H=bx_H$. The relation $\subseteq$ directs $\pf(S)$.
Let $p$ be a cluster point of the net $\langle x_H\rangle_{H\in\pf(S)}$ in $\beta S$. Then $ap=bp$ for all $a$ and $b$ in $S$,
so $|Sp|=1$ and $\beta Sp=\cl(Sp)$ so $|\beta S p|=1$. Thus $\beta S$ has 
a minimal left ideal which is a singleton. Since all minimal left ideals are
isomorphic, all minimal left ideals are singletons.

That (3) implies (4) and that (4) implies (5) are both trivial.

To see that (5) implies (1), pick $p\in K(\beta S)$. Let $a,b\in S$. Then $ap$ and $bp$ are in $K(\beta S)$
so $ap=a(pp)=(ap)p=p$ and $bp=p$, so by Lemma \ref{lemsteqt}, $\{t\in S:at=t\}\in p$ and $\{t\in S:bt=t\}\in p$.
Choose $x$ in the intersection of these two sets so that $ax = bx$.

We have shown thus far that statements (1)-(5) are equivalent.

We now show that if (2) and (3) hold, so does (6). 
So assume that (2) and (3) hold.  Let $H\in\pf(S)$, and let $\epsilon>0$.
Pick $x\in S$ such that for all $a$ and $b$ in $H\cup HH$, $ax=bx$. Pick $a\in H$, and let $K=\{ax\}$.
Then for each $s\in H$, $|K\setminus sK|=0<\epsilon\cdot |K|$. Therefore, $S$ satisfies SFC.

Let $A\subseteq S$. If $d^*(A)=0$, then $d(A)=0$.  If, on the other hand, $d^*(A)>0$, we shall show that $A$ is thick. It will then follow from
\cite[Theorem 3.4]{HSc} that $d(A)=1$ and that (6) holds. Assume that $d^*(A)>0$, and choose $\lambda\in LIM(S)$ for which $\lambda(A)>0$.
Let $F \in \pf(S)$ and $p\in K(\beta S)$. By Lemma \ref{lemsteqt}, for every $s\in S$, $\{t\in S:st=t\}\in p$. So $R=\bigcap_{s\in F}\{t\in S:st=t\}\in p$. Now
$R$ is a right ideal of $S$ and so, by Lemma \ref{lemlambdaright}, $\lambda(R)=1$,
and hence  $\lambda(S\setminus R)=0$.
Since $\lambda(A)>0$, we have $A\cap R\neq \emp$. Pick $t\in A\cap R$.
Then $Ft=\{t\}\subseteq A$ and so $A$ is thick. 

Now we will show that (6) implies (3). 
Recall that a set $A\subseteq S$ such that $\overline A\cap K(\beta S)\neq\emp$ is piecewise syndetic.
By \cite[Corollary 3.5(2)]{HSc}, if $A$ is piecewise syndetic, then $d(A)>0$, so if $A$ is a piecewise syndetic
subset of $S$, then $d(A)=1$.

Let $L$ be a minimal left ideal of $\beta S$ and pick an idempotent $p\in L$.
Then $\beta Sp=L$. We claim that $\beta Sp=\{p\}$ for which it suffices 
that $Sp=\{p\}$. Suppose instead that there is some $s\in S$ such that $sp\neq p$. 
Choose disjoint subsets $P$ and $Q$ of $S$ such that $P\in p$ and $Q\in sp$. Since $s^{-1}Q \cap P\in p$, 
$d(s^{-1}Q \cap P)=1$. It follows from Theorem \ref{thmmax} 
that there exists $\nu\in LIM(S)$ such that $\nu(s^{-1}Q \cap P)=1$. But this implies that
$\nu(Q)=\nu(s^{-1}Q)=1$ and $\nu(P)=1$, which contradicts the assumption that $P\cap Q=\emptyset$. Therefore, (3) holds.

We have shown thus far that statements (1)-(6) are equivalent. Now we will show that each of (1)-(6) implies (7) and (8).

That (5) implies (7) is trivial. To see that (3) implies (8), assume that (3) holds. Since (3) 
implies (6) we have $S$ satisfies SFC so by Lemma \ref{lemKinDelta},
$K(\beta S)\subseteq \Delta(S)$. To see that $\Delta(S)\subseteq K(\beta S)$,
let $p\in \Delta(S)$. We shall show that for all $a\in S$, $ap=p$ so that $\beta Sp=\{p\}$
and thus $\{p\}$ is a minimal left ideal. So let $a\in S$, and pick $q\in K(\beta S)$.
Then $aq\in\beta Sq=\{q\}$ so by Lemma \ref{lemsteqt}, we may pick $s\in S$ such that
$as=s$. Then $sS\subseteq \{x\in\beta S:ax=x\}$. The set $\{x\in\beta S:ax=x\}$ is compact, 
and so it contains $\Delta(S)$ by Lemma \ref{lemcSsubC} so that $ap=p$, as claimed.
 \end{proof}

 We conclude this section by characterizing sets with positive density.

\begin{theorem}\label{thmdirect} Let $(S,\cdot)$ be a semigroup for
which the mimimal left ideals of $\beta S$ are singletons. Define a
relation $\prec$ on $S$ by $s\prec t$ if and only if $st=t$. The
relation $\preceq$ is a directed set order on $S$. Given $A\subseteq S$, the
following statements are equivalent. 
\begin{itemize}
\item[(a)] $d(A)>0$.
\item[(b)] $d(A)=1$.
\item[(c)] $A$ is cofinal in $S$ with respect to $\preceq$.
\end{itemize}\end{theorem}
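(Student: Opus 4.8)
The plan is to first verify that $\preceq$ is a directed set order, then establish the cycle (b)$\Rightarrow$(a)$\Rightarrow$(c)$\Rightarrow$(b). The fact that $\preceq$ is reflexive and transitive should follow from the algebraic structure: if $st=t$ and $tu=u$, then $(st)u = tu$, and one needs $s(tu)=tu$, i.e., $su=u$; this should come from the observation that for any $p\in K(\beta S)$ (which is now a right zero semigroup by Theorem~\ref{impliesSFC}), $sp=p$ for all $s\in S$, so $\{t\in S : st=t\}\supseteq \{t : \text{$t$ satisfies the relevant fixing property}\}$, and more concretely one uses Lemma~\ref{lemsteqt} together with the fact that if $tu=u$ then $u$ "inherits" being fixed by $s$ whenever $t$ is. For directedness: given $s_1, s_2 \in S$, by statement (2) of Theorem~\ref{impliesSFC} pick $x$ with $ax=bx$ for all $a,b$ in a suitable finite set containing $s_1,s_2,s_1 s_2$ and products; then find a common $\preceq$-upper bound. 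Actually the cleanest route: for any $s\in S$ and any $p\in K(\beta S)$, $\{t:st=t\}\in p$, so any two such sets $\{t:s_1t=t\}$ and $\{t:s_2t=t\}$ both lie in $p$, hence meet; choosing $w$ in the intersection and then (using transitivity, once established) noting $w\preceq$ itself and bounds each $s_i$ — here one must be slightly careful, as $s_i \preceq w$ means $s_i w = w$, which is exactly $w\in\{t:s_it=t\}$, so this works directly.

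The implication (b)$\Rightarrow$(a) is trivial. For (a)$\Rightarrow$(c): assume $d(A)>0$. I want to show $A$ is cofinal, i.e., for every $s\in S$ there is $t\in A$ with $s\preceq t$, i.e., $st=t$. More generally I should show that for every $F\in\pf(S)$ there is $t\in A$ with $ft=t$ for all $f\in F$ (this gives cofinality when $|F|=1$ and is what the thickness argument naturally yields). Following the argument already used in the proof that (6) implies (3) in Theorem~\ref{impliesSFC}: since $d(A)>0$, hence $d^*(A)>0$ by Theorem~\ref{d=dstar}, pick $\lambda\in LIM(S)$ with $\lambda(A)>0$. Given $F\in\pf(S)$, pick $p\in K(\beta S)$; by Lemma~\ref{lemsteqt}, $R=\bigcap_{s\in F}\{t\in S:st=t\}\in p$, so $R$ is a nonempty right ideal of $S$, whence $\lambda(R)=1$ by Lemma~\ref{lemlambdaright}. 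Since $\lambda(A)>0$, $A\cap R\neq\emptyset$; any $t\in A\cap R$ satisfies $ft=t$ for all $f\in F$ and $t\in A$. This shows $A$ is cofinal (indeed shows $A$ is thick).

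For (c)$\Rightarrow$(b): assume $A$ is cofinal with respect to $\preceq$. I claim $A$ is thick, and then $d(A)=1$ by Theorem~\ref{thick}(3). Given $F\in\pf(S)$, I must find $x\in S$ with $Fx\subseteq A$. Using statement (2) of Theorem~\ref{impliesSFC}, pick $z\in S$ with $az=bz$ for all $a,b\in H$ where $H = F\cup Fz$ — or more simply, pick $z$ so that $sz = s'z$ for all $s,s'$ in a finite set large enough that $fz$ is independent of $f\in F$; call this common value $w$ (so $Fz=\{w\}$, though one should double-check that a single application gives $fz=f'z$ for all $f,f'\in F$, which it does). Then by cofinality pick $t\in A$ with $w\preceq t$, i.e., $wt=t$. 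Now for $f\in F$, $f(zt) = (fz)t = wt = t \in A$, so taking $x=zt$ gives $Fx = \{t\}\subseteq A$. Hence $A$ is thick and $d(A)=1$. The main obstacle I anticipate is purely bookkeeping: being careful in the directedness proof and in the (c)$\Rightarrow$(b) step to invoke statement (2) of Theorem~\ref{impliesSFC} with a finite set chosen large enough that the relevant products collapse to a single element, and confirming transitivity of $\preceq$ cleanly — but all the substantive machinery (Lemma~\ref{lemsteqt}, Lemma~\ref{lemlambdaright}, Theorem~\ref{thick}, Theorem~\ref{d=dstar}) is already in place.
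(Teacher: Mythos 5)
Your plan is essentially sound and uses the same toolkit as the paper (Lemma \ref{lemsteqt}, Lemma \ref{lemlambdaright}, Theorem \ref{impliesSFC}, thickness), but it routes two of the implications differently. For directedness your ``cleanest route'' is exactly the paper's argument: both sets $\{t:s_it=t\}$ lie in any $p\in K(\beta S)$, hence meet; and transitivity is just the computation $su=s(tu)=(st)u=tu=u$ (your detour through $K(\beta S)$ there is unnecessary). For (a)$\Rightarrow$(c) the paper argues by contrapositive: if $A$ is not cofinal then $\overline A\cap K(\beta S)=\emptyset$, and since $K(\beta S)=\Delta(S)$ by Theorem \ref{impliesSFC}(8), $d(A)=0$; you instead argue directly with a left invariant mean $\lambda$ with $\lambda(A)>0$ and the right ideal $R=\{t:st=t\}$, which is a replay of the thickness argument inside the proof of Theorem \ref{impliesSFC} -- both work, and yours in fact yields thickness (hence (b)) at the same time, so you could shorten the cycle. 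For (c)$\Rightarrow$(b) the paper verifies the definition of $d$ directly with a singleton $K$: using Lemma \ref{lemsteqt} it picks $y$ with $sy=y$ for all $s\in H$, then $x\in A$ with $y\preceq x$, and notes $sK=K$; you instead collapse $F$ to a single element $w=fz$ via Theorem \ref{impliesSFC}(2), go above $w$ by cofinality, and invoke Theorem \ref{thick}(3). Each approach buys little over the other; the paper's is marginally more self-contained since it does not pass through thickness.

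One small slip in your (c)$\Rightarrow$(b): you write ``pick $t\in A$ with $w\preceq t$, i.e., $wt=t$,'' but $w\preceq t$ allows $w=t$, and then $wt=t$ need not hold unless $w$ is idempotent. The fix is one line inside your own setup: if the cofinality witness is $t=w$ itself, then $w\in A$ and already $Fz=\{w\}\subseteq A$, so take $x=z$; otherwise $wt=t$ and $x=zt$ works as you wrote. (Alternatively, imitate the paper: choose $w$ with $fw=w$ for all $f\in F$ via Lemma \ref{lemsteqt}; then both cases of $w\preceq t$ give $ft=t$.) With that patch your proposal is complete.
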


\begin{proof} By $s\preceq t$ we mean, of course, that $s\prec t$ or $s=t$, so
$\preceq$ is reflexive. To verify transitivity, assume $x\preceq y$ and $y\preceq z$.
If either $x=y$ or $y=z$, then trivially $x\preceq z$. So assume that
$x\prec y$ and $y\prec z$; that is $xy=y$ and $yz=z$. Then $xz=xyz=yz=z$. To complete the 
proof that $\preceq$ directs $S$, let $x,y\in S$. Pick $p\in K(\beta S)$. 
By Lemma \ref{lemsteqt}, $\{t\in S:xt=t\}\cap \{t\in S:yt=t\}\in p$ so is nonempty.

It  follows from Theorem \ref{impliesSFC} that (a) and (b) are equivalent.

To see that (c) implies (b), assume that
$A$ is cofinal in $S$. To see that $d(A)=1$, let $H\in\pf(S)$ and $\epsilon>0$ be given.
Using Lemma \ref{lemsteqt}, pick $y\in S$ such that for each $s\in H$, $s\preceq y$.
Pick $x\in A$ such that $y\preceq x$, and let $K=\{x\}$. Then
for all $s\in H$, $K=sK$, so $|K\setminus sK|=0<\epsilon\cdot |K|$ and $|K\cap A|=|K|$.

To see that (a) implies (c), we assume that $A$ is not cofinal and show that 
$d(A)=0$. By Theorem \ref{impliesSFC}, $K(\beta S)=\Delta(S)$, so 
it suffices to show that $\overline A\cap K(\beta S)=\emp$. Suppose 
instead that we have some $p\in\overline A\cap K(\beta S)$. Since
$A$ is not cofinal, we may pick some $s\in S$ such that there is 
no $t\in A$ with $s\preceq t$. But by Lemma \ref{lemsteqt},
$\{t\in S:st=t\}\in p$. Picking $t\in A\cap\{t\in S:st=t\}$
gives a contradiction.
\end{proof}

\begin{corollary}\label{corrightzero} Let $(S,\cdot)$ be a semigroup with
a right zero element, let $Z=\{z \in S : z$ is a right zero in $S\}$, and let
$A\subseteq S$. Then
$$d(A)=\left\{\begin{array}{cl}
1&\hbox{if }A\cap Z\neq\emp\\
0&\hbox{if }A\cap Z=\emp\,.
\end{array}\right.$$
\end{corollary}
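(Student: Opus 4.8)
The plan is to obtain this as an immediate consequence of Theorem~\ref{thmdirect}. First I would check that its hypothesis holds: if $z$ is a right zero of $S$, then for all $a,b\in S$ we have $az=z=bz$, so statement~(1) of Theorem~\ref{impliesSFC} holds (with witness $z$), and therefore statement~(3) holds as well, i.e.\ the minimal left ideals of $\beta S$ are singletons. (Equivalently, $S$ satisfies SFC by Lemma~\ref{lemRTzero}, and one then invokes Theorem~\ref{impliesSFC}.) Hence Theorem~\ref{thmdirect} applies with the order $\preceq$ given by $s\prec t\iff st=t$, yielding $d(A)\in\{0,1\}$ for every $A\subseteq S$, with $d(A)=1$ exactly when $A$ is cofinal in $(S,\preceq)$.

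It then suffices to show that $A$ is cofinal in $(S,\preceq)$ if and only if $A\cap Z\neq\emp$. For the ``if'' direction: if $z\in Z$ then $sz=z$, that is $s\preceq z$, for every $s\in S$, so a single point of $A\cap Z$ witnesses cofinality. For the ``only if'' direction I would fix a right zero $z_0\in S$ (one exists by hypothesis), use cofinality to pick $t\in A$ with $z_0\preceq t$, and observe that $z_0t$ is again a right zero, since $s(z_0t)=(sz_0)t=z_0t$ for every $s\in S$. Since $z_0\preceq t$ forces $t=z_0t$ (in the case $z_0\prec t$) or $t=z_0$ (in the case $z_0=t$), in either case $t\in A\cap Z$.

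Putting these together, $d(A)=1$ when $A\cap Z\neq\emp$ and $d(A)=0$ (the only other possibility) when $A\cap Z=\emp$. I do not expect any genuine obstacle; the one point that needs care is the short computation that a left multiple of a right zero is still a right zero, which drives the ``only if'' half of the cofinality characterization.
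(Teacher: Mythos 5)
Your proposal is correct and follows essentially the same route as the paper: verify that Theorem \ref{thmdirect} applies (the paper notes earlier that semigroups with a right zero satisfy statement (1) of Theorem \ref{impliesSFC}), and then show $A$ is cofinal for $\preceq$ if and only if $A\cap Z\neq\emp$, using the same observation that an element $t$ with $z_0\preceq t$ for a right zero $z_0$ is itself a right zero.
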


\begin{proof} Let $\preceq$ be as in Theorem \ref{thmdirect}. We show that
$A$ is cofinal in $S$ if and only if $A\cap Z\neq\emp$.
If $z\in Z\cap A$, then $\{z\}$ is cofinal in $S$, so $A$ is cofinal in $S$. If 
$A$ is cofinal in $S$, pick $z\in Z$ and pick $y\in A$ such that $z\preceq y$. Then either $y=z\in Z$ or
$zy=y$ so that $y\in Z$.
\end{proof}

\section{Density is determined by an arbitrary F\o lner net}

We saw in Theorem \ref{thmneteq} that if $(S,\cdot)$ satisfies SFC and 
$A\subseteq S$, then there is a F\o lner net $\langle F_\alpha\rangle_{\alpha\in D}$ in $\pf(S)$
such that $\displaystyle d(A)=\lim_{\alpha\in D}\frac{|A\cap F_\alpha|}{|F_\alpha|}$.
We will show in Theorem \ref{theorem_density_by_arbitrary_folner_net}
that in any semigroup satisfying SFC, the 
density $d(A)$ -- and, hence, by Theorem \ref{d=dstar}, the
densities $d^*(A)$ and $d_t(A)$ -- are determined by an arbitrary F\o{}lner net. This improves on
a result of Bergelson and Glasscock \cite[Corollary 3.6]{BG}, who showed that if $(S,\cdot)$ is countable 
and right cancellative, then given any F\o lner sequence $\langle F_n\rangle_{n=1}^\infty$
in $\pf(S)$ and any $A\subseteq S$, $\displaystyle d(A)=\lim_{n\to\infty}\sup_{m\geq n}\max_{s\in S}
\frac{|A\cap F_ms|}{|F_m|}$.

\begin{definition}
    Let $(S,\cdot)$ be a semigroup, $H \in \pf(S)$, and $\epsilon > 0$.  
A set $F \in \pf(S)$ is {\it $(H,\epsilon)$-invariant} if for all $h \in H$, $|F \setminus hF| < \epsilon \cdot|F|$.
\end{definition}

\begin{lemma}
\label{lemma_invariant_dt_equality}
Let $(S,\cdot)$ be a semigroup, and let $A \subseteq S$.  For every $\epsilon > 0$, 
there exists $H \in \pf(S)$ such that for every $(H,\epsilon/2)$-invariant set $F \in \pf(S)$,
$$d_t(A) \leq \max_{s \in S} \frac{|As^{-1} \cap F|}{|F|} < d_t(A) + \epsilon.$$
\end{lemma}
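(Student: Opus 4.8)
The plan is to prove the two inequalities separately. The left inequality, $d_t(A) \le \max_{s\in S}\frac{|As^{-1}\cap F|}{|F|}$, should hold for \emph{every} $F \in \pf(S)$ with no invariance assumption at all: directly from Definition~\ref{defdt}, if $\alpha < d_t(A)$, then applying the defining property of $d_t$ to the particular finite set $F$ produces some $s \in S$ with $|F \cap As^{-1}| \ge \alpha\cdot|F|$; letting $\alpha \uparrow d_t(A)$ gives the claim. So the entire content is in the right inequality, and that is where the $(H,\epsilon/2)$-invariance is used.

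For the right inequality, the idea is to choose $H$ so that translation by elements of $F$ cannot spread mass of $A$ around too badly. Fix $\epsilon > 0$. First I would pick, using Definition~\ref{defdt} applied with $\alpha = d_t(A) + \epsilon/2$ (if $d_t(A) + \epsilon/2 \le 1$; otherwise the bound is trivial since the quotient is always $\le 1$), a set $H \in \pf(S)$ witnessing that $d_t(A) + \epsilon/2$ is \emph{not} below $d_t(A)$, i.e.\ such that for all $s \in S$, $|H \cap As^{-1}| < (d_t(A) + \epsilon/2)\cdot|H|$. Now let $F$ be any $(H,\epsilon/2)$-invariant set and fix $s \in S$; I must bound $|As^{-1} \cap F|/|F|$. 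The key computation is to count pairs: for $h \in H$ and $x \in F$, consider whether $hx \in As^{-1}$, equivalently $x \in (As^{-1})h^{-1} = A(hs)^{-1}$ wait --- more cleanly, $x \in h^{-1}(As^{-1})$. Summing $\cchi_{As^{-1}}(hx)$ over $h\in H$, $x \in F$ in two ways: on one hand it equals $\sum_{x\in F}|\{h\in H : hx \in As^{-1}\}| = \sum_{x \in F}|H \cap As^{-1}x^{-1}| \le \sum_{x\in F}|H \cap A(sx)^{-1}|$ --- here I need $As^{-1}x^{-1} = A(xs)^{-1}$... let me instead note $\{h : hx \in As^{-1}\} = \{h : hxs \in A\} = A(xs)^{-1} \cap H$, so this sum is $< |F|\cdot(d_t(A)+\epsilon/2)\cdot|H|$ by the choice of $H$. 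On the other hand the same double sum equals $\sum_{h\in H}|\{x \in F : hx \in As^{-1}\}| = \sum_{h\in H}|hF \cap As^{-1}|$ (using that $x\mapsto hx$ need not be injective --- hmm, this is the subtle point, see below). Each term $|hF \cap As^{-1}|$ should be at least $|F \cap As^{-1}| - |F \setminus hF| \ge |As^{-1}\cap F| - \tfrac{\epsilon}{2}|F|$ by $(H,\epsilon/2)$-invariance, \emph{provided} $hF \cap As^{-1} \supseteq h(F \cap h^{-1}[As^{-1}])$ counted correctly. Combining, $|H|\cdot(|As^{-1}\cap F| - \tfrac\epsilon2|F|) \le$ double sum $< |H|\cdot|F|\cdot(d_t(A)+\tfrac\epsilon2)$, and dividing by $|H|\cdot|F|$ yields $\frac{|As^{-1}\cap F|}{|F|} < d_t(A) + \epsilon$, as desired.

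The main obstacle I anticipate is the non-injectivity of left multiplication $\lambda_h$ on the finite sets involved: in a general (non-left-cancellative) semigroup, $|hF|$ can be strictly less than $|F|$, so the double-counting bookkeeping in both directions must be done at the level of the subsets of $S$ rather than by naive cardinality counts on $F$. Concretely, the direction that bounds $\sum_{h\in H}|hF\cap As^{-1}|$ from below by $|H|(|As^{-1}\cap F| - \tfrac\epsilon2|F|)$ needs the inclusion $hF \cap As^{-1} \supseteq h(F \cap \lambda_h^{-1}[As^{-1}])$ together with $|h(F\cap \lambda_h^{-1}[As^{-1}])| \ge |F \cap \lambda_h^{-1}[As^{-1}]| - (\text{collisions})$; the clean way around this is to observe $\lambda_h^{-1}[As^{-1}] = (As^{-1})$-preimage $= \{x : hx\in As^{-1}\}$ and that $|F| - |F\setminus hF| = |F\cap hF| \le |hF|$, and to run the counting argument using $|F\cap hF|$ as the safe lower bound, never needing $\lambda_h$ to be injective. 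I would carry out the argument carefully at that level; the rest is routine, and the upper-semicontinuity of $\max_{s\in S}$ as a supremum over $s$ (the max is attained because $F$ is finite, so only finitely many sets $As^{-1}\cap F$ can occur) causes no trouble.
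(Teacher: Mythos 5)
Your proposal is correct and follows essentially the same route as the paper's proof: the same choice of $H$ from the definition of $d_t$ (so that $\max_{s\in S}|As^{-1}\cap H|/|H|<d_t(A)+\epsilon/2$), the same exchange of summation over $H\times F$, and the same use of $(H,\epsilon/2)$-invariance together with the surjection $\lambda_h\colon F\cap h^{-1}[As^{-1}]\to hF\cap As^{-1}$ to handle non-injectivity, which is exactly the paper's step $|h^{-1}As_0^{-1}\cap F|\geq|As_0^{-1}\cap hF|\geq|As_0^{-1}\cap F|-|F\setminus hF|$. The only cosmetic difference is that you bound $|As^{-1}\cap F|/|F|$ for an arbitrary $s$ while the paper fixes the maximizing $s_0$ and averages; the computations are identical.
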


\begin{proof}
Let $\epsilon > 0$.  By the definition of $d_t(A)$, there exists $H \in \pf(S)$ such that
$$\max_{s \in S} \frac{|As^{-1} \cap H|}{|H|} < d_t(A) + \frac{\epsilon}{2}\,.$$
Let $F \in \pf(S)$ be $(H,\epsilon/2)$-invariant, and define
$$v = \max_{s \in S} \frac{|As^{-1} \cap F|}{|F|}.$$
Let $s_0 \in S$ achieve this maximum so that $|A s_0^{-1} \cap F| = v\cdot |F|$.

By the definition of $d_t(A)$, we see that $v \geq d_t(A)$. Therefore, to conclude the
proof of the lemma, we need only show that $v < d_t(A) + \epsilon$.

We claim that for all $h \in H$, $|h^{-1}As_0^{-1} \cap F| > (v - \epsilon/2)\cdot |F|$.
Indeed, let $h \in H$.  Since
$$\big( As_0^{-1} \cap hF \big) \cup \big( F \setminus h F \big) \supseteq As_0^{-1} \cap F\,,$$
we have that $|As_0^{-1} \cap hF| \geq |As_0^{-1} \cap F| - |F \setminus h F|$. 
Since $\lambda_h[h^{-1} As_0^{-1} \cap F] \supseteq As_0^{-1} \cap h F$, 
$$|h^{-1} As_0^{-1} \cap F| \geq |As_0^{-1} \cap h F| 
\geq |As_0^{-1} \cap F| - |F \setminus hF| > \left(v - \frac{\epsilon}{2} \right)\cdot |F|.$$

Now we see that
$$ \begin{array}{rl}
v - \displaystyle \frac{\epsilon}{2} &<  \displaystyle \frac{1}{|H|} \sum_{h \in H} \frac{|h^{-1}As_0^{-1} \cap F|}{|F|}\\
&=  \displaystyle \frac{1}{|H|}\frac{1}{|F|} \sum_{h \in H}  \sum_{f \in F} \cchi_{h^{-1}As_0^{-1}}(f)\\
&= \displaystyle \frac{1}{|F|} \frac{1}{|H|}\sum_{f \in F}  \sum_{h \in H} \cchi_{A(fs_0)^{-1}}(h)\\
&= \displaystyle \frac{1}{|F|} \sum_{f \in F} \frac{|A(fs_0)^{-1} \cap H|}{|H|}.
\end{array}$$
It follows that there exists $f \in F$ such that
$$v- \frac{\epsilon}{2} < \frac{|A(fs_0)^{-1} \cap H|}{|H|} \leq \max_{s \in S} \frac{|As^{-1} \cap H|}{|H|} < d_t(A) + \frac{\epsilon}{2}.$$
This implies that $v < d_t(A) + \epsilon$, as was to be shown.
\end{proof}

\begin{theorem}
\label{theorem_density_by_arbitrary_folner_net}
    Let $(S,\cdot)$ be a semigroup satisfying SFC, and 
let $\langle F_\alpha \rangle_{\alpha \in D}$ be a F\o{}lner net in $\pf(S)$.  For all $A \subseteq S$,
    $$d(A) = d^*(A) = d_t(A) = \lim_{\alpha \in D} \max_{s \in S} \frac{|As^{-1} \cap F_\alpha|}{|F_\alpha|}\,.$$
\end{theorem}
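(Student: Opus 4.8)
The plan is to combine the already-established equality $d(A)=d^*(A)=d_t(A)$ from Theorem \ref{d=dstar} with a squeeze argument showing that the net $\big\langle \max_{s\in S} |As^{-1}\cap F_\alpha|/|F_\alpha|\big\rangle_{\alpha\in D}$ converges to $d_t(A)$. Since the three densities agree, it suffices to show this limit equals $d_t(A)$, and for that I would use Lemma \ref{lemma_invariant_dt_equality}, which is tailor-made: it says that for each $\epsilon>0$ there is a fixed $H\in\pf(S)$ so that every $(H,\epsilon/2)$-invariant $F$ satisfies $d_t(A)\le \max_{s\in S}|As^{-1}\cap F|/|F| < d_t(A)+\epsilon$.

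First I would fix $\epsilon>0$ and apply Lemma \ref{lemma_invariant_dt_equality} to obtain the corresponding $H\in\pf(S)$. Next, because $\langle F_\alpha\rangle_{\alpha\in D}$ is a F\o lner net, for each $h\in H$ we have $|F_\alpha\setminus hF_\alpha|/|F_\alpha|\to 0$, so I can pick $\alpha_0\in D$ such that for all $\alpha\ge\alpha_0$ and all $h\in H$, $|F_\alpha\setminus hF_\alpha| < (\epsilon/2)\cdot|F_\alpha|$; that is, $F_\alpha$ is $(H,\epsilon/2)$-invariant for all $\alpha\ge\alpha_0$. Applying Lemma \ref{lemma_invariant_dt_equality} to each such $F_\alpha$ gives
$$d_t(A)\le \max_{s\in S}\frac{|As^{-1}\cap F_\alpha|}{|F_\alpha|} < d_t(A)+\epsilon \qquad\text{for all }\alpha\ge\alpha_0.$$
Since $\epsilon>0$ was arbitrary, this is exactly the statement that the net converges to $d_t(A)$. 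Combining with Theorem \ref{d=dstar} yields $d(A)=d^*(A)=d_t(A)=\lim_{\alpha\in D}\max_{s\in S}|As^{-1}\cap F_\alpha|/|F_\alpha|$.

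The only subtlety — and it is minor — is ensuring that the maximum $\max_{s\in S}|As^{-1}\cap F_\alpha|/|F_\alpha|$ is actually attained, so that the quantity is well-defined; this holds because $F_\alpha$ is finite, so the values $|As^{-1}\cap F_\alpha|$ range over a finite subset of $\{0,1,\dots,|F_\alpha|\}$. There is essentially no other obstacle: all the real work has been done in Lemma \ref{lemma_invariant_dt_equality} and Theorem \ref{d=dstar}, and this theorem is a clean packaging of those two results together with the definition of a F\o lner net.
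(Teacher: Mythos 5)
Your proposal is correct and follows the same route as the paper: the first two equalities come from Theorem \ref{d=dstar}, and the last comes from applying Lemma \ref{lemma_invariant_dt_equality} once the F\o lner net property (together with finiteness of $H$ and directedness of $D$) guarantees that the $F_\alpha$ are eventually $(H,\epsilon/2)$-invariant. Nothing is missing.
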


\begin{proof}
    The first two equalities follow from Theorem \ref{d=dstar}.  It follows from the definition of a F\o lner net
that for any $H\in\pf(S)$ and any $\epsilon>0$ there exists $\alpha\in D$ such that for all $\sigma\geq\alpha$,
$F_\sigma$ is $(H,\epsilon)$-invariant, so the last equality follows from 
Lemma \ref{lemma_invariant_dt_equality}.
\end{proof}

We extend the result of \cite[Corollary 3.6]{BG} to uncountable semigroups. 

\begin{corollary}\label{corrtcanc} Let $(S,\cdot)$ be a right cancellative semigroup satisfying SFC, and 
let $\langle F_\alpha \rangle_{\alpha \in D}$ be a F\o{}lner net in $\pf(S)$.  For all $A \subseteq S$,
    $$d(A) = d^*(A) = d_t(A) = \lim_{\alpha \in D} \max_{s \in S} \frac{|A \cap F_\alpha s|}{|F_\alpha|}\,.$$
\end{corollary}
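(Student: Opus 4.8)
The plan is to deduce Corollary \ref{corrtcanc} from Theorem \ref{theorem_density_by_arbitrary_folner_net} by translating the ``right shift'' quantity $\frac{|A\cap F_\alpha s|}{|F_\alpha|}$ into the ``preimage shift'' quantity $\frac{|As^{-1}\cap F_\alpha|}{|F_\alpha|}$ that appears in that theorem. The first two equalities and the equality with $\lim_{\alpha\in D}\max_{s\in S}\frac{|As^{-1}\cap F_\alpha|}{|F_\alpha|}$ are already given by Theorem \ref{theorem_density_by_arbitrary_folner_net}, so the only thing to prove is that for right cancellative $S$ one has, for every $F\in\pf(S)$ and every $A\subseteq S$,
$$\max_{s\in S}\frac{|A\cap Fs|}{|F|}=\max_{s\in S}\frac{|As^{-1}\cap F|}{|F|}\,.$$

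The key observation is that for a fixed $s\in S$, right cancellativity makes $\rho_s$ injective, so $|Fs|=|F|$ and the map $f\mapsto fs$ is a bijection from $F$ onto $Fs$. Under this bijection, $f\in As^{-1}\cap F$ if and only if $fs\in A$ and $f\in F$, i.e.\ if and only if $fs\in A\cap Fs$. Hence $\rho_s$ restricts to a bijection between $As^{-1}\cap F$ and $A\cap Fs$, giving $|As^{-1}\cap F|=|A\cap Fs|$ for every single $s\in S$. Taking the maximum over $s$ (and dividing by $|F|$) yields the displayed equality; in fact the two expressions agree term by term, not merely after taking the supremum. Applying this with $F=F_\alpha$ and passing to the limit over $D$, the claimed fourth equality follows immediately from Theorem \ref{theorem_density_by_arbitrary_folner_net}.

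There is essentially no obstacle here: right cancellativity does all the work, turning the preimage-shift into the right-shift bijectively. The only point to be careful about is that the identity $|As^{-1}\cap F|=|A\cap Fs|$ can genuinely fail without right cancellativity (if $\rho_s$ collapses points of $F$, then $|Fs|<|F|$ and $|A\cap Fs|$ can be strictly smaller than $|As^{-1}\cap F|$), so the hypothesis is used in an essential way. Thus the proof is a one-line reduction: cite Theorem \ref{theorem_density_by_arbitrary_folner_net} for the first three equalities, then observe that $\rho_s$ is a bijection of $F_\alpha$ onto $F_\alpha s$ for each $s$, so $|As^{-1}\cap F_\alpha|=|A\cap F_\alpha s|$, and take limits.
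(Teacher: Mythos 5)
Your proposal is correct and is essentially the paper's own argument: the paper's proof is exactly the one-line observation that right cancellativity gives $|A\cap Fs|=|As^{-1}\cap F|$ for all $F\in\pf(S)$ and $s\in S$, after which everything follows from Theorem \ref{theorem_density_by_arbitrary_folner_net}. Your additional remarks on the bijection $\rho_s$ and on why the identity fails without right cancellativity are accurate but just elaborate the same step.
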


\begin{proof} Since $S$ is right cancellative, for any $A\subseteq S$, any $F\in\pf(S)$, and any $s\in S$,
$|A\cap Fs|=|As^{-1}\cap F|$. \end{proof}

In the absence of right cancellation the conclusion of Corollary \ref{corrtcanc} can fail badly.
For example, let $(S,\cdot)$ be an infinite right zero semigroup, and let 
$\emp\neq A\subseteq S$. By Theorem \ref{thmdirect}, $d(A)=1$.   
Any net in $\pf(S)$ is a F\o lner net. If $\langle F_\alpha\rangle_{\alpha\in D}$
is a net in $\pf(S)$ with each $|F_\alpha|=2$, then 
$\displaystyle \lim_{\alpha\in D}\max_{s\in S}
\frac{|A\cap F_\alpha s|}{|F_\alpha|}=\frac{1}{2}$. 
 On the other hand,
$\displaystyle\lim_{\alpha\in D}\max_{s\in S}
\frac{|A\cap F_\alpha s|}{|F_\alpha s|}=1$. So one can
ask what conditions short of right cancellation guarantee that
$d(A)=\displaystyle\lim_{\alpha\in D}\max_{s\in S}
\frac{|A\cap F_\alpha s|}{|F_\alpha s|}$. The only positive answer that
we have is if $d(A)=1$.

\begin{theorem}\label{thmone} Let $(S,\cdot)$ be a semigroup satisfying SFC, and let
$A\subseteq S$. If $d(A)=1$, then for every F\o lner net $\langle F_\alpha\rangle_{\alpha\in D}$
in $\pf(S)$, $$d(A)=\lim_{\alpha\in D}\max_{s \in S}\frac{|A\cap F_\alpha s|}{|F_\alpha s|}\,.$$
\end{theorem}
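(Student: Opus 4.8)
The plan is to reduce everything to the characterization of density-one sets as thick sets. Since $S$ satisfies SFC and $d(A)=1$, Theorem \ref{thick}(3) tells us that $A$ is thick; that is, for every $F\in\pf(S)$ there is some $x\in S$ with $Fx\subseteq A$. This is the only property of $A$ that will be used, and in particular the F\o lner property of the net will play no role.

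Next I would fix an arbitrary F\o lner net $\langle F_\alpha\rangle_{\alpha\in D}$ in $\pf(S)$ and fix an index $\alpha\in D$. Applying thickness to the finite set $F_\alpha$, pick $x_\alpha\in S$ with $F_\alpha x_\alpha\subseteq A$. Then $A\cap F_\alpha x_\alpha=F_\alpha x_\alpha$, so $\frac{|A\cap F_\alpha x_\alpha|}{|F_\alpha x_\alpha|}=1$. On the other hand, for every $s\in S$ we trivially have $A\cap F_\alpha s\subseteq F_\alpha s$, so $\frac{|A\cap F_\alpha s|}{|F_\alpha s|}\leq 1$. Hence $\max_{s\in S}\frac{|A\cap F_\alpha s|}{|F_\alpha s|}=1$, the maximum being attained at $s=x_\alpha$.

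Since this holds for every $\alpha\in D$, the net $\big\langle \max_{s\in S}\frac{|A\cap F_\alpha s|}{|F_\alpha s|}\big\rangle_{\alpha\in D}$ is identically equal to $1$, so it converges to $1=d(A)$, as required. I do not anticipate any real obstacle: the only point requiring (trivial) care is that the supremum over $s\in S$ is genuinely attained, which is witnessed by the element $x_\alpha$ supplied by thickness; everything else is immediate from Theorem \ref{thick}(3).
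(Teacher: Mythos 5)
Your proposal is correct and is essentially the paper's own argument: both deduce from $d(A)=1$ via Theorem \ref{thick} that $A$ is thick, and then for each $\alpha$ use a witness $x_\alpha$ with $F_\alpha x_\alpha\subseteq A$ to see that the maximum ratio equals $1$, so the net is constantly $1$ and converges to $d(A)$.
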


\begin{proof} Assume that $d(A)=1$, and let $\langle F_\alpha\rangle_{\alpha\in D}$ be a F\o lner net in $\pf(S)$. 
Then by Theorem \ref{thick}, $A$ is thick.
Given $\alpha\in D$, pick $s\in S$ such that $F_\alpha s\subseteq A$. 
Then $\displaystyle\frac{|A\cap F_\alpha s|}{|F_\alpha s|}=1$.
\end{proof}

We see now that we cannot add the case that $d(A)=0$ to the 
statement of Theorem \ref{thmone}.
In this theorem, we deal with the semigroup $(\pf(\ben),\cup)$. In a 
semigroup $(S,\cdot)$, if $A\subseteq S$ and $x\in S$, we write
$A\cdot x$ for $\{y\cdot x:y\in A\}$. If ${\mathcal A}\subseteq\pf(\ben)$
and $M\in\pf(\ben)$, then ${\mathcal A}\cup M$ already means something,
so we write out what we intend, i.e. $\{A\cup M:A\in{\mathcal A}\}$.  Note that
because the semigroup $(\pf(\ben),\cup)$ is commutative, it satisfies SFC.

\begin{theorem}\label{thmNotzero} In the semigroup $(\pf(\ben),\cup)$, 
let ${\mathcal A}=\{X \in \pf(\ben):1 \notin X\}$.
Then $d({\mathcal A})=0$, but there is a F\o lner sequence 
$\langle{\mathcal F}_n\rangle_{n\in\ben}$ in $\pf\big(\pf(\ben)\big)$ such
that $$\displaystyle\lim_{n\in\ben}\max_{T \in \pf(\ben)}
\frac{|{\mathcal A}\cap\{Z \cup T:Z \in {\mathcal F}_n\}|}{|\{Z \cup T:Z \in {\mathcal F}_n\}|}=\frac{1}{2}\,.$$
\end{theorem}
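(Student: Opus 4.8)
The plan is to exhibit an explicit Følner sequence. The equality $d(\mathcal A)=0$ comes from the fact that $(\pf(\ben),\cup)$ is a semilattice, so its minimal left ideals are singletons (Theorem~\ref{impliesSFC}) and Theorem~\ref{thmdirect} applies with $X\prec Y\iff X\cup Y=Y\iff X\subsetneq Y$: thus $d(\mathcal A)\in\{0,1\}$, and $d(\mathcal A)=1$ iff $\mathcal A$ is cofinal under $\subseteq$. Since $\{1\}$ has no superset lying in $\mathcal A$, the family $\mathcal A$ is not cofinal, so $d(\mathcal A)=0$. (Alternatively one may argue straight from Definition~\ref{defdensity}: taking $s=\{1\}$, for every $K\in\pf(\pf(\ben))$ one checks that $K\setminus sK=\{Z\in K:1\notin Z\}=K\cap\mathcal A$, so the requirements $|K\setminus sK|<\epsilon|K|$ and $|K\cap\mathcal A|\ge\alpha|K|$ are incompatible once $0<\epsilon\le\alpha$, which forces $d(\mathcal A)=0$.)

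For the second assertion I would take
$$\mathcal F_n=\{Z\in\pf(\ben):\{1,2,\ldots,n\}\subseteq Z\subseteq\{1,2,\ldots,2n\}\}\cup\bigl\{\{2\}\bigr\},$$
so that $\mathcal F_n\in\pf(\pf(\ben))$ with $|\mathcal F_n|=2^n+1$: exactly one member, namely $\{2\}$, omits the element $1$, and every other member contains $\{1,\ldots,n\}$. To see that $\langle\mathcal F_n\rangle_{n\in\ben}$ is a Følner net, fix $M\in\pf(\ben)$; once $n\ge\max M$, every member of $\mathcal F_n$ other than $\{2\}$ contains $M$, hence $|\mathcal F_n\setminus M\mathcal F_n|\le 1$, which is $o(|\mathcal F_n|)$.

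It then remains to compute, for each $n$ and each $T\in\pf(\ben)$, the quantity $|\mathcal A\cap\{Z\cup T:Z\in\mathcal F_n\}|/|\{Z\cup T:Z\in\mathcal F_n\}|$. If $1\in T$ it equals $0$, since every $Z\cup T$ then contains $1$. If $1\notin T$, then among the sets $Z\cup T$ ($Z\in\mathcal F_n$) the only one avoiding $1$ is $\{2\}\cup T$, so the numerator is $1$, while the denominator is $1+\bigl|\{Z\cup T:\{1,\ldots,n\}\subseteq Z\subseteq\{1,\ldots,2n\}\}\bigr|\ge 2$; hence the ratio is at most $\frac{1}{2}$. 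Equality holds for $T=\{n+1,\ldots,2n\}$, which collapses $\{Z\cup T:\{1,\ldots,n\}\subseteq Z\subseteq\{1,\ldots,2n\}\}$ onto the single set $\{1,\ldots,2n\}$. Therefore $\max_{T\in\pf(\ben)}$ of the ratio equals $\frac{1}{2}$ for every $n$, and the limit is $\frac{1}{2}$. The point needing care is the tension between the Følner condition---which forces all but a vanishing fraction of the members of $\mathcal F_n$ to contain $1$---and the requirement that some translate be half inside $\mathcal A$; it is resolved by letting a single exceptional member carry the whole of $\mathcal A\cap\{Z\cup T:Z\in\mathcal F_n\}$ while $T$ crushes the rest of $\mathcal F_n$ onto one set.
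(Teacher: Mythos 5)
Your proof is correct and follows essentially the same strategy as the paper: $d(\mathcal A)=0$ via the singleton-minimal-left-ideal/cofinality criterion (Theorems \ref{impliesSFC} and \ref{thmdirect}), and a F\o lner sequence with exactly one member omitting $1$, where a suitable translate $T$ collapses all the remaining members onto a single set containing $1$, forcing the maximal ratio to be exactly $\frac12$. The only difference is the explicit family (your $\mathcal F_n$ of size $2^n+1$ versus the paper's family of size $n+1$ with exceptional member $\{2,\ldots,2n\}$), which is immaterial.
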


\begin{proof} By Theorem \ref{impliesSFC}, the minimal left ideals of $\beta\pf(S)$ are singletons.
Define $\prec$ on $\pf(S)$ by $A\prec B$ if and only if $A\subseteq B$. Since ${\mathcal A}$ is 
not cofinal in $\pf(S)$, by Theorem \ref{thmdirect}, $d({\mathcal A})=0$.

For $n\in\ben$, let ${\mathcal F}_n=\big\{\{2,3,\ldots,2n\}\big\}\cup\big\{\nhat{k}:n<k\leq 2n\big\}$.
Note that $|{\mathcal F}_n|=n+1$. We claim that 
$\langle{\mathcal F}_n\rangle_{n=1}^\infty$ is a F\o lner sequence. 

Let $X\in\pf(\ben)$, and let $\epsilon>0$ be given.  Pick $n\in\ben$ such that $n>\max X$
and $\frac{1}{n+1}<\epsilon$. Let $m\geq n$. 
Then ${\mathcal F}_m\setminus\{X\cup Z:Z\in{\mathcal F}_m\}\subseteq \big\{\{2,3,\ldots 2m\}\big\}$
so $|{\mathcal F}_m\setminus\{X\cup Z:Z\in{\mathcal F}_m\}|\leq 1 <\epsilon\cdot(n+1)\leq \epsilon\cdot(m+1)
=\epsilon\cdot|{\mathcal F}_m|$.

Now let $n\in\ben$. We shall show that 
$\displaystyle\max_{T \in \pf(\ben)}
\frac{|{\mathcal A}\cap\{Z \cup T:Z \in {\mathcal F}_n\}|}{|\{Z \cup T:Z \in {\mathcal F}_n\}|}=\frac{1}{2}\,.$
So let $T\in\pf(\ben)$. If $1\in T$, then
${\mathcal A}\cap\{Z \cup T:Z \in {\mathcal F}_n\}=\emp$, so assume that
$1\notin T$. Then $\{2,3,\ldots,2n\}\cup T\in{\mathcal A}$ and for $n<k\leq 2n$, 
$\{1,2,\ldots,2n\}\cup T\notin {\mathcal A}$ so $|{\mathcal A}\cap\{Z \cup T:Z \in {\mathcal F}_n\}|=1$.
Also, $\{2,3,\ldots,2n\}\cup T\neq \{1,2,\ldots,2n\}\cup T$ so 
$\displaystyle\frac{|{\mathcal A}\cap\{Z \cup T:Z \in {\mathcal F}_n\}|}{|\{Z \cup T:Z \in {\mathcal F}_n\}|}\leq \frac{1}{2}$.
And if $T=\{2,3,\ldots,2n\}$, then 
$\displaystyle\frac{|{\mathcal A}\cap\{Z \cup T:Z \in {\mathcal F}_n\}|}{|\{Z \cup T:Z \in 
{\mathcal F}_n\}|}=\frac{1}{2}$.
\end{proof}

\section{Density in Product Spaces}

It has been known for some time that the product of two left amenable semigroups is left amenable. This follows from a more powerful theorem, due to Maria Klawe, 
about the semidirect product of two left amenable semigroups \cite[Proposition 3.4]{K}. We prove this directly in Theorem \ref{prodleftamen}, then establish in Theorem \ref{dstarAtimesB} a product property that generalizes \cite[Theorem 3.4]{HSa}.

\begin{theorem}\label{prodleftamen} Let $(S,\cdot)$ and $(T,\cdot)$ be left amenable semigroups, and let $\mu\in LIM(S)$ and $\nu\in LIM(T)$.
Then there exists $\rho\in LIM(S\times T)$ with the property that, for every $A\subseteq S$ and every $B\subseteq T$,
$\rho(A\times B)=\mu(A)\nu(B)$. \end{theorem}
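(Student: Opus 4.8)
The plan is to build $\rho$ by a Fubini-type iterated application of $\nu$ and then $\mu$, first as a finitely additive set function on $\mathcal P(S\times T)$, and then to promote it to a mean using Lemma \ref{content}.

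For $C\subseteq S\times T$ and $s\in S$, write $C_s=\{t\in T:(s,t)\in C\}$ for the $s$-section of $C$. Define $\rho_0:\mathcal P(S\times T)\to\ber$ by $\rho_0(C)=\mu\big(s\mapsto\nu(C_s)\big)$; since the inner assignment $s\mapsto\nu(C_s)$ is a function from $S$ into $[0,1]$, it lies in $l_\infty(S)$, so $\rho_0$ is well defined. Non-negativity of $\mu$ and $\nu$ gives $\rho_0\geq 0$. Since $(C\cup D)_s=C_s\cup D_s$ and the sections of disjoint sets are disjoint, finite additivity of $\nu$ on sets yields $\nu\big((C\cup D)_s\big)=\nu(C_s)+\nu(D_s)$ whenever $C\cap D=\emp$; applying the linear functional $\mu$ then gives finite additivity of $\rho_0$. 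Finally $(S\times T)_s=T$ for every $s$, so $\rho_0(S\times T)=\mu(s\mapsto\nu(T))=\mu(\cchi_S)=1$. By Lemma \ref{content}, $\rho_0$ extends uniquely to a mean $\rho$ on $S\times T$.

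Next I would verify left invariance via Lemma \ref{s^{-1}A}: it suffices to check that $\rho\big((s_0,t_0)^{-1}C\big)=\rho(C)$ for every $(s_0,t_0)\in S\times T$ and $C\subseteq S\times T$. The key computation is that the $s$-section of $(s_0,t_0)^{-1}C=\{(s,t):(s_0s,t_0t)\in C\}$ equals $t_0^{-1}\big(C_{s_0s}\big)$. Hence, by left invariance of $\nu$ in the form of Lemma \ref{s^{-1}A}, the inner function is $s\mapsto\nu\big(t_0^{-1}(C_{s_0s})\big)=s\mapsto\nu\big(C_{s_0s}\big)=\big(s\mapsto\nu(C_s)\big)\circ\lambda_{s_0}$, and then left invariance of $\mu$ (again by Lemma \ref{s^{-1}A}) gives $\rho\big((s_0,t_0)^{-1}C\big)=\mu\big((s\mapsto\nu(C_s))\circ\lambda_{s_0}\big)=\mu\big(s\mapsto\nu(C_s)\big)=\rho(C)$. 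Thus $\rho\in LIM(S\times T)$. It remains to evaluate $\rho$ on rectangles: for $A\subseteq S$ and $B\subseteq T$, the $s$-section of $A\times B$ is $B$ if $s\in A$ and $\emp$ otherwise, so $\nu\big((A\times B)_s\big)=\nu(B)\cdot\cchi_A(s)$, and applying $\mu$ gives $\rho(A\times B)=\nu(B)\,\mu(\cchi_A)=\mu(A)\,\nu(B)$, as required.

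There is no serious obstacle here; the construction is essentially routine, and the only points needing care are the verification that $s\mapsto\nu(C_s)$ genuinely lies in $l_\infty(S)$ (so that $\mu$ may be applied) and the correct identification of the section $\big((s_0,t_0)^{-1}C\big)_s=t_0^{-1}(C_{s_0s})$, which is precisely what allows the two left-invariance hypotheses to be applied in turn. One could also bypass Lemma \ref{content} and define $\rho$ directly on $l_\infty(S\times T)$ by $\rho(f)=\mu\big(s\mapsto\nu(t\mapsto f(s,t))\big)$, checking linearity, positivity, $\rho(\cchi_{S\times T})=1$, and invariance by the same two-step argument; I would note this streamlined alternative.
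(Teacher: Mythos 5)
Your proof is correct, and at its core it is the same Fubini-type construction the paper uses: iterate $\nu$ over sections and then apply $\mu$. The only real difference is in the packaging. The paper defines the functional directly on $l_\infty(S\times T)$ by $\tau(f)=\mu\big(\langle\nu(f_s)\rangle_{s\in S}\big)$, checks that it is a positive linear functional with $\tau(\cchi_{S\times T})=1$ (so a mean by Lemma \ref{inMNS}), and verifies left invariance from the identity $(f\circ\lambda_{(a,b)})_s=f_{as}\circ\lambda_b$ --- this is precisely the ``streamlined alternative'' you mention at the end. You instead define the set function $\rho_0(C)=\mu\big(s\mapsto\nu(C_s)\big)$, promote it to a mean via Lemma \ref{content}, and get left invariance from the section identity $\big((s_0,t_0)^{-1}C\big)_s=t_0^{-1}(C_{s_0 s})$ together with Lemma \ref{s^{-1}A}; by uniqueness in Lemma \ref{content} your $\rho$ coincides with the paper's $\tau$. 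Both routes are sound; the paper's is slightly shorter because it never needs the extension machinery, while yours only ever manipulates sets, which fits the way density is actually used. One small misattribution: the step $\mu\big(g\circ\lambda_{s_0}\big)=\mu(g)$ for the bounded function $g(s)=\nu(C_s)$ is just the definition of a left invariant mean, not an application of Lemma \ref{s^{-1}A} (which concerns characteristic functions); this does not affect correctness.
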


\begin{proof} Let $f\in l_{\infty}(S\times T)$. For each $s\in S$, 
we define $f_s\in l_{\infty}(T)$ by $f_s(t)=f(s,t)$.
Then $\langle\nu(f_s)\rangle_{s\in S}\in l_{\infty}(S)$.
Put $\tau(f)=\mu(\langle\nu(f_s)\rangle_{s\in S})$. For $A\subseteq S$ and $B\subseteq T$,
we claim that $\tau(A\times B)=\mu(A)\nu(B)$. That is,
$\tau(\cchi_{A\times B})=\mu(\cchi_A)\nu(\cchi_B)$. To see this
let $f=\cchi_{A\times B}$ and let $g=\langle\nu(f_s)\rangle_{s\in S}$.
For $s\in S$ and $t\in T$, $f_s(t)=\cchi_B(t)$ if $s\in A$ and 
$f_s(t)=0$ if $s\notin A$. So for $s\in S$, $g(s)=\nu(f_s)=\nu(\cchi_B)\cchi_A(s)$
so $g=\nu(\cchi_B)\cchi_A$. Thus $\tau(\cchi_{A\times B})=\mu(g)=\nu(\cchi_B)\mu(\cchi_A)$.

We claim that $\tau$ is a left invariant on $l_{\infty}(S \times T)$. 
It is clear that $\tau$ is a positive linear functional on $l_{\infty}(S\times T)$.
Since $\tau(\cchi_{S\times T})=1$, $\tau$ is a mean by Lemma \ref{inMNS}.
To see that $\tau$ is left invariant, let $f\in l_{\infty}(S\times T)$, and let $(a,b) \in S\times T$.
In the notation from the previous paragraph, $(f \circ \lambda_{(a,b)})_s = f_{as}\circ \lambda_b$.
Therefore, by the left invariance of $\mu$ and $\nu$,
$$ \begin{array}{rl} \tau(f \circ \lambda_{(a,b)}) &= \mu(\langle\nu((f \circ \lambda_{(a,b)})_s)\rangle_{s\in S}) \\
&= \mu(\langle\nu(f_{as} \circ \lambda_b)\rangle_{s\in S}) \\
&= \mu(\langle\nu(f_{as})\rangle_{s\in S})\\
&= \mu(\langle\nu(f_{s})\rangle_{s\in S}\circ\lambda_a)\\
&= \mu(\langle\nu(f_{s})\rangle_{s\in S})\\
&= \tau(f),\\
\end{array} $$
demonstrating the left invariance of $\tau$.
\end{proof}

\begin{theorem}\label{dstarAtimesB} Let $(S,\cdot)$ and $(T,\cdot)$ be 
left amenable semigroups, let $A\subseteq S$, and let $B\subseteq T$. 
Then $d^*(A\times B)=d^*(A)d^*(B)$.\end{theorem}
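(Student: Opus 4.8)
The plan is to prove the two inequalities $d^*(A\times B)\ge d^*(A)\,d^*(B)$ and $d^*(A\times B)\le d^*(A)\,d^*(B)$ separately. For the first I would use Theorem \ref{prodleftamen}: since $LIM(S)$ and $LIM(T)$ are weak*-closed subsets of the Alaoglu ball, they are weak* compact, and as $\lambda\mapsto\lambda(A)$ and $\lambda\mapsto\lambda(B)$ are weak* continuous I can choose $\mu\in LIM(S)$ with $\mu(A)=d^*(A)$ and $\nu\in LIM(T)$ with $\nu(B)=d^*(B)$. Theorem \ref{prodleftamen} then produces $\rho\in LIM(S\times T)$ with $\rho(A\times B)=\mu(A)\nu(B)=d^*(A)d^*(B)$, so $d^*(A\times B)\ge d^*(A)d^*(B)$.

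For the reverse inequality I must show $\rho(A\times B)\le d^*(A)d^*(B)$ for every $\rho\in LIM(S\times T)$, and the first step is to reduce to the case in which $S$ is a monoid. If $S$ has no identity, adjoin one, forming $S^1=S\cup\{e\}$; then $S^1$ is left amenable (each member of $LIM(S)$ extends to $S^1$ by restriction to $l_\infty(S)$), $S$ is a two-sided ideal of $S^1$, and $S\times T$ is a two-sided ideal of $S^1\times T$, which is left amenable by Theorem \ref{prodleftamen}. By Lemma \ref{lemlambdaright} every member of $LIM(S^1)$ gives $S$ measure $1$ and every member of $LIM(S^1\times T)$ gives $S\times T$ measure $1$, so restricting functionals to $l_\infty$ of the ideal sets up bijections $LIM(S^1)\to LIM(S)$ and $LIM(S^1\times T)\to LIM(S\times T)$ that preserve the value taken on the characteristic function of any subset of the ideal. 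Hence the Banach density of $A$ computed in $S^1$ equals that in $S$, and the Banach density of $A\times B$ computed in $S^1\times T$ equals that in $S\times T$, so it suffices to prove the upper bound with $S$ a monoid.

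Now assume $S$ is a monoid with identity $e$, and let $\rho\in LIM(S\times T)$. The projection $\pi_S\colon S\times T\to S$ is a surjective homomorphism, so by Lemma \ref{lemnusubh} the pushforward $\mu:=\rho_{\pi_S}$ lies in $LIM(S)$ and satisfies $\mu(A)=\rho(A\times T)$. If $\mu(A)=0$ then $\rho(A\times B)\le\rho(A\times T)=0$ and we are done, so assume $\mu(A)>0$. Define $\nu^A\in l_\infty(T)^*$ to be $\rho$ evaluated at the function $(s,t)\mapsto\cchi_A(s)\,g(t)$, i.e. $\nu^A(g)=\rho\big((\cchi_A\circ\pi_S)\cdot(g\circ\pi_T)\big)$; it is nonnegative and linear with $\nu^A(\cchi_T)=\rho(A\times T)=\mu(A)$, so $\nu:=\nu^A/\mu(A)$ is a mean on $T$ with $\nu(B)=\rho(A\times B)/\mu(A)$. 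The crucial observation is that for each $b\in T$ the function $(s,t)\mapsto\cchi_A(s)\,g(bt)$ is exactly $\big[(s,t)\mapsto\cchi_A(s)\,g(t)\big]\circ\lambda_{(e,b)}$ — this is where the identity $e$ of $S$ is used — so the left invariance of $\rho$ gives $\nu^A(g\circ\lambda_b)=\nu^A(g)$, i.e. $\nu\in LIM(T)$. Consequently $\rho(A\times B)=\mu(A)\,\nu(B)\le d^*(A)\,d^*(B)$, since $\mu(A)\le d^*(A)$ and $\nu(B)\le d^*(B)$, and taking the supremum over $\rho$ finishes the proof.

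I expect the reverse inequality to be the main obstacle. The marginals $\rho_{\pi_S}$ and $\rho_{\pi_T}$ only yield $\rho(A\times B)\le\min\{d^*(A),d^*(B)\}$, which is far too weak; one must instead restrict $\rho$ to the ``slab'' $A\times T$ and renormalize to obtain a mean $\nu$ on $T$, and the only subtle point is the left invariance of $\nu$. That invariance requires a translation of $S\times T$ that fixes the $S$-coordinate and moves the $T$-coordinate arbitrarily, which is available precisely when $S$ has an identity — hence the preliminary reduction from $S$ to $S^1$, using that $S$ is an ideal of $S^1$ and that left invariant means assign right ideals full measure. The lower bound and the monoid case of the upper bound are both short.
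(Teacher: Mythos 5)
Your lower bound and your monoid-case argument are both fine, but there is a gap at the pivot of your reduction to $S^1$. To transfer the upper bound you need the inequality $d^*_{S\times T}(A\times B)\le d^*_{S^1\times T}(A\times B)$, i.e.\ you must take an arbitrary $\rho\in LIM(S\times T)$ and produce $\rho'\in LIM(S^1\times T)$ with $\rho'(A\times B)=\rho(A\times B)$. The justification you give --- Lemma \ref{lemlambdaright} plus ``restriction sets up bijections'' --- only yields the half you do not need: since $S\times T$ is a right ideal of $S^1\times T$, every member of $LIM(S^1\times T)$ assigns it measure $1$, so restriction is a well-defined, injective, value-preserving map $LIM(S^1\times T)\to LIM(S\times T)$, giving $d^*_{S^1\times T}(A\times B)\le d^*_{S\times T}(A\times B)$. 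Surjectivity of that restriction map is exactly the statement that the canonical extension $\rho'(f)=\rho(f|_{S\times T})$ is left invariant under $\lambda_{(e,u)}$ for $u\in T$, i.e.\ that $\rho$ is invariant under the partial translation $(s,t)\mapsto(s,ut)$ --- which is precisely the kind of invariance the whole theorem turns on, and it does not follow from the measure-$1$ observation. (For a single factor the extension $LIM(S)\to LIM(S^1)$ is easy, but that is not the map your argument needs.)

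The gap is fixable, and the fix shows the detour through $S^1$ is unnecessary: for $\rho\in LIM(S\times T)$, $u\in T$, $f\in l_\infty(S\times T)$, and $m_u(s,t)=(s,ut)$, one has $(f\circ m_u)\circ\lambda_{(a,v)}=f\circ\lambda_{(a,uv)}$ for every $(a,v)\in S\times T$, hence $\rho(f\circ m_u)=\rho\big((f\circ m_u)\circ\lambda_{(a,v)}\big)=\rho\big(f\circ\lambda_{(a,uv)}\big)=\rho(f)$. Applied to $F=(\cchi_A\circ\pi_S)\cdot(g\circ\pi_T)$ this gives the left invariance of your slab functional $\nu^A$ directly on $S\times T$, with no identity adjoined; it also supplies the missing invariance of $\rho'$ if you prefer to keep the reduction. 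The paper handles the same difficulty at the level of sets rather than functions: it puts $\mu(X)=\rho(X\times T)$ and $\nu(Y)=\rho(A\times Y)/\rho(A\times T)$, extends these finitely additive set functions to means via Lemma \ref{content}, and checks left invariance of $\nu$ through Lemma \ref{s^{-1}A} by using that $(s,t)^{-1}(A\times t^{-1}Y)$ and $(s,t^2)^{-1}(A\times Y)$ coincide --- the same device of absorbing the partial translation into a genuine one, with $t^2$ playing the role you intended for the identity $e$. Your lower-bound argument matches the paper's use of Theorem \ref{prodleftamen}; the compactness/attainment step there is correct, though an $\epsilon$-approximation would let you avoid even that.
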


\begin{proof} It is a consequence of Theorem \ref{prodleftamen}
that $d^*(A\times B)\geq d^*(A)d^*(B)$, so suppose that 
$d^*(A)d^*(B)<d^*(A\times B)$ and pick $\eta$ such that
$d^*(A)d^*(B)<\eta <d^*(A\times B)$. Pick $\rho\in LIM(S\times T)$
such that $\rho(A\times B)>\eta$. Note in particular that 
$\rho(A\times T)\geq \rho(A\times B)>0$.

We define $\mu$ and $\nu$ mapping ${\mathcal P}(S)$ and ${\mathcal P}(T)$, 
respectively, to $\ber$ by first putting $\mu(X)=\rho(X\times T)$ for every $X\subseteq S$, 
and $\nu(Y)=\displaystyle\frac{\rho(A\times Y)}{\rho(A\times T)}$ for every
$Y\subseteq T$. These functions  are finitely additive on 
${\mathcal P}(S)$ and ${\mathcal P}(T)$ respectively, and 
$\mu(S)=\nu(T)=1$. By Lemma \ref{content}, they extend uniquely to means on $S$ and $T$, respectively.

We claim that these means are left invariant. 
To see this, observe that, for every $s\in S$, $t\in T$, $X\subseteq S$, and $Y\subseteq T$,
$$\mu(s^{-1}X)=\rho(s^{-1}X\times T)=\rho((s,t)^{-1}(X\times T))=\rho(X\times T)=
\mu(X)$$ and
$$\rho(A\cap t^{-1}Y)=\rho\big((s,t)^{-1}(A\cap t^{-1}Y)\big)
= \rho\big((s,t^2)^{-1}(A\cap Y)\big)=\rho(A\cap Y),$$
whereby $\displaystyle\nu(t^{-1}Y)=\frac{\rho(A\times t^{-1}Y)}{\rho(A\times T)}= 
\frac{\rho(A\times Y)}{\rho(A\times T)}=\nu(Y)$.
So, by Lemma \ref{s^{-1}A}, $\mu$ and $\nu$ are left invariant means.

Then $d^*(A)d^*(B)\geq \mu(A)\nu(B)=\rho(A\times B)>\eta$, a contradiction.
\end{proof}

\bibliographystyle{plain}

\end{document}